\documentclass[12pt]{amsart}
\usepackage{amsmath,amssymb,latexsym,cancel,rotating}
\usepackage{graphicx,amssymb,mathrsfs,amsmath,color,fancyhdr,amsthm}
\usepackage[all]{xy}

\textwidth15.1cm \textheight21cm \headheight12pt
\oddsidemargin.4cm \evensidemargin.4cm \topmargin0.5cm

\addtolength{\marginparwidth}{-13mm}

\newtheorem{theorem}{Theorem}[section]
\newtheorem{lemma}[theorem]{Lemma}
\newtheorem{corollary}[theorem]{Corollary}

\newtheorem{proposition}[theorem]{Proposition}

\newtheorem{thm}{Theorem}

 \def\cD{{\mathcal D}}    \def\cL{{\mathcal L}}

\def\bbN{{\mathbb N}}  \def\bbZ{{\mathbb Z}}  \def\bbQ{{\mathbb Q}}

\def\bbK{{\mathbb K}}

  \def\leq{\leqslant}  \def\geq{\geqslant}

\def\dim{\mbox{\rm dim}\,}


\def\bfV{{\mathbf V}}

\begin{document}

\title[Geometric realizations of Lusztig's symmetries]
{Geometric realizations of Lusztig's symmetries on the whole quantum groups}\thanks{This work was supported by the National Natural Science Foundation of China [No. 11526037]}

\author[Zhao]{Minghui Zhao}
\address{College of Science, Beijing Forestry University, Beijing 100083, P. R. China}
\email{zhaomh@bjfu.edu.cn (M.Zhao)}

\subjclass[2000]{16G20, 17B37}

\date{\today}

\keywords{Lusztig's symmetries; Drinfeld double; Geometric realizations}

\bibliographystyle{abbrv}

\maketitle

\begin{abstract}
In this paper, we shall study the structure of the Grothendieck group of the category consisting of Lusztig's perverse sheaves and give a decomposition theorem of it. By using this decomposition theorem and the geometric realizations of Lusztig's symmetries on the positive part of a quantum group, we shall give geometric realizations of Lusztig's symmetries on the whole quantum group.
\end{abstract}

\setcounter{tocdepth}{1}\tableofcontents

\section{Introduction}

\subsection{}

Let $\mathbf{U}$ be the quantum group associated to a Cartan datum, which is introduced by Drinfeld (\cite{drinfel1990hopf}) and Jimbo (\cite{jimbo1985aq}) respectively in the study of quantum Yang-Baxter equations. As a quantization of the universal enveloping algebra of a Kac-Moody Lie algebra, the quantum group $\mathbf{U}$ is a Hopf algebra and has the following
triangular decomposition
$$\mathbf{U}\cong {\mathbf{U}^-}\otimes{\mathbf{U}^{0}}\otimes{\mathbf{U}^{+}}.$$
In \cite{Lusztig_Introduction_to_quantum_groups}, Lusztig also introduced an algebra $\mathbf{f}$ (called the Lusztig's algebra) associated to a Cartan datum, satisfying that there are two
monomorphisms of algebras ${^+}:\mathbf{f}\rightarrow\mathbf{U}$ and ${^-}:\mathbf{f}\rightarrow\mathbf{U}$ with images $\mathbf{U}^+$ and $\mathbf{U}^-$ respectively.

Lusztig introduced some operators $T_i$ on the quantum group $\mathbf{U}$ satisfying the braid group relations, which are called Lusztig's symmetries
(\cite{Lusztig_Quantum_deformations_of_certain_simple_modules_over_enveloping_algebras,Lusztig_Quantum_groups_at_roots_of_1}). By the definition of Lusztig's symmetries, the image of $\mathbf{U}^{+}$ under $T_i$ is not contained in $\mathbf{U}^{+}$ for any $i$. So Lusztig introduced two subalgebras ${_i\mathbf{f}}=\{x\in\mathbf{f}\,\,|\,\,T_i(x^+)\in\mathbf{U}^+\}$ and ${^i\mathbf{f}}=\{x\in\mathbf{f}\,\,|\,\,T^{-1}_i(x^+)\in\mathbf{U}^+\}$ of $\mathbf{f}$ (\cite{Lusztig_Introduction_to_quantum_groups}).
Note that there exists a unique $T_i:{_i\mathbf{f}}\rightarrow{^i\mathbf{f}}$ such that $T_i(x^+)=T_i(x)^+$.

For any finite quiver $Q=(I,H)$, Ringel introduced the Ringel-Hall algebra as an algebraic model of the positive part of the corresponding quantum group (\cite{Ringel_Hall_algebras_and_quantum_groups}). Green (\cite{green1995hall}) introduced the comultiplication on the Ringel-Hall algebra and Xiao (\cite{xiao1997drinfeld}) introduced the antipode. Under these operators, the Ringel-Hall algebra has a Hopf algebra structure. Xiao also considered the Drinfeld double of the Ringel-Hall algebra (called the double Ringel-Hall algebra),
the composition subalgebra of which gives a realization of the whole quantum group (\cite{xiao1997drinfeld}).

By this algebraic realization of a quantum group, Ringel applied the BGP reflection functors (\cite{bernstein1973coxeter}) to give realizations of Lusztig's symmetries $T_i:{_i\mathbf{f}}\rightarrow{^i\mathbf{f}}$ (\cite{Ringel_PBW-bases_of_quantum_groups}). Then Xiao-Yang (\cite{Xiao_Yang_BGP-reflection_functors_and_Lusztig's_symmetries}) and Sevenhant-Van den Bergh (\cite{Sevenhant_Van_den_Bergh_On_the_double_of_the_Hall_algebra_of_a_quiver}) realized Lusztig's symmetries $T_i:\mathbf{U}\rightarrow\mathbf{U}$ also by using the BGP reflection functors. Indeed, this method is also available to give precise constructions of Lusztig's symmetries on a double Ringel-Hall algebra (\cite{deng2002double,Deng_Xiao}).

\subsection{}
In \cite{Lusztig_Canonical_bases_arising_from_quantized_enveloping_algebra,Lusztig_Quivers_perverse_sheaves_and_the_quantized_enveloping_algebras}, Lusztig gave a geometric realization of the Lusztig's algebra $\mathbf{f}$. Let $Q=(I,H)$ be the quiver corresponding to $\mathbf{f}$. Inspired by the algebraic realization of $\mathbf{f}$ given by Ringel, Lusztig considered the variety $E_{\mathbf{V}}$ consisting of representations with dimension vector $\nu$ of the quiver $Q$ and the category $\mathcal{Q}_{\mathbf{V}}$ of some semisimple complexes on $E_{\mathbf{V}}$.
Let $K(\mathcal{Q}_{\mathbf{V}})$ be the Grothendieck group of $\mathcal{Q}_{\mathbf{V}}$.
Considering all dimension vectors,
let
$$K(\mathcal{Q})=\bigoplus_{\nu}K(\mathcal{Q}_{\mathbf{V}}),$$
which is isomorphic to the Lusztig's algebra $\mathbf{f}$.

In \cite{Xiao_Xu_Zhao_Ringel_Hall_algebras_beyond_their_quantum_groups_I}, Xiao, Xu and Zhao considered a larger category of Weil complexes on the variety $E_{\mathbf{V}}$ for any dimension vector $\nu$. They showed that the direct sum of the Grothendieck groups of these categories gives a realization of the generic Ringel-Hall algebra via Lusztig's geometric method. They also considered the Drinfeld double of the direct sum and gave a realization of the generic double Ringel-Hall algebra.

By using the method of Lusztig, Kato gave geometric realizations of Lusztig's symmetries $T_i:{_i\mathbf{f}}\rightarrow{^i\mathbf{f}}$ in the case of finite type for all $i$ (\cite{Kato_PBW_bases_and_KLR_algebras}). Then his constructions were generalized by Xiao and Zhao to all symmetrizable Cartan datum (\cite{Xiao_Zhao_Geometric_realizations_of_Lusztig's_symmetries,Xiao_Zhao_Geometric_realizations_of_Lusztig's_symmetries_of_symmetrizable_quantum_groups}).

Assume that $i$ is a sink (resp. source) of the quiver $Q$. Xiao and Zhao considered a subvariety
${_i{E_{\mathbf{V},0}}}$ (resp. ${^i{E_{\mathbf{V},0}}}$) of $E_{\mathbf{V}}$ and a category ${_i\mathcal{Q}}_{\mathbf{V},0}$ (resp. ${^i\mathcal{Q}}_{\mathbf{V},0}$) of some semisimple complexes on ${_i{E_{\mathbf{V},0}}}$ (resp. ${^i{E_{\mathbf{V},0}}}$). They showed that $K({_i\mathcal{Q}}_{0})=\bigoplus_{\nu}K({_i\mathcal{Q}}_{\mathbf{V},0})$
 (resp. $K({^i\mathcal{Q}}_{0})=\bigoplus_{\nu}K({^i\mathcal{Q}}_{\mathbf{V},0})$) realizes
${_i\mathbf{f}}$ (resp. ${^i\mathbf{f}}$).

Let $i\in I$ be a sink of the quiver $Q$. Then $i$ is a source of $Q'=\sigma_iQ$, which is the quiver by reversing the directions
of all arrows in $Q$ containing $i$.
They defined a map $\tilde{\omega}_i:K({_i\mathcal{Q}}_{0})\rightarrow K({^i\mathcal{Q}}_{0})$, which gives a realization of Lusztig's symmetry $T_i:{_i\mathbf{f}}\rightarrow{^i\mathbf{f}}$.

\subsection{}
In this paper, we shall give a geometric realization of Lusztig's symmetry $T_i:\mathbf{U}\rightarrow\mathbf{U}$ for any $i$.

Let $Q$ be the quiver corresponding to $\mathbf{U}$. First, we shall construct a skew-Hopf pairing
$(\tilde{K}(\mathcal{Q})^+,\tilde{K}(\mathcal{Q})^-,\varphi)$, where $\tilde{K}(\mathcal{Q})^+$ and $\tilde{K}(\mathcal{Q})^-$
are two Hopf algebras by adding torus algebra $\mathbf{K}=\bigoplus_{\mu}\mathcal{A}\mathbf{k}_{\mu}$ to ${K}(\mathcal{Q})$. Let
$DK(\mathcal{Q})=DK(\mathcal{Q})(Q)$ be the quotient of the Drinfeld double of this skew-Hopf pairing
module the Hopf ideal generated by $\mathbf{k}_{\mu}\otimes\mathbf{1}-\mathbf{1}\otimes\mathbf{k}_{\mu}$.
Then $DK(\mathcal{Q})$ is isomorphic to the whole quantum group $\mathbf{U}$ and has the following triangular decomposition
\begin{displaymath}
DK(\mathcal{Q})\cong {{K}(\mathcal{Q})^-}\otimes{\mathbf{K}}\otimes{{K}(\mathcal{Q})^+}.
\end{displaymath}

Then, we shall study the structure of ${K}(\mathcal{Q})$. Assume that $i$ is a sink of the quiver $Q$. We have the following theorem.
\begin{thm}\label{1}
The $\mathcal{A}$-module $K(\mathcal{Q})$ has the following direct sum decomposition
$$K(\mathcal{Q})\cong\bigoplus_{r\geq0}[\mathcal{L}_{ri}]\ast ({_ij}_{0})_!(K({_i\mathcal{Q}}_{0})).$$
\end{thm}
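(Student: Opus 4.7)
The plan is to exploit the stratification of $E_{\mathbf{V}}$ by the multiplicity of $S_i$ as a direct summand, and to identify each stratum as a smooth bundle over the open subvariety ${_iE_{\mathbf{V}_0,0}}$ for a smaller dimension vector $\mathbf{V}_0$. Fix $\nu$. Since $i$ is a sink, for $V\in E_{\mathbf{V}}$ the integer $r(V)=\dim\Hom(S_i,V)$ equals the multiplicity of $S_i$ in the direct sum decomposition of $V$; hence the locally closed, $G_{\mathbf{V}}$-stable pieces ${_iE_{\mathbf{V},r}}=\{V\in E_{\mathbf{V}}:r(V)=r\}$ stratify $E_{\mathbf{V}}$ with ${_iE_{\mathbf{V},0}}$ open and dense. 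Splitting off the $S_i^r$ summand exhibits ${_iE_{\mathbf{V},r}}$ as a smooth $G_{\mathbf{V}}$-equivariant bundle over ${_iE_{\mathbf{V}_0,0}}$, where $\mathbf{V}_0$ has dimension vector $\nu-r\alpha_i$.

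Next I would verify that $[\mathcal{L}_{ri}]\ast({_ij}_{0})_!(L)$, for $L$ a simple perverse sheaf in ${_i\mathcal{Q}}_{\mathbf{V}_0,0}$, is a single simple perverse sheaf on $E_{\mathbf{V}}$, supported on the closure of ${_iE_{\mathbf{V},r}}$, whose restriction to ${_iE_{\mathbf{V},r}}$ is (up to shift) the pullback of $L$ along the bundle map. This is a direct calculation with Lusztig's induction diagram: because $\mathcal{L}_{ri}$ is supported at the point $0\in E_{r\alpha_i}$, the induction collapses to a clean smooth-proper push-pull along the correspondence of flags whose top quotient is $S_i^r$. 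Once this is in place, each pair $(r,L)$ gives a well-defined simple object of $\mathcal{Q}_{\mathbf{V}}$, and distinct pairs give non-isomorphic simples: $r$ is read off from the generic stalk, while $L$ is read off from the restriction to the open stratum.

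Third I would establish the bijection between simples of $\mathcal{Q}_{\mathbf{V}}$ and such pairs $(r,L)$. Given a simple $P\in\mathcal{Q}_{\mathbf{V}}$, its generic $S_i$-multiplicity is a well-defined integer $r$, and restricting along the bundle ${_iE_{\mathbf{V},r}}\to{_iE_{\mathbf{V}_0,0}}$ recovers a simple $L\in{_i\mathcal{Q}}_{\mathbf{V}_0,0}$, using the Xiao-Zhao realisation $K({_i\mathcal{Q}}_0)\cong{_i\mathbf{f}}$ recalled in the introduction. Summing over $r\ge 0$ and over $\nu$ then gives a well-defined map $\bigoplus_{r\ge 0}[\mathcal{L}_{ri}]\ast({_ij}_{0})_!(K({_i\mathcal{Q}}_0))\to K(\mathcal{Q})$; it is surjective on simples by the bijection just described, and directness of the sum follows from the stratification, since classes coming from different $r$ are supported on different strata and no nontrivial linear relation can hold.

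The main obstacle will be the third step, precisely the claim that every simple perverse sheaf in $\mathcal{Q}_{\mathbf{V}}$ really is obtained as $[\mathcal{L}_{ri}]\ast({_ij}_{0})_!(L)$ for a (unique) pair $(r,L)$. This requires matching Lusztig's classification of the simples of $\mathcal{Q}_{\mathbf{V}}$ (as intersection cohomology complexes on $G_{\mathbf{V}}$-orbit closures with prescribed equivariant local systems) with the classification of simples of ${_i\mathcal{Q}}_{\mathbf{V}_0,0}$ via the bundle map, and in particular checking that Lusztig's induction functor with constant-sheaf input on the one-point variety $E_{r\alpha_i}$ preserves simplicity and that the collection of resulting objects, as $(r,L)$ varies, exhausts the simples of $\mathcal{Q}_{\mathbf{V}}$ up to isomorphism.
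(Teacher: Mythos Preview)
Your stratification and bundle picture are correct and match the paper's setup. The error is the claim in your second paragraph that $[\mathcal{L}_{ri}]\ast({_ij}_0)_!(L)$ is the class of a single simple perverse sheaf. Run the induction diagram with input $({_ij}_{\mathbf{W},0})_!L$, which is supported on the open locus of $E_{\mathbf{W}}$: the correspondence over $x\in E_{\mathbf{V}}$ parametrises codimension-$r$ subspaces $W_i\subset V_i$ containing $\mathrm{Im}(\bigoplus_{t(\rho)=i}x_\rho)$, and the further condition that the restricted representation lie in ${_iE}_{\mathbf{W},0}$ forces $W_i=\mathrm{Im}(\bigoplus x_\rho)$. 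Hence the proper pushforward is an isomorphism onto ${_iE}_{\mathbf{V},r}$ and the induced complex is exactly $({_ij}_{\mathbf{V},r})_!$ of the pullback of $L$ along your bundle map, i.e.\ extension by zero from the locally closed stratum. For $r>0$ this has non-closed support; for $r=0$ it is $j_!$ rather than $j_{!*}$. In neither case is it perverse, let alone simple, so the bijection with simples you propose in step three does not exist, and the obstacle you flag at the end is not merely hard but impossible as stated.

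The paper avoids this by never leaving the Grothendieck group. The open--closed distinguished triangles for the filtration ${_iE}_{\mathbf{V},\geq r}$ yield $[\mathcal{L}]=\sum_r[({_ij}_{\mathbf{V},r})_!({_ij}^*_{\mathbf{V},r}\mathcal{L})]$, giving one direction of the isomorphism. A separate step is then needed to show that each $[({_ij}_{\mathbf{V},r})_!\mathcal{L}']$ actually lies in $K(\mathcal{Q}_{\mathbf{V}})$ rather than merely in $K(\mathcal{D}_{G_{\mathbf{V}}}(E_{\mathbf{V}}))$---the complex itself is not in $\mathcal{Q}_{\mathbf{V}}$---and the paper handles this by passing to trace functions on $E_{\mathbf{V}}^{F^n}$ for all $n$ and invoking injectivity of the total trace map into $\prod_n\underline{\mathcal{F}}^n_{\mathbf{V}}$. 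Your proposal does not address this point, because the (false) simplicity claim would have made it automatic. The identification of $K({_i\mathcal{Q}}_{\mathbf{V},r})$ with $K({_i\mathcal{Q}}_{\mathbf{W},0})$ is finally checked by writing down mutually inverse maps ${_i\mathrm{ind}}$ and ${_i\mathrm{res}}$ at the $K$-level; no matching of simples is attempted. Your geometry is the right geometry, but the theorem is a Grothendieck-group decomposition, not a canonical-basis decomposition, and the argument must stay at that level.
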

This theorem is a geometric interpretation of the following direct sum decomposition
$$\mathbf{f}=\bigoplus_{t\geq0}\theta^{(t)}_i{_i\mathbf{f}}.$$
When $i$ is a source, we have a similar result.

Assume that $i$ is also a sink of the quiver $Q$. So $i$ is a source of $Q'=\sigma_iQ$.
By using Theorem \ref{1} and the map $\tilde{\omega}_i:K({_i\mathcal{Q}}_{0})\rightarrow K({^i\mathcal{Q}}_{0})$, we can define a map $$\tilde{T}_i:DK(\mathcal{Q})(Q)\rightarrow DK(\mathcal{Q})(Q').$$
Then we have the following main theorem in this paper.
\begin{thm}
The map $\tilde{T}_i:DK(\mathcal{Q})(Q)\rightarrow DK(\mathcal{Q})(Q')$ is an isomorphism of Hopf algebras satisfying that
the following diagram is commutative
$$
\xymatrix{
{DK(\mathcal{Q})(Q)}\ar[d]^{\cong}\ar[r]^{\tilde{T}_i}&{DK(\mathcal{Q})(Q')}\ar[d]^{\cong}\\
{\mathbf{U}_{\mathcal{A}}}\ar[r]^{T_i}&{\mathbf{U}_{\mathcal{A}}}.}
$$
\end{thm}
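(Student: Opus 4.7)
The plan is to reduce the theorem to a check on Chevalley generators via the triangular decomposition, deferring the nontrivial work to the geometric realization of $T_i$ on the ${_i}$-part that has already been established. First I would unpack $\tilde{T}_i$ using the isomorphism $DK(\mathcal{Q})(Q)\cong K(\mathcal{Q})^{-}\otimes\mathbf{K}\otimes K(\mathcal{Q})^{+}$ together with Theorem~\ref{1} and its counterpart for $K(\mathcal{Q})^{-}$. Every element becomes a sum of ordered products of the form $[\mathcal{L}_{si}]^{-}\cdot({_ij}_{0})_{!}(y)\cdot\mathbf{k}_{\mu}\cdot({_ij}_{0})_{!}(x)\cdot[\mathcal{L}_{ri}]^{+}$. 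On each middle piece coming from $K({_i\mathcal{Q}}_{0})$, $\tilde{T}_i$ is defined to be $({^ij}_{0})_{!}\circ\tilde{\omega}_i$; on $\mathbf{K}$ it acts by the reflection $s_i$ on the weight lattice; on each $[\mathcal{L}_{ri}]^{+}$ and $[\mathcal{L}_{si}]^{-}$ it is defined by the divided-power formulas for $T_i(E_i^{(r)})$ and $T_i(F_i^{(s)})$ from Lusztig's book, transported to the geometric side via the existing isomorphism $DK(\mathcal{Q})\cong\mathbf{U}_{\mathcal{A}}$.

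Next I would verify that $\tilde{T}_i$ is a well-defined algebra morphism. Within each of $K(\mathcal{Q})^{\pm}$ this is essentially the algebraic content of the existing identification of $\tilde{\omega}_i$ with $T_i|_{{_i\mathbf{f}}}$, combined with Lusztig's formulas on powers of $E_i$ (resp.\ $F_i$). The real work is compatibility with the cross-commutation relations of the Drinfeld double, which are encoded by the skew-Hopf pairing $\varphi$. Using Theorem~\ref{1} to decompose, this splits into pairings within the $({_ij}_{0})_{!}$-factors, which are preserved because $T_i$ preserves the bilinear pairing on ${_i\mathbf{f}}\times{_i\mathbf{f}}$ and $\tilde{\omega}_i$ transports this invariance geometrically, and pairings involving $[\mathcal{L}_{ri}]^{\pm}$, which reduce to direct computations matching Lusztig's formula $T_i(E_i)=-F_i K_i$ and its inverse.

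Once $\tilde{T}_i$ is known to be an algebra morphism, the commutative square reduces to checking equality on Chevalley generators of $\mathbf{U}_{\mathcal{A}}$. For $j\neq i$ the generator corresponds to $[\mathcal{L}_j]^{+}\in({_ij}_{0})_{!}(K({_i\mathcal{Q}}_{0}))^{+}$, so $\tilde{T}_i([\mathcal{L}_j]^{+})=\tilde{\omega}_i([\mathcal{L}_j]^{+})$, which corresponds to $T_i(E_j)\in{^i\mathbf{f}}$ by the cited realization result. For $j=i$ the ad hoc definition of $\tilde{T}_i([\mathcal{L}_i]^{+})$ is chosen to match $T_i(E_i)=-F_i K_i$ exactly. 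The cases of $F_j$ and $\mathbf{k}_\mu$ are symmetric. Since $T_i$ is a Hopf algebra automorphism of $\mathbf{U}_{\mathcal{A}}$ and the vertical identifications are Hopf algebra isomorphisms established earlier in the paper, the commutativity of the diagram upgrades $\tilde{T}_i$ from an algebra isomorphism to a Hopf algebra isomorphism.

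The main obstacle will be the second step: verifying that the piecewise definition of $\tilde{T}_i$ respects the Drinfeld double cross-relations. Unlike the analogous theorem for $T_i:{_i\mathbf{f}}\to{^i\mathbf{f}}$, where only one sign of $K(\mathcal{Q})$ appears, here one must control $\varphi$ on all mixed monomials produced by Theorem~\ref{1}. The technical strategy is to argue that $\varphi$ factors through the $({_ij}_{0})_{!}$-part and the $[\mathcal{L}_{ri}]^{\pm}$-part independently up to explicit torus twists, so that the already-known invariance on the geometric ${_i}$-part together with a single one-variable calculation handles the remaining cases.
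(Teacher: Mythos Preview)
Your overall strategy is sound, but you are doing significantly more work than the paper does, and in the opposite order. The paper never verifies directly that $\tilde{T}_i$ respects the Drinfeld double cross-relations. Instead it checks the commutative square \emph{first}, as an identity of $\mathcal{A}$-linear maps on the explicit basis $\{[\mathcal{L}_{ri}]^-({_ij}_{0})_!(b)^-\mathbf{k}_{\mu}[\mathcal{L}_{r'i}]^+({_ij}_{0})_!(b')^+\}$ furnished by Theorem~\ref{decom}. Because $\tilde{T}_i$ is \emph{defined} on such a basis element as the product of its values on the three tensor factors, and because both $\lambda_{\mathcal{A}}$ and $T_i$ are already known to be algebra homomorphisms, the verification collapses to checking commutativity on $[\mathcal{L}]^{\pm}$ and $\mathbf{k}_\mu$ separately; this is immediate from Theorem~\ref{theorem:Lusztig's_algebra} and the explicit defining formulas. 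Once the square commutes and the other three arrows are Hopf algebra isomorphisms, $\tilde{T}_i$ inherits the Hopf algebra isomorphism property for free.

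In your plan the logic is reversed: you first establish that $\tilde{T}_i$ is an algebra morphism (your step~2, which you correctly flag as the main obstacle), and only then reduce the square to a check on Chevalley generators. This would work, and your sketch of how $\varphi$ should factor through the ${_ij}_0$-piece and the $[\mathcal{L}_{ri}]^\pm$-piece is plausible, but it is genuinely harder and entirely avoidable. The paper's reordering buys you exactly the elimination of that obstacle: the compatibility with the skew-Hopf pairing never has to be checked, because it is transported from the known compatibility of $T_i$ with the pairing on $\mathbf{U}_{\mathcal{A}}$.
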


\section{Quantum groups and Lusztig's symmetries}

In this section, we shall recall the definitions of quantum groups and Lusztig's symmetries. We shall follow the notations in \cite{Lusztig_Introduction_to_quantum_groups}.

\subsection{}
Let $I$ be a finite index set with $|I|=n$, $A=(a_{ij})_{i,j\in I}$ be a symmetric generalized Cartan matrix, and $(A,\Pi,\Pi^{\vee},P,P^{\vee})$
be a Cartan datum associated with
$A$, where $\Pi=\{\alpha_i\,\,|\,\,i\in I\}$ is the set of simple roots, $\Pi^{\vee}=\{h_i\,\,|\,\,i\in I\}$ is the set of simple coroots, $P$ is the weight lattice and $P^{\vee}$ is the dual weight lattice. There is a symmetric bilinear form $(-,-)$ on $\mathbb{Z}I$ induced by $(i,j)=\alpha_j(h_i)=a_{ij}$.
In this paper, assume that $P^{\vee}=\mathbb{Z}\Pi^{\vee}$ and the symmetric bilinear form on $P^{\vee}$ induced by $(h_i,h_j)=(i,j)$ is also denoted by $(-,-)$.

The quantum group $\mathbf{U}$ associated with a Cartan datum $(A,\Pi,\Pi^{\vee},P,P^{\vee})$ is an associative algebra over $\mathbb{Q}(v)$ with unit element $\mathbf{1}$, generated by the elements $E_i$, $F_i(i\in I)$ and $K_{\mu}(\mu\in P^{\vee})$ subject to the following relations
\begin{enumerate}
  \item[(1)]$K_{0}=\mathbf{1}$ and $K_{\mu}K_{\mu'}=K_{\mu+\mu'}$ for all $\mu,\mu'\in P^{\vee}$,
  \item[(2)]$K_{\mu}E_{i}K_{-\mu}=v^{\alpha_i(\mu)}E_i$ for all $i\in I$, $\mu\in P^{\vee}$,
  \item[(3)]$K_{\mu}F_{i}K_{-\mu}=v^{-\alpha_i(\mu)}F_i$ for all $i\in I$, $\mu\in P^{\vee}$,
  \item[(4)]$E_iF_j-F_jE_i=\delta_{ij}\frac{K_{i}-K_{-i}}{v-v^{-1}}$ for all $i,j\in I$,
  \item[(5)]$\sum_{k=0}^{1-a_{ij}}(-1)^{k}E_i^{(k)}E_jE_i^{(1-a_{ij}-k)}=0$ for all $i\neq j\in I$,
  \item[(6)]$\sum_{k=0}^{1-a_{ij}}(-1)^{k}F_i^{(k)}F_jF_i^{(1-a_{ij}-k)}=0$ for all $i\neq j\in I$,
\end{enumerate}
where $K_i=K_{h_i}$, $[n]_{v}=\frac{v^n-v^{-n}}{v-v^{-1}}$, $E_i^{(n)}=E_i^n/[n]_{v}!$ and $F_i^{(n)}=F_i^n/[n]_{v}!$.

The comultiplication $\Delta:\mathbf{U}\rightarrow\mathbf{U}\otimes\mathbf{U}$ is an algebra homomorphism satisfying that
\begin{enumerate}
  \item[(1)]$\Delta(E_i)=E_i\otimes\mathbf{1}+K_i\otimes E_i$ for all $i\in I$,
  \item[(2)]$\Delta(F_i)=F_i\otimes K_{-i}+\mathbf{1}\otimes F_i$ for all $i\in I$,
  \item[(3)]$\Delta(K_{\mu})=K_{\mu}\otimes K_{\mu}$ for all $\mu\in P^{\vee}$.
\end{enumerate}

The antipode $S:\mathbf{U}\rightarrow\mathbf{U}^{op}$ is an algebra homomorphism satisfying that
\begin{enumerate}
  \item[(1)]$S(E_i)=-K_{-i}E_i$ for all $i\in I$,
  \item[(2)]$S(F_i)=-F_iK_{-i}$ for all $i\in I$,
  \item[(3)]$S(K_{\mu})=K_{-\mu}$ for all $\mu\in P^{\vee}$.
\end{enumerate}

The counit $\mathbf{e}:\mathbf{U}\rightarrow\mathbb{Q}(v)$ is also an algebra homomorphism satisfying that
\begin{enumerate}
  \item[(1)]$\mathbf{e}(E_i)=0$ for all $i\in I$,
  \item[(2)]$\mathbf{e}(F_i)=0$ for all $i\in I$,
  \item[(3)]$\mathbf{e}(K_{\mu})=1$ for all $\mu\in P^{\vee}$.
\end{enumerate}

\begin{theorem}[\cite{Lusztig_Introduction_to_quantum_groups}]
The algebra $(\mathbf{U},\Delta,S,\mathbf{e})$ is a Hopf algebra.
\end{theorem}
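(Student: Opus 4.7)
The plan is to verify each Hopf algebra axiom on the generators $E_i$, $F_i$, $K_\mu$ of $\mathbf{U}$ and then propagate to the whole algebra using the (anti-)algebra homomorphism properties of $\Delta$, $S$, $\mathbf{e}$. Concretely, the work splits into: (i) showing that the three maps are well-defined, i.e.\ that their images on generators respect the six defining relations of $\mathbf{U}$; (ii) verifying coassociativity and the counit axiom on generators; and (iii) verifying the antipode identity $m \circ (S \otimes \mathrm{id}) \circ \Delta = \eta \circ \mathbf{e} = m \circ (\mathrm{id} \otimes S) \circ \Delta$ on generators.

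For part (i), relations (1)--(4) reduce to short direct computations. For example, $\Delta$ respects relation (2) because $\Delta(K_\mu) = K_\mu \otimes K_\mu$ and both summands of $\Delta(E_i) = E_i \otimes \mathbf{1} + K_i \otimes E_i$ are $v^{\alpha_i(\mu)}$-eigenvectors under conjugation by $K_\mu \otimes K_\mu$; for relation (4), the cross terms in $[\Delta(E_i),\Delta(F_j)]$ cancel when $i \neq j$ and combine correctly to $\delta_{ij}\Delta((K_i - K_{-i})/(v - v^{-1}))$ when $i = j$. The analogous checks for $S$ and $\mathbf{e}$ are immediate from the grouplike form of $K_\mu$ and the sign placements in $S(E_i)$, $S(F_i)$.

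The main obstacle is showing that $\Delta$ respects the quantum Serre relations (5) and (6). For (5), I would first establish the divided-power formula
\[
\Delta(E_i^{(k)}) = \sum_{a+b = k} v^{ab}\, K_i^{a} E_i^{(b)} \otimes E_i^{(a)},
\]
substitute into the left-hand side of (5), expand into a double sum in which the two $E_j$'s are distributed between the two tensor factors in all possible ways, and collect terms by the number of $E_i$'s on each side. A quantum Vandermonde identity then shows that each surviving summand carries a Serre relator in one of the tensor factors, which vanishes in $\mathbf{U}$. Relation (6) is handled symmetrically using the analogous formula for $\Delta(F_i^{(k)})$.

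For part (ii), coassociativity and the counit axiom are verified by inspection on $E_i$, $F_i$ and $K_\mu$ using the explicit formulas. For part (iii), one checks on generators: on $K_\mu$, the identity $K_{-\mu}K_\mu = \mathbf{1} = K_\mu K_{-\mu}$ matches $\eta\mathbf{e}(K_\mu) = \mathbf{1}$; on $E_i$ one has $m(S \otimes \mathrm{id})\Delta(E_i) = S(E_i) + S(K_i)E_i = -K_{-i}E_i + K_{-i}E_i = 0 = \eta\mathbf{e}(E_i)$, and symmetrically on $F_i$ and for the opposite composition $m(\mathrm{id} \otimes S)\Delta$. Passing from generators to all of $\mathbf{U}$ is formal once $\Delta$ is an algebra map and $S$ an anti-algebra map, since the convolution $\mathrm{id} * S$ is then an algebra map and two algebra maps that agree on a generating set are equal.
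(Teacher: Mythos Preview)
The paper does not prove this theorem at all: it is stated with a citation to \cite{Lusztig_Introduction_to_quantum_groups} and no proof is given, since it is a standard background result used only to set up notation. Your outline is the standard direct verification and is essentially correct; there is nothing in the paper to compare it against.

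One small inaccuracy worth flagging: in part (iii) you say that $\mathrm{id} * S$ is an algebra map once $\Delta$ is an algebra map and $S$ an anti-algebra map. That is not quite the right statement. What actually propagates is the antipode identity itself: if $(\mathrm{id}*S)(x)=\mathbf{e}(x)\mathbf{1}$ and $(\mathrm{id}*S)(y)=\mathbf{e}(y)\mathbf{1}$, then using Sweedler notation and the anti-multiplicativity of $S$,
\[
(\mathrm{id}*S)(xy)=\sum x_{(1)}y_{(1)}S(y_{(2)})S(x_{(2)})=\sum x_{(1)}\,\mathbf{e}(y)\,S(x_{(2)})=\mathbf{e}(x)\mathbf{e}(y)\mathbf{1},
\]
and similarly for $S*\mathrm{id}$. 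This is the correct mechanism for passing from generators to all of $\mathbf{U}$; the map $\mathrm{id}*S$ itself need not be multiplicative in general. With that adjustment your plan is sound.
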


The quantum group $\mathbf{U}$ has the following triangular decomposition
\begin{displaymath}
\mathbf{U}\cong {\mathbf{U}^-}\otimes{\mathbf{U}^{0}}\otimes{\mathbf{U}^{+}},
\end{displaymath}
where $\mathbf{U}^-$, $\mathbf{U}^+$ and $\mathbf{U}^{0}$ are the subalgebras $\mathbf{U}$ generated by $F_i$, $E_i$ and $K_{\mu}$
for all $i\in I$ and $\mu\in P^{\vee}$ respectively.

\subsection{}

In \cite{Lusztig_Introduction_to_quantum_groups}, Lusztig also introduced an associative $\mathbb{Q}(v)$-algebra $\mathbf{f}$, which is generated by $\theta_i(i\in I)$ subject to the quantum Serre ralations $\sum_{k=0}^{1-a_{ij}}(-1)^{k}\theta_i^{(k)}\theta_j\theta_i^{(1-a_{ij}-k)}=0$ for all $i\not=j\in I$,
where
$\theta_i^{(n)}=\theta_i^n/[n]_{v}!$.
There are two well-defined $\mathbb{Q}(v)$-algebra monomorphisms ${^+}:\mathbf{f}\rightarrow\mathbf{U}$ and ${^-}:\mathbf{f}\rightarrow\mathbf{U}$ satisfying $E_i=\theta_i^+$ and $F_i=\theta_i^-$ for all $i\in I$ and the images of ${^+}$ and ${^-}$ are $\mathbf{U}^+$ and $\mathbf{U}^-$ respectively.

Define $|\theta_i|=i\in\mathbb{N}I$ for any $i\in I$ and $|xy|=|x|+|y|$ by induction.
There exists a unique algebra homomorphism $r:\mathbf{f}\rightarrow\mathbf{f}\otimes\mathbf{f}$ such that $r(\theta_i)=\theta_i\otimes\mathbf{1}+\mathbf{1}\otimes\theta_i$ for any $i\in I$,
where the multiplication on $\mathbf{f}\otimes\mathbf{f}$ is defined as
$(x\otimes y)(x'\otimes y')=v^{(|y|,|x'|)}xx'\otimes yy'$.

\subsection{}

Denote by $T_i:\mathbf{U}\rightarrow\mathbf{U}$ the Lusztig's symmetries for all $i\in I$
(\cite{Lusztig_Quantum_deformations_of_certain_simple_modules_over_enveloping_algebras,Lusztig_Quantum_groups_at_roots_of_1,Lusztig_Introduction_to_quantum_groups}).
The formulas of $T_i$ on the generators are
\begin{enumerate}
  \item[(1)]$T_i(E_i)=-F_i K_{i}$ and $T_i(F_i)=-K_{-i}E_i$,
  \item[(2)]$T_i(E_j)=\sum_{r+s=-a_{ij}}(-1)^rv^{-r}E_i^{(s)}E_jE_i^{(r)}$ for $i\neq j\in I$,
  \item[(3)]$T_i(F_j)=\sum_{r+s=-a_{ij}}(-1)^rv^{r}F_i^{(r)}F_jF_i^{(s)}$ for $i\neq j\in I$,
  \item[(4)]$T_i(K_{\mu})=K_{\mu-\alpha_{i}(\mu)h_i}$ for all $\mu\in P^{\vee}$.
\end{enumerate}

Let ${_i\mathbf{f}}=\{x\in\mathbf{f}\,\,|\,\,T_i(x^+)\in\mathbf{U}^+\}$ and ${^i\mathbf{f}}=\{x\in\mathbf{f}\,\,|\,\,T^{-1}_i(x^+)\in\mathbf{U}^+\}$, which are subalgebras of $\mathbf{f}$ (\cite{Lusztig_Introduction_to_quantum_groups}).
By the definitions, there exists a unique $T_i:{_i\mathbf{f}}\rightarrow{^i\mathbf{f}}$ such that
$T_i(x^+)=T_i(x)^+$.

\begin{theorem}[\cite{Lusztig_Introduction_to_quantum_groups}]\label{theorem:decom}
The algebra  $\mathbf{f}$ has the following direct sum decompositions
$$\mathbf{f}=\bigoplus_{t\geq0}\theta^{(t)}_i{_i\mathbf{f}}=\bigoplus_{t\geq0}{^i\mathbf{f}}\theta^{(t)}_i.$$
\end{theorem}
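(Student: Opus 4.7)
The plan is to use the twisted skew-derivations on $\mathbf{f}$ associated to each vertex $i$. I would introduce two $\mathbb{Q}(v)$-linear maps ${_ir}, r_i : \mathbf{f} \to \mathbf{f}$ obtained by extracting, respectively, the $\theta_i \otimes -$ and $- \otimes \theta_i$ components from the comultiplication $r$. Hence ${_ir}(\theta_j) = \delta_{ij}$, and from the twisted product $(x\otimes y)(x'\otimes y') = v^{(|y|,|x'|)} xx' \otimes yy'$ on $\mathbf{f}\otimes\mathbf{f}$ one derives the skew-Leibniz rule
\[
{_ir}(xy) = {_ir}(x)\, y + v^{(i,|x|)} x\, {_ir}(y),
\]
with a symmetric formula for $r_i$. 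A direct check using Lusztig's explicit expressions for $T_i(E_j)$ then identifies ${_i\mathbf{f}} = \ker({_ir})$ and ${^i\mathbf{f}} = \ker(r_i)$.

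With this reformulation, the first decomposition reduces to showing $\mathbf{f} = \bigoplus_{t\geq 0} \theta_i^{(t)} \ker({_ir})$. A short induction with the skew-Leibniz rule gives ${_ir}(\theta_i^{(t)}) = v^{t-1} \theta_i^{(t-1)}$, and hence for any $y \in {_i\mathbf{f}}$,
\[
{_ir}(\theta_i^{(t)} y) = v^{t-1} \theta_i^{(t-1)} y.
\]
I would prove spanning by induction on the $i$-component $\nu_i$ of the weight $\nu = |x|$: for homogeneous $x$ with $\nu_i > 0$, the element ${_ir}(x)$ has strictly smaller $i$-degree, so by induction ${_ir}(x) = \sum_s \theta_i^{(s)} y_s$ with $y_s \in {_i\mathbf{f}}$; then $x' := \sum_s v^{-s} \theta_i^{(s+1)} y_s$ satisfies ${_ir}(x') = {_ir}(x)$, so $x - x' \in \ker({_ir}) = {_i\mathbf{f}}$, giving $x = (x - x') + x' \in \sum_{t \geq 0} \theta_i^{(t)} {_i\mathbf{f}}$. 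For directness, iterated application of ${_ir}$ to a relation $\sum_t \theta_i^{(t)} y_t = 0$ with $y_t \in {_i\mathbf{f}}$ produces a strictly triangular linear system in the $y_t$'s, forcing each of them to vanish; equivalently, one may invoke Lusztig's nondegenerate symmetric bilinear form on $\mathbf{f}$, under which ${_ir}$ is adjoint (up to a nonzero scalar) to left multiplication by $\theta_i$, making the summands pairwise orthogonal.

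The second decomposition $\mathbf{f} = \bigoplus_{t \geq 0} {^i\mathbf{f}}\, \theta_i^{(t)}$ follows by the mirror argument with $r_i$ in place of ${_ir}$. The main obstacle is the precise bookkeeping of $v$-twists in the skew-Leibniz computations and the choice of the correcting combination $\sum_s v^{-s} \theta_i^{(s+1)} y_s$ at each inductive step. One must verify the exact formula ${_ir}(\theta_i^{(t)}) = v^{t-1} \theta_i^{(t-1)}$, ensure that the scalars arising in the recursion are invertible over $\mathbb{Q}(v)$, and confirm that the triangular inversion is uniquely solvable; these checks are purely mechanical, but they constitute the technical core of the argument.
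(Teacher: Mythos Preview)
The paper does not supply its own proof of this statement; it is quoted as a result from Lusztig's book. Your outline is essentially the argument found there (Chapter~38 of \emph{Introduction to Quantum Groups}): introduce the skew-derivations ${_ir},r_i$, prove the purely algebraic decomposition $\mathbf{f}=\bigoplus_{t\ge0}\theta_i^{(t)}\ker({_ir})$ by the induction you describe, and then identify $\ker({_ir})$ with ${_i\mathbf{f}}$. Your spanning and directness arguments are correct as sketched, and the formula ${_ir}(\theta_i^{(t)})=v^{t-1}\theta_i^{(t-1)}$ and the correcting element $x'=\sum_s v^{-s}\theta_i^{(s+1)}y_s$ check out.

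There is, however, one genuine gap. The identification ${_i\mathbf{f}}=\ker({_ir})$ is \emph{not} ``a direct check using Lusztig's explicit expressions for $T_i(E_j)$''. The subalgebra ${_i\mathbf{f}}$ is defined as $\{x\in\mathbf{f}\mid T_i(x^+)\in\mathbf{U}^+\}$, not by generators, so inspecting $T_i(E_j)$ for each $j$ says nothing about a general $x$. In Lusztig's treatment this identification is obtained via the commutator identity (his Proposition~3.1.6)
\[
x^+F_i-F_ix^+=\frac{r_i(x)^+K_{-i}-K_i\,{_ir}(x)^+}{v-v^{-1}},
\]
which, combined with the triangular decomposition of $\mathbf{U}$, characterizes when $T_i(x^+)$ lands in $\mathbf{U}^+$; alternatively one first proves the algebraic decomposition $\mathbf{f}=\bigoplus_t\theta_i^{(t)}\ker({_ir})$, and then uses the multiplicativity of $T_i$ together with $T_i(E_i)=-F_iK_i\notin\mathbf{U}^+$ to force ${_i\mathbf{f}}=\ker({_ir})$. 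Either way the identification is a consequence of structural facts about $\mathbf{U}$ and is proved \emph{after} (or independently of) the decomposition, not before it. You should reorder your argument accordingly and replace the ``direct check'' by one of these two justifications.
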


%
%

\section{Geometric realization of the Lusztig's algebra $\mathbf{f}$}

In this section, we shall recall the geometric realization of the algebra $\mathbf{f}$ given by Lusztig (\cite{Lusztig_Canonical_bases_arising_from_quantized_enveloping_algebra,Lusztig_Quivers_perverse_sheaves_and_the_quantized_enveloping_algebras,Lusztig_Introduction_to_quantum_groups}).

\subsection{}
A quiver $Q=(I,H,s,t)$ consists of a vertex set $I$, an arrow set $H$, and two maps $s,t:H\rightarrow I$ such that an arrow $\rho\in H$ starts at $s(\rho)$ and terminates at $t(\rho)$. Let $h_{ij}=\#\{i\rightarrow j\}$, $a_{ij}=h_{ij}+h_{ji}$ and $\mathbf{f}$ be the Lusztig's algebra corresponding to $A=(a_{ij})$.
Let $p$ be a prime and $q$ be a power of $p$. Denote by $\mathbb{F}_q$ the finite field with $q$ elements and $\mathbb{K}=\overline{\mathbb{F}}_q$.

For a finite dimensional $I$-graded $\mathbb{K}$-vector space $\mathbf{V}=\bigoplus_{i\in I}V_i$, define
$$E_\mathbf{V}=E_{\mathbf{V},Q}=\bigoplus_{\rho\in H}\textrm{Hom}_{\mathbb{K}}(V_{s(\rho)},V_{t(\rho)}).$$
The dimension vector of $\mathbf{V}$ is defined as $\underline{\dim}\mathbf{V}=\sum_{i\in I}(\dim_{\mathbb{K}}V_{i})i\in\mathbb{N}I$, which can also be viewed as an element in the weight lattice $P$. The algebraic group $G_{\mathbf{V}}=\prod_{i\in I}GL_{\mathbb{K}}(V_i)$ acts on $E_\mathbf{V}$ naturally.

Fix a nonzero element $\nu\in\mathbb{N}I$. Let
$$Y_{\nu}=\{\mathbf{y}=(\mathbf{i},\mathbf{a})\,\,|\,\,\sum_{l=1}^{k}a_li_l=\nu\},$$
where $\mathbf{i}=(i_1,i_2,\ldots,i_k),\,\,i_l\in I$, $\mathbf{a}=(a_1,a_2,\ldots,a_k),\,\,a_l\in\mathbb{N}$.
Fix a finite dimensional $I$-graded $\mathbb{K}$-vector space $\mathbf{V}$ such that $\underline{\dim}\mathbf{V}=\nu$.
For any element $\mathbf{y}=(\mathbf{i},\mathbf{a})$,
a flag of type $\mathbf{y}$ in $\mathbf{V}$ is a sequence
$\phi=(\mathbf{V}=\mathbf{V}^k\supset\mathbf{V}^{k-1}\supset\dots\supset\mathbf{V}^0=0)$
of $I$-graded $\mathbb{K}$-vector spaces such that $\underline{\dim}\mathbf{V}^l/\mathbf{V}^{l-1}=a_li_l$.

Let $F_{\mathbf{y}}$ be the variety of all flags of type $\mathbf{y}$ in $\mathbf{V}$. For any $x=(x_{\rho})_{\rho\in H}\in E_{\mathbf{V}}$, a flag $\phi$ is called $x$-stable if $x_{\rho}(V^l_{s(\rho)})\subset{V}^l_{t(\rho)}$ for all $l$ and all $\rho\in H$. Let
$$\tilde{F}_{\mathbf{y}}=\{(x,\phi)\in E_{\mathbf{V}}\times F_{\mathbf{y}}\,\,|\,\,\textrm{$\phi$ is $x$-stable}\}$$
and $\pi_{\mathbf{y}}:\tilde{F}_{\mathbf{y}}\rightarrow E_{\mathbf{V}}$
be the projection to $E_{\mathbf{V}}$.

Let
$\overline{\mathbb{Q}}_{l}$ be the $l$-adic field and
$\mathcal{D}_{G_{\mathbf{V}}}(E_{\mathbf{V}})$ be the bounded $G_{\mathbf{V}}$-equivariant derived category of ${\overline{\bbQ}_l}$-constructible complexes on $E_{\mathbf{V}}$. For each $\mathbf{y}\in{Y}_{\nu}$, $\mathcal{L}_{\mathbf{y}}=(\pi_{\mathbf{y}})_!(\mathbf{1}_{\tilde{F}_{\mathbf{y}}})[d_{\mathbf{y}}](\frac{d_{\mathbf{y}}}{2})\in\mathcal{D}_{G_{\mathbf{V}}}(E_{\mathbf{V}})$ is a semisimple perverse sheaf, where $d_{\mathbf{y}}=\dim\tilde{F}_{\mathbf{y}}$ and $\mathcal{L}(d)$ is the Tate twist of $\mathcal{L}$.
Let $\mathcal{P}_{\mathbf{V}}$ be the set of simple perverse sheaves $\mathcal{L}$ on $E_{\mathbf{V}}$ such that $\mathcal{L}[r]$ appears as a direct summand of $\mathcal{L}_{\mathbf{y}}$ for some $\mathbf{y}\in Y_{\nu}$ and $r\in\mathbb{Z}$. Let $\mathcal{Q}_{\mathbf{V}}$ be the full subcategory of $\mathcal{D}_{G_{\mathbf{V}}}(E_{\mathbf{V}})$ consisting of all complexes which are isomorphic to finite direct sums of complexes in the set
$\{\mathcal{L}[r]\,\,|\,\,\mathcal{L}\in\mathcal{P}_{\mathbf{V}},r\in\mathbb{Z}\}$.

Let $K(\mathcal{Q}_{\mathbf{V}})$ be the Grothendieck group of $\mathcal{Q}_{\mathbf{V}}$ and
define $v^{\pm}[\mathcal{L}]=[\mathcal{L}[\pm1](\pm\frac{1}{2})]$ for any $\mathcal{L}\in\mathcal{Q}_{\mathbf{V}}$.
Then, $K(\mathcal{Q}_{\mathbf{V}})$ is a free $\mathcal{A}$-module, where $\mathcal{A}=\mathbb{Z}[v,v^{-1}]$.
Define $$K(\mathcal{Q})=\bigoplus_{\nu\in\mathbb{N}I}K(\mathcal{Q}_{\mathbf{V}}).$$

Let $\mathbf{B}_{\nu}=\{[\mathcal{L}]\,\,|\,\,\mathcal{L}\in\mathcal{P}_{\mathbf{V}}\}$
and $\mathbf{B}=\sqcup_{\nu\in\mathbb{N}I}\mathbf{B}_{\nu}$.
Then $\mathbf{B}$ is the canonical basis of $K(\mathcal{Q})$ introduced by Lusztig in \cite{Lusztig_Canonical_bases_arising_from_quantized_enveloping_algebra,Lusztig_Quivers_perverse_sheaves_and_the_quantized_enveloping_algebras}.

\subsection{}

For $\nu,\nu',\nu''\in\mathbb{N}I$ such that $\nu=\nu'+\nu''$ and three $I$-graded $\mathbb{K}$-vector spaces $\mathbf{V}$, $\mathbf{V}'$, $\mathbf{V}''$ such that $\underline{\dim}\mathbf{V}=\nu$, $\underline{\dim}\mathbf{V}'=\nu'$, $\underline{\dim}\mathbf{V}''=\nu''$,
Lusztig introduced the induction functor
$${\textrm{Ind}}_{\mathbf{V}',\mathbf{V}''}^{\mathbf{V}}:\mathcal{Q}_{\mathbf{V}'}\times\mathcal{Q}_{\mathbf{V}''}\rightarrow\mathcal{Q}_{\mathbf{V}},$$
which induces the following $\mathcal{A}$-bilinear operator
\begin{eqnarray*}
\ast={\textrm{ind}}_{\mathbf{V}',\mathbf{V}''}^{\mathbf{V}}:K(\mathcal{Q}_{\mathbf{V}'})\times K(\mathcal{Q}_{\mathbf{V}''})&\rightarrow&K(\mathcal{Q}_{\mathbf{V}})\\
([\mathcal{L}']\,,\,[\mathcal{L}''])&\mapsto&[\mathcal{L}']\ast[\mathcal{L}'']=[{\textrm{Ind}}_{\mathbf{V}',\mathbf{V}''}^{\mathbf{V}}(\mathcal{L}',\mathcal{L}'')].
\end{eqnarray*}
Under these operators, $K(\mathcal{Q})$ becomes an associative $\mathcal{A}$-algebra.

For $\nu,\nu_1,\nu_2,\dots,\nu_s\in\mathbb{N}I$ such that $\nu=\nu_1+\nu_2+\cdots+\nu_s$ and $I$-graded $\mathbb{K}$-vector spaces $\mathbf{V}, \mathbf{V}_1, \mathbf{V}_2,\dots,\mathbf{V}_s$ such that $\underline{\dim}\mathbf{V}=\nu$, $\underline{\dim}\mathbf{V}_l=\nu_l$, we can define the $s$-fold version of the induction functor ${\textrm{ind}}_{\mathbf{V}_1, \mathbf{V}_2,\dots,\mathbf{V}_s}^{\mathbf{V}}$ by induction (\cite{Xiao_Xu_Zhao_Ringel_Hall_algebras_beyond_their_quantum_groups_I}).

\subsection{}


For $\nu,\nu',\nu''\in\mathbb{N}I$ such that $\nu=\nu'+\nu''$ and three $I$-graded $\mathbb{K}$-vector spaces $\mathbf{V}$, $\mathbf{V}'$, $\mathbf{V}''$ such that $\underline{\dim}\mathbf{V}=\nu$, $\underline{\dim}\mathbf{V}'=\nu'$, $\underline{\dim}\mathbf{V}''=\nu''$,
%
Lusztig introduced the restriction functor
$${\textrm{Res}}_{\mathbf{V}',\mathbf{V}''}^{\mathbf{V}}:\mathcal{Q}_{\mathbf{V}}\rightarrow\mathcal{Q}_{\mathbf{V}'}\times\mathcal{Q}_{\mathbf{V}''},$$
which induces the following $\mathcal{A}$-bilinear operator
\begin{eqnarray*}
{\textrm{res}}_{\mathbf{V}',\mathbf{V}''}^{\mathbf{V}}:K(\mathcal{Q}_{\mathbf{V}})&\rightarrow&K(\mathcal{Q}_{\mathbf{V}'})\times K(\mathcal{Q}_{\mathbf{V}''})\\
\textrm{$[\mathcal{L}]$}&\mapsto&[\textrm{Res}_{\mathbf{V}',\mathbf{V}''}^{\mathbf{V}}(\mathcal{L})].
\end{eqnarray*}
Under these operators,  we have an operator $\textrm{res}:K(\mathcal{Q})\rightarrow K(\mathcal{Q})\otimes K(\mathcal{Q})$.

For $\nu,\nu_1,\nu_2,\dots,\nu_s\in\mathbb{N}I$ such that $\nu=\nu_1+\nu_2+\cdots+\nu_s$ and $I$-graded $\mathbb{K}$-vector spaces $\mathbf{V}, \mathbf{V}_1, \mathbf{V}_2,\dots,\mathbf{V}_s$ such that $\underline{\dim}\mathbf{V}=\nu$, $\underline{\dim}\mathbf{V}_l=\nu_l$, we can define the $s$-fold version of the restriction functor ${\textrm{res}}_{\mathbf{V}_1, \mathbf{V}_2,\dots,\mathbf{V}_s}^{\mathbf{V}}$ by induction (\cite{Xiao_Xu_Zhao_Ringel_Hall_algebras_beyond_their_quantum_groups_I}).

\subsection{}

Fix $\nu,\nu_1,\nu_2,\nu',\nu''\in\mathbb{N}I$ such that $\nu=\nu_1+\nu_2=\nu'+\nu''$ and $I$-graded $\mathbb{K}$-vector spaces $\mathbf{V}, \mathbf{V}_1, \mathbf{V}_2,\mathbf{V}',\mathbf{V}''$ such that $\underline{\dim}\mathbf{V}=\nu, \underline{\dim}\mathbf{V}_1=\nu_1, \underline{\dim}\mathbf{V}_2=\nu_2,\underline{\dim}\mathbf{V}'=\nu',\underline{\dim}\mathbf{V}''=\nu''$.

\begin{proposition}[\cite{Lusztig_Quivers_perverse_sheaves_and_the_quantized_enveloping_algebras,Lusztig_Introduction_to_quantum_groups,Xiao_Xu_Zhao_Ringel_Hall_algebras_beyond_their_quantum_groups_I}]
For any $\mathcal{L}_1\in\mathcal{Q}_{\mathbf{V}_1}$ and $\mathcal{L}_2\in\mathcal{Q}_{\mathbf{V}_2}$, we have
$$\mathrm{Res}^{\mathbf{V}}_{\mathbf{V}', \mathbf{V}''}(\mathrm{Ind}^{\mathbf{V}}_{\mathbf{V}_1, \mathbf{V}_2}(\mathcal{L}_1,\mathcal{L}_2))=
\bigoplus_{\nu'_1,\nu''_1,\nu'_2,\nu''_2}
\widehat{\mathrm{Ind}^{\mathbf{V}'}_{\mathbf{V}'_1, \mathbf{V}'_2}\otimes\mathrm{Ind}^{\mathbf{V}''}_{\mathbf{V}''_1, \mathbf{V}''_2}}
(\mathrm{Res}^{\mathbf{V}_1}_{\mathbf{V}'_1, \mathbf{V}''_1}(\mathcal{L}_1),\mathrm{Res}^{\mathbf{V}_2}_{\mathbf{V}'_2, \mathbf{V}''_2}(\mathcal{L}_2)),$$
where $\mathbf{V}'_1,\mathbf{V}''_1,\mathbf{V}'_2,\mathbf{V}''_2$ are $I$-graded $\mathbb{K}$-vector spaces with dimension vectors $\nu'_1,\nu''_1,\nu'_2,\nu''_2$
such that $\nu'_1+\nu''_1=\nu_1$, $\nu'_2+\nu''_2=\nu_2$, $\nu'_1+\nu'_2=\nu'$, $\nu''_1+\nu''_2=\nu''$ and the functor $\widehat{\mathrm{Ind}^{\mathbf{V}'}_{\mathbf{V}'_1, \mathbf{V}'_2}\otimes\mathrm{Ind}^{\mathbf{V}''}_{\mathbf{V}''_1, \mathbf{V}''_2}}$ is just the twist of
$\mathrm{Ind}^{\mathbf{V}'}_{\mathbf{V}'_1, \mathbf{V}'_2}\otimes{\mathrm{Ind}}^{\mathbf{V}''}_{\mathbf{V}''_1, \mathbf{V}''_2}$.
\end{proposition}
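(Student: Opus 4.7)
The plan is to compute both sides as pushforward-pullback along correspondence diagrams, form the fibre product of the induction and restriction diagrams, stratify that fibre product by the refined dimension vectors $\nu'_1,\nu''_1,\nu'_2,\nu''_2$, and match each stratum with a summand on the right-hand side using proper base change.

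First I would recall Lusztig's defining diagrams. The induction functor $\mathrm{Ind}^{\mathbf{V}}_{\mathbf{V}_1,\mathbf{V}_2}$ is built from
$$E_{\mathbf{V}_1}\times E_{\mathbf{V}_2}\xleftarrow{\,p_1\,}E'\xrightarrow{\,p_2\,}E''\xrightarrow{\,p_3\,}E_{\mathbf{V}},$$
where $E''$ parametrises pairs $(x,W)$ with $W\subset\mathbf{V}$ an $I$-graded $x$-stable subspace of dimension vector $\nu_2$, and $E'$ is the principal bundle over $E''$ trivialising $W\cong\mathbf{V}_2$ and $\mathbf{V}/W\cong\mathbf{V}_1$; the map $p_3$ is proper, $p_2$ is a smooth principal bundle, and $p_1$ is smooth. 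The restriction functor $\mathrm{Res}^{\mathbf{V}}_{\mathbf{V}',\mathbf{V}''}$ is built from
$$E_{\mathbf{V}'}\times E_{\mathbf{V}''}\xleftarrow{\,\kappa\,}F\xrightarrow{\,\iota\,}E_{\mathbf{V}},$$
with $F\subset E_{\mathbf{V}}$ the closed subvariety of representations preserving a fixed $I$-graded subspace $\mathbf{V}''\subset\mathbf{V}$ of dimension $\nu''$.

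Next I would form the Cartesian square pulling $\iota$ back along $p_3$, yielding a variety $Z\subset E''\times F$ of triples $(x,W,\mathbf{V}'')$ in which both $W$ and $\mathbf{V}''$ are $x$-stable. The locally closed stratification
$$Z=\bigsqcup_{\nu''_2} Z_{\nu''_2},\qquad Z_{\nu''_2}=\{(x,W,\mathbf{V}'')\mid\underline{\dim}(W\cap\mathbf{V}'')=\nu''_2\},$$
is equivalent to the four-index stratification of the statement via $\nu'_2=\nu_2-\nu''_2$, $\nu''_1=\nu''-\nu''_2$, $\nu'_1=\nu_1-\nu''_1$. On the stratum indexed by $(\nu'_1,\nu''_1,\nu'_2,\nu''_2)$, the pair $(W\cap\mathbf{V}''\subset W)$ is a two-step filtration of $\mathbf{V}_2=W$ of type $(\nu''_2,\nu'_2)$, while the image of $\mathbf{V}''$ in $\mathbf{V}/W=\mathbf{V}_1$ is a subspace of dimension $\nu''_1$ with quotient of dimension $\nu'_1$. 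The stratum therefore carries canonical maps to the restriction diagrams computing $\mathrm{Res}^{\mathbf{V}_1}_{\mathbf{V}'_1,\mathbf{V}''_1}(\mathcal{L}_1)$ and $\mathrm{Res}^{\mathbf{V}_2}_{\mathbf{V}'_2,\mathbf{V}''_2}(\mathcal{L}_2)$ and, after projecting onto the separate factors of $\mathbf{V}''$ and $\mathbf{V}'$, to the induction diagrams computing $\mathrm{Ind}^{\mathbf{V}'}_{\mathbf{V}'_1,\mathbf{V}'_2}$ and $\mathrm{Ind}^{\mathbf{V}''}_{\mathbf{V}''_1,\mathbf{V}''_2}$. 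Proper base change applied to the Cartesian square rewrites $\iota^*(p_3)_!$ as $(p_3^Z)_!(\iota^Z)^*$, and combined with the stratification decomposes $\mathrm{Res}^{\mathbf{V}}_{\mathbf{V}',\mathbf{V}''}(\mathrm{Ind}^{\mathbf{V}}_{\mathbf{V}_1,\mathbf{V}_2}(\mathcal{L}_1,\mathcal{L}_2))$ as a direct sum over the four-index set, each summand being the composite diagram just described.

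The main obstacle I expect is pinning down the twist denoted by the hat in $\widehat{\mathrm{Ind}\otimes\mathrm{Ind}}$. Three sources of shifts must be balanced: the $[d_{\mathbf{y}}](d_{\mathbf{y}}/2)$ normalisations built into $\mathrm{Ind}$ and $\mathrm{Res}$, the smooth-pullback shifts introduced by base change on each stratum, and the discrepancy between $\dim Z_{\nu'_1,\nu''_1,\nu'_2,\nu''_2}$ and $\dim F$. A direct count of the $\mathrm{Hom}$-spaces contributing to the relative dimension yields $\sum_{\rho\in H}\bigl(\nu''_{1,s(\rho)}\nu'_{2,t(\rho)}+\nu''_{1,t(\rho)}\nu'_{2,s(\rho)}\bigr)=(\nu''_1,\nu'_2)$, exactly the exponent of $v$ in the twisted multiplication $(x\otimes y)(x'\otimes y')=v^{(|y|,|x'|)}xx'\otimes yy'$ on $\mathbf{f}\otimes\mathbf{f}$. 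This identifies the hat as the corresponding shift and Tate twist and closes the argument; everything else (vanishing of Ext between strata of different indices, semisimplicity, equivariance) is automatic from the Decomposition Theorem and the fact that $Z_{\nu'_1,\nu''_1,\nu'_2,\nu''_2}$ is $G_{\mathbf{V}}$-stable.
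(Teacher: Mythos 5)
The paper itself offers no proof of this proposition: it is quoted from Lusztig and from Xiao--Xu--Zhao, so the only comparison available is with the argument in those sources. Its skeleton --- fibre product of the induction correspondence $E_{\mathbf{V}_1}\times E_{\mathbf{V}_2}\leftarrow E'\rightarrow E''\rightarrow E_{\mathbf{V}}$ with the restriction correspondence $E_{\mathbf{V}'}\times E_{\mathbf{V}''}\leftarrow F\rightarrow E_{\mathbf{V}}$, proper base change, and stratification of $Z=E''\times_{E_{\mathbf{V}}}F$ by $\underline{\dim}(W\cap\mathbf{V}'')$ --- is exactly what you reproduce, so your overall route is the standard one. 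Two points in your sketch are not right as written, and the first is a genuine gap.

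First, the stratification by itself only produces a finite sequence of distinguished triangles, i.e.\ a filtration of $\mathrm{Res}^{\mathbf{V}}_{\mathbf{V}',\mathbf{V}''}\mathrm{Ind}^{\mathbf{V}}_{\mathbf{V}_1,\mathbf{V}_2}(\mathcal{L}_1,\mathcal{L}_2)$ whose graded pieces are the desired summands; the asserted direct sum requires the connecting morphisms to vanish, and this is not ``automatic from the Decomposition Theorem'', which says nothing about splitting the triangles attached to an open/closed decomposition. That splitting is the real content of the sheaf-level statement; in the cited sources it is handled by reducing to the flag complexes $\mathcal{L}_{\mathbf{y}}$ and using purity/parity of the strata contributions (alternatively one only claims the identity in the Grothendieck group, which is all this paper ever uses, since classes are additive over distinguished triangles --- but then one proves a weaker statement than the one displayed). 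Second, your identification of the twist does not balance: the arrow count $\sum_{\rho\in H}\bigl(\nu''_{1,s(\rho)}\nu'_{2,t(\rho)}+\nu''_{1,t(\rho)}\nu'_{2,s(\rho)}\bigr)$ equals $\sum_{i\neq j}a_{ij}\,\nu''_{1,i}\nu'_{2,j}$ for a loop-free quiver, which differs from $(\nu''_1,\nu'_2)$ by the diagonal term $2\sum_i\nu''_{1,i}\nu'_{2,i}$. That missing term does not come from arrow Hom-spaces but from the vertex-wise Hom-spaces entering the normalisations of $\mathrm{Ind}$ and $\mathrm{Res}$ (the fibre dimensions of $p_1$, $p_2$ and of the vector bundle $\kappa$), so while the mechanism you name --- the discrepancy between the stratum dimension and the dimensions in the product diagrams --- is indeed the correct source of the twist, the bookkeeping as you present it does not yet yield the exponent $(\nu''_1,\nu'_2)$ matching the twisted multiplication on $\mathbf{f}\otimes\mathbf{f}$.
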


As a corollary, we have

\begin{corollary}
The operator $\mathrm{res}:K(\mathcal{Q})\rightarrow K(\mathcal{Q})\otimes K(\mathcal{Q})$ is an algebra homomorphism with respect to the twisted multiplication on $K(\mathcal{Q})\otimes K(\mathcal{Q})$.
\end{corollary}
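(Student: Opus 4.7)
The plan is to recognize this Corollary as the Grothendieck-group shadow of the preceding Proposition, once one identifies the hat twist in $\widehat{\mathrm{Ind}\otimes\mathrm{Ind}}$ with the twist in the multiplication on $K(\mathcal{Q})\otimes K(\mathcal{Q})$. By $\mathcal{A}$-bilinearity of $\mathrm{res}$ and of both products involved, it suffices to check, for arbitrary $\mathcal{L}_j\in\mathcal{Q}_{\mathbf{V}_j}$ with $\underline{\dim}\mathbf{V}_j=\nu_j$ and $\nu=\nu_1+\nu_2$, the single identity
$$\mathrm{res}([\mathcal{L}_1]\ast[\mathcal{L}_2]) = \mathrm{res}([\mathcal{L}_1])\cdot\mathrm{res}([\mathcal{L}_2]),$$
where $\cdot$ denotes the twisted multiplication on $K(\mathcal{Q})\otimes K(\mathcal{Q})$.

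First I would unwind the left-hand side: by the definitions of $\ast$ and $\mathrm{res}$ it equals
$$\sum_{\nu'+\nu''=\nu}[\mathrm{Res}^{\mathbf{V}}_{\mathbf{V}',\mathbf{V}''}\,\mathrm{Ind}^{\mathbf{V}}_{\mathbf{V}_1,\mathbf{V}_2}(\mathcal{L}_1,\mathcal{L}_2)],$$
and the Proposition rewrites each such summand as a direct sum, indexed by quadruples $(\nu'_1,\nu''_1,\nu'_2,\nu''_2)$ with $\nu'_1+\nu''_1=\nu_1$, $\nu'_2+\nu''_2=\nu_2$, $\nu'_1+\nu'_2=\nu'$, $\nu''_1+\nu''_2=\nu''$, of classes $[\widehat{\mathrm{Ind}^{\mathbf{V}'}_{\mathbf{V}'_1,\mathbf{V}'_2}\otimes\mathrm{Ind}^{\mathbf{V}''}_{\mathbf{V}''_1,\mathbf{V}''_2}}(\mathrm{Res}^{\mathbf{V}_1}_{\mathbf{V}'_1,\mathbf{V}''_1}(\mathcal{L}_1),\mathrm{Res}^{\mathbf{V}_2}_{\mathbf{V}'_2,\mathbf{V}''_2}(\mathcal{L}_2))]$. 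For the right-hand side I would expand $\mathrm{res}([\mathcal{L}_j])=\sum_{\nu'_j+\nu''_j=\nu_j}[\mathrm{Res}^{\mathbf{V}_j}_{\mathbf{V}'_j,\mathbf{V}''_j}(\mathcal{L}_j)]$ and apply the twisted product rule $(x\otimes y)(x'\otimes y')=v^{(|y|,|x'|)}(x\ast x')\otimes(y\ast y')$ with $|y|=\nu''_1$ and $|x'|=\nu'_2$. This produces a sum over the same index set whose typical summand is $v^{(\nu''_1,\nu'_2)}$ times a tensor of two induction classes built from the same restriction functors that appear in the Proposition.

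The two sides are then indexed by the same set of quadruples and feature the same pair of induction functors in the two tensor slots, so a term-by-term comparison reduces the Corollary to verifying that the hat symbol in $\widehat{\mathrm{Ind}^{\mathbf{V}'}_{\mathbf{V}'_1,\mathbf{V}'_2}\otimes\mathrm{Ind}^{\mathbf{V}''}_{\mathbf{V}''_1,\mathbf{V}''_2}}$ amounts, at the level of the Grothendieck group, to scalar multiplication by $v^{(\nu''_1,\nu'_2)}$. This is the very content of the definition of the hat, which is designed so that the Euler-form shift dictated by the geometry of the restriction diagram matches the twist prescribed on $K(\mathcal{Q})\otimes K(\mathcal{Q})$. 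The main obstacle is therefore purely the bookkeeping of degree shifts and Tate twists, and it has already been worked out in \cite{Lusztig_Quivers_perverse_sheaves_and_the_quantized_enveloping_algebras} and \cite{Xiao_Xu_Zhao_Ringel_Hall_algebras_beyond_their_quantum_groups_I}; once invoked, the Corollary follows at once from the Proposition.
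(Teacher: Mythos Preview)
Your proposal is correct and matches the paper's approach: the paper states this result without proof, merely as an immediate consequence of the preceding Proposition, and your argument spells out exactly the term-by-term comparison and twist identification that makes this inference work.
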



\begin{theorem}[\cite{Lusztig_Quivers_perverse_sheaves_and_the_quantized_enveloping_algebras,Lusztig_Introduction_to_quantum_groups}]\label{theorem:3.1}
There is a unique $\mathcal{A}$-algebra isomorphism
$$\lambda_{\mathcal{A}}:K(\mathcal{Q})\rightarrow\mathbf{f}_{\mathcal{A}}$$
such that $$\lambda_{\mathcal{A}}\otimes\lambda_{\mathcal{A}}(\mathrm{res}(x))=r(\lambda_{\mathcal{A}}(x))$$ and $\lambda_{\mathcal{A}}(\mathcal{L}_{\mathbf{y}})=\theta_{\mathbf{y}}$ for all $\mathbf{y}\in Y_{\nu}$, where $\theta_{\mathbf{y}}=\theta_{i_1}^{(a_1)}\theta_{i_2}^{(a_2)}\cdots\theta_{i_k}^{(a_k)}$ and $\mathbf{f}_{\mathcal{A}}$ is the integral form of $\mathbf{f}$.
\end{theorem}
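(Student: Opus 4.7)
\smallskip

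\noindent\textbf{Proof proposal.} The plan is to build $\lambda_{\mathcal{A}}$ directly on the generating family $\{[\mathcal{L}_{\mathbf{y}}]\}_{\mathbf{y}\in Y_\nu}$ and then extend it using the algebra structure. Concretely, I would first observe that, by the very definition of the induction functor, the monomial flag complexes factor as
\[
\mathcal{L}_{\mathbf{y}} \;\cong\; \mathcal{L}_{i_1,a_1}\ast\mathcal{L}_{i_2,a_2}\ast\cdots\ast\mathcal{L}_{i_k,a_k},
\]
where $\mathcal{L}_{i,a}$ is the constant sheaf on the one-point variety $E_{\mathbf{V}}$ with $\underline{\dim}\mathbf{V}=ai$, suitably shifted and twisted so as to match the quantum divided power $\theta_i^{(a)}$. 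Since the $[\mathcal{L}_{\mathbf{y}}]$ span $K(\mathcal{Q})$ as an $\mathcal{A}$-module (this is essentially the definition of $\mathcal{Q}_{\mathbf{V}}$), such a map is at most unique.

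The heart of the argument is to prove that this prescription is well defined, i.e.\ that every relation among the $[\mathcal{L}_{\mathbf{y}}]$ inside $K(\mathcal{Q})$ is sent to zero in $\mathbf{f}_{\mathcal{A}}$. The crucial input here is the geometric version of the quantum Serre relation: for $i\neq j$, one exhibits an explicit isomorphism (or, more carefully, an equality in the Grothendieck group) of the form
\[
\sum_{k=0}^{1-a_{ij}}(-1)^k\,\mathcal{L}_{i,k}\ast\mathcal{L}_{j,1}\ast\mathcal{L}_{i,1-a_{ij}-k}\;=\;0
\]
in $K(\mathcal{Q})$, proved by decomposing the relevant convolution via the decomposition theorem and matching simple summands in pairs with opposite signs. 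Combined with the multiplicativity of $\ast$ and the fact that $\mathbf{f}_{\mathcal{A}}$ is by definition the free associative $\mathcal{A}$-algebra on the divided powers $\theta_i^{(a)}$ modulo exactly these relations, this yields a surjective $\mathcal{A}$-algebra homomorphism $\lambda_{\mathcal{A}}:K(\mathcal{Q})\to\mathbf{f}_{\mathcal{A}}$ with $\lambda_{\mathcal{A}}(\mathcal{L}_{\mathbf{y}})=\theta_{\mathbf{y}}$.

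Next I would verify the intertwining property $(\lambda_{\mathcal{A}}\otimes\lambda_{\mathcal{A}})\circ\mathrm{res}=r\circ\lambda_{\mathcal{A}}$. It suffices to check it on the generators $\mathcal{L}_{i,1}$, for which $\mathrm{res}(\mathcal{L}_{i,1})=\mathcal{L}_{i,1}\otimes\mathbf{1}+\mathbf{1}\otimes\mathcal{L}_{i,1}$ by direct geometric inspection of the one-dimensional vertex case; the general case follows because both $\mathrm{res}$ (by the preceding Corollary) and $r$ are algebra homomorphisms with respect to the twisted multiplication, and the twisting factor $v^{(|y|,|x'|)}$ matches the dimension shift produced by the Euler form appearing in $\mathrm{Res}$ of a product.

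The main obstacle is injectivity. Here I would argue $\nu$-by-$\nu$: in each graded piece $K(\mathcal{Q}_{\mathbf{V}})$ the classes $\{[\mathcal{L}]\mid\mathcal{L}\in\mathcal{P}_{\mathbf{V}}\}$ form an $\mathcal{A}$-basis by the decomposition theorem and the semisimplicity of $\mathcal{Q}_{\mathbf{V}}$, while the images $\lambda_{\mathcal{A}}([\mathcal{L}])$ can be matched via a triangularity argument with Lusztig's canonical basis of $\mathbf{f}_{\mathcal{A},\nu}$. For this matching one uses (i) that $\mathbf{f}_{\mathcal{A},\nu}$ is a finitely generated free $\mathcal{A}$-module of rank equal to $|\mathcal{P}_{\mathbf{V}}|$, established via the nondegenerate bilinear form on $\mathbf{f}$ constructed from $r$, and (ii) the unitriangular transition between the monomial basis $\{\theta_{\mathbf{y}}\}$ and the canonical basis. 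Once ranks and a unitriangular change of basis coincide on both sides, surjectivity upgrades to an isomorphism of free $\mathcal{A}$-modules, hence of $\mathcal{A}$-algebras. I expect the only subtle point in this last step to be keeping track of the Tate twists and shifts so that the normalization $v^{\pm}[\mathcal{L}]=[\mathcal{L}[\pm1](\pm\tfrac{1}{2})]$ really corresponds to multiplication by $v^{\pm1}$ in $\mathbf{f}_{\mathcal{A}}$.
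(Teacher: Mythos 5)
You should first note that the paper itself gives no proof of this statement: it is quoted from Lusztig, so your sketch has to be measured against Lusztig's argument rather than against anything in the text. Measured that way, there are two genuine gaps. First, the well-definedness step runs in the wrong direction. Verifying a geometric quantum Serre relation among the classes $[\mathcal{L}_{i,k}]$ in $K(\mathcal{Q})$ would show that the assignment $\theta_{\mathbf{y}}\mapsto[\mathcal{L}_{\mathbf{y}}]$ descends to a homomorphism \emph{from} an algebra presented by those relations \emph{into} $K(\mathcal{Q})$; it does not produce a homomorphism $K(\mathcal{Q})\rightarrow\mathbf{f}_{\mathcal{A}}$, because for that you would need to know that \emph{every} $\mathcal{A}$-linear relation among the $[\mathcal{L}_{\mathbf{y}}]$ inside $K(\mathcal{Q})$ is a consequence of the Serre relations, which is essentially the content of the theorem. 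Moreover $\mathbf{f}_{\mathcal{A}}$ is not, by definition, the free $\mathcal{A}$-algebra on the divided powers modulo the Serre relations; it is the $\mathcal{A}$-subalgebra of $\mathbf{f}$ generated by the $\theta_i^{(a)}$, and an integral presentation needs the higher divided-power Serre relations, so even the surjectivity you claim does not follow from the relations you propose to check.

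Second, the injectivity step is circular: in general symmetric Kac--Moody type the canonical basis of $\mathbf{f}_{\mathcal{A},\nu}$, the unitriangular transition to the monomials $\theta_{\mathbf{y}}$, and the equality of $\mathrm{rank}\,\mathbf{f}_{\mathcal{A},\nu}$ with $|\mathcal{P}_{\mathbf{V}}|$ are all \emph{obtained from} the isomorphism being proved, so they cannot be used as inputs (in finite type one could substitute PBW bases, but the theorem is stated for an arbitrary symmetric Cartan datum). Lusztig's actual route avoids both problems: the classes $[\mathcal{L}_{\mathbf{y}}]$ generate $K(\mathcal{Q})$, giving a surjection from the free algebra; $\mathrm{res}$ and the geometric pairing $\{\mathcal{L},\mathcal{L}'\}$ are shown to match the coproduct $r$ and the standard bilinear form; and the almost-orthonormality $\{\mathcal{L},\mathcal{L}'\}\in\delta_{\mathcal{L},\mathcal{L}'}+v^{-1}\mathbb{Z}[[v^{-1}]]$ for simple perverse sheaves makes the induced form on $K(\mathcal{Q})$ nondegenerate, so the kernel of the surjection is exactly the radical of the form; combined with the identification of that radical with the ideal generated by the quantum Serre relations, this yields the isomorphism and its integral form. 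Your verification of the compatibility of $\mathrm{res}$ with $r$ on the one-vertex generators is fine and is indeed where the twist $v^{(|y|,|x'|)}$ gets matched, but the Serre-relation and canonical-basis steps should be replaced by this inner-product argument.
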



\section{Geometric realization of the quantum group $\mathbf{U}$}

In this section, we shall define a skew-Hopf pairing and show that a quotient of the Drinfeld double of this skew-Hopf pairing is isomorphic to the quantum group $\mathbf{U}$.

Let $Q$ be a quiver and fix a Cartan datum $(A,\Pi,\Pi^{\vee},P,P^{\vee})$ with $A=(a_{ij})$, where $a_{ij}=\#\{i\rightarrow j\}+\#\{j\rightarrow i\}$.
Let $\mathbf{U}$ be the quantum group corresponding to this Cartan datum.

Let $\mathbf{K}=\bigoplus_{\mu\in P^{\vee}}\mathcal{A}\mathbf{k}_{\mu}$ be the torus algebra and set
$\tilde{K}(\mathcal{Q})^+$ be the free $\mathcal{A}$-module with the basis $\{\mathbf{k}_{\mu}[\mathcal{L}]^+\mid\mu\in P^{\vee}, [\mathcal{L}]\in \mathbf{B}\}.$
The Hopf algebra structure of $\tilde{K}(\mathcal{Q})^+$ is given by the following operations (\cite{xiao1997drinfeld,Xiao_Xu_Zhao_Ringel_Hall_algebras_beyond_their_quantum_groups_I}).
\begin{enumerate}
\item[(a)]The multiplication is defined as
    \begin{enumerate}
    \item[(1)]$[\mathcal{L}_1]^+[\mathcal{L}_2]^+=[\mathcal{L}_1\ast\mathcal{L}_2]^+$ for $\mathcal{L}_i\in\mathcal{Q}_{\mathbf{V}_i}$,
    \item[(2)]$\mathbf{k}_{\mu}[\mathcal{L}]^+\mathbf{k}_{-\mu}=v^{\alpha(\mu)}[\mathcal{L}]^+$ for $\mu\in P^{\vee}$ and $\mathcal{L}\in\mathcal{Q}_{\mathbf{V}}$ with $\underline{\dim}\mathbf{V}=\alpha$,
    \item[(3)]$\mathbf{k}_{\mu}\mathbf{k}_{\mu'}=\mathbf{k}_{\mu+\mu'}$ for all $\mu,\mu'\in P^{\vee}$.
    \end{enumerate}
\item[(b)]The comultiplication is defined as
    \begin{enumerate}
    \item[(1)]$\tilde{\Delta}([\mathcal{L}]^+)=\sum_{\nu'+\nu''=\nu}\mathrm{res}^{\mathbf{V}}_{\mathbf{V}',\mathbf{V}''}[\mathcal{L}]^+(\mathbf{k}_{\mu''}\otimes \mathbf{1})$ for $\mathcal{L}\in\mathcal{Q}_{\mathbf{V}}$, where $\underline{\dim}\mathbf{V}=\nu$, $\underline{\dim}\mathbf{V}'=\nu'$ and $\underline{\dim}\mathbf{V}''=\nu''$,
    \item[(2)]$\tilde{\Delta}(\mathbf{k}_{\mu})=\mathbf{k}_{\mu}\otimes\mathbf{k}_{\mu}$ for all $\mu\in P^{\vee}$.
    \end{enumerate}
\item[(c)]The antipode is defined as
    \begin{enumerate}
    \item[(1)]$\tilde{S}([\mathcal{L}]^+)=\sum_{r\geq1}(-1)^r\sum_{\nu_1+\cdots+\nu_r=\nu}\mathbf{k}_{-\nu}
    \mathrm{ind}^{\mathbf{V}}_{\mathbf{V}_1,\dots,\mathbf{V}_r}
    \mathrm{res}^{\mathbf{V}}_{\mathbf{V}_1,\dots,\mathbf{V}_r}[\mathcal{L}]^+$
    for $0\not\cong\mathcal{L}\in\mathcal{Q}_{\mathbf{V}}$, where $\underline{\dim}\mathbf{V}=\nu$, $\underline{\dim}\mathbf{V}_l=\nu_l$,
    \item[(2)]$\tilde{S}(\mathbf{k}_{\mu})=\mathbf{k}_{-\mu}$ for all $\mu\in P^{\vee}$.
    \end{enumerate}
\end{enumerate}

Set $\tilde{K}(\mathcal{Q})^-$ be the free $\mathcal{A}$-module with the basis $\{\mathbf{k}_{\mu}[\mathcal{L}]^-\mid\mu\in P^{\vee}, [\mathcal{L}]\in \mathbf{B}\}.$
The Hopf algebra structure of $\tilde{K}(\mathcal{Q})^-$ is given by the following operations.
\begin{enumerate}
\item[(a)]The multiplication is defined as
    \begin{enumerate}
    \item[(1)]$[\mathcal{L}_1]^-[\mathcal{L}_2]^-=[\mathcal{L}_1\ast\mathcal{L}_2]^-$ for $\mathcal{L}_i\in\mathcal{Q}_{\mathbf{V}_i}$,
    \item[(2)]$\mathbf{k}_{\mu}[\mathcal{L}]^-\mathbf{k}_{-\mu}=v^{-\alpha(\mu)}[\mathcal{L}]^-$ for $\mu\in P^{\vee}$ and $\mathcal{L}\in\mathcal{Q}_{\mathbf{V}}$ with $\underline{\dim}\mathbf{V}=\alpha$,
    \item[(3)]$\mathbf{k}_{\mu}\mathbf{k}_{\mu'}=\mathbf{k}_{\mu+\mu'}$ for all $\mu,\mu'\in P^{\vee}$.
    \end{enumerate}
\item[(b)]The comultiplication is defined as
    \begin{enumerate}
    \item[(1)]$\tilde{\Delta}([\mathcal{L}]^-)=\sum_{\nu'+\nu''=\nu}(\mathbf{1}\otimes \mathbf{k}_{-\mu''})(\mathrm{res}^{\mathbf{V}}_{\mathbf{V}',\mathbf{V}''})^{op}[\mathcal{L}]^-$ for $\mathcal{L}\in\mathcal{Q}_{\mathbf{V}}$, where $\underline{\dim}\mathbf{V}=\nu$, $\underline{\dim}\mathbf{V}'=\nu'$ and $\underline{\dim}\mathbf{V}''=\nu''$,
    \item[(2)]$\tilde{\Delta}(\mathbf{k}_{\mu})=\mathbf{k}_{\mu}\otimes\mathbf{k}_{\mu}$ for all $\mu\in P^{\vee}$.
    \end{enumerate}
\item[(c)]The antipode is defined as
    \begin{enumerate}
    \item[(1)]$\tilde{S}([\mathcal{L}]^-)=\sum_{r\geq1}(-1)^r\sum_{\nu_1+\cdots+\nu_r=\nu}
    \mathrm{ind}^{\mathbf{V}}_{\mathbf{V}_1,\dots,\mathbf{V}_r}
    (\mathrm{res}^{\mathbf{V}}_{\mathbf{V}_1,\dots,\mathbf{V}_r})^{op}
    [\mathcal{L}]^-\mathbf{k}_{\nu}$
    for $0\not\cong\mathcal{L}\in\mathcal{Q}_{\mathbf{V}}$, where $\underline{\dim}\mathbf{V}=\nu$, $\underline{\dim}\mathbf{V}_l=\nu_l$,
    \item[(2)]$\tilde{S}(\mathbf{k}_{\mu})=\mathbf{k}_{-\mu}$ for all $\mu\in P^{\vee}$.
    \end{enumerate}
\end{enumerate}

Fix an $I$-graded $\mathbb{K}$-vector space $\bfV$ with dimension vector $\nu\in\bbN I$. Given $\cL,\cL'\in \mathcal{Q}_{\mathbf{V}}$, let
$$
\{\cL,\cL'\}=\sum_{t\in\bbZ}\mathrm{dim} H^t_{G_{\bfV}}(\cL\otimes\cL',E_{\bfV})v^t.
$$
This definition can be extended to define a bilinear form $\varphi: \tilde{K}(\mathcal{Q})^+\times\tilde{K}(\mathcal{Q})^-\rightarrow \mathbb{Q}(v)$ by setting
$$
\varphi(\mathbf{k}_{\alpha}[\cL]^+,\mathbf{k}_{\beta}[\cL']^-)=v^{-(\alpha,\beta)-\nu(\beta)+\nu'(\alpha)}\{\cL,\cL'\}
$$
for $\cL\in\mathcal{Q}_{\mathbf{V}}$ and $\cL'\in\mathcal{Q}_{\mathbf{V}'}$ such that $\underline{\dim}\mathbf{V}=\nu$ and $\underline{\dim}\mathbf{V}'=\nu'$.

\begin{proposition}[\cite{Xiao_Xu_Zhao_Ringel_Hall_algebras_beyond_their_quantum_groups_I}]
The triple $(\tilde{K}(\mathcal{Q})^+,\tilde{K}(\mathcal{Q})^-,\varphi)$ is a skew-Hopf pairing.
\end{proposition}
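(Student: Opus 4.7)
The plan is to verify each defining axiom of a skew-Hopf pairing separately: the unit-counit compatibility, multiplicativity of $\varphi$ in each argument against the coproduct on the other side, and compatibility with the antipodes. Since $\tilde{K}(\mathcal{Q})^{\pm}$ are free $\mathcal{A}$-modules on the bases $\{\mathbf{k}_\mu[\mathcal{L}]^{\pm}\}$, it suffices to verify each axiom on pure basis elements and extend bilinearly. The torus part $\mathbf{K}$ is a commutative group-like Hopf subalgebra whose pairing values are determined by the exponential coming from the prefactor $v^{-(\alpha,\beta)}$ in the definition of $\varphi$, so the real content lies in the pairing of classes $[\mathcal{L}]^+$ against $[\mathcal{L}']^-$.

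The geometric heart of the argument is Lusztig's adjunction between the induction and restriction functors at the level of the equivariant hypercohomology pairing $\{\cdot,\cdot\}$: for $\mathcal{L}_j\in\mathcal{Q}_{\mathbf{V}_j}$ with $\underline{\dim}\mathbf{V}_1+\underline{\dim}\mathbf{V}_2=\underline{\dim}\mathbf{V}$ and $\mathcal{M}\in\mathcal{Q}_{\mathbf{V}}$, base change combined with the projection formula applied to Lusztig's standard diagram defining $\mathrm{Ind}$ and $\mathrm{Res}$ yields an identity of the shape
$$\{\mathrm{Ind}^{\mathbf{V}}_{\mathbf{V}_1,\mathbf{V}_2}(\mathcal{L}_1,\mathcal{L}_2),\mathcal{M}\}=v^{c}\{\mathcal{L}_1\otimes\mathcal{L}_2,\mathrm{Res}^{\mathbf{V}}_{\mathbf{V}_1,\mathbf{V}_2}(\mathcal{M})\},$$
where $c$ is an explicit integer recorded by the relative dimensions of the flag bundles. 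Expanding $\tilde{\Delta}([\mathcal{M}]^-)$ and using $[\mathcal{L}_1]^+[\mathcal{L}_2]^+=[\mathcal{L}_1\ast\mathcal{L}_2]^+$, the first multiplicativity axiom $\varphi(xy,a)=\varphi(x\otimes y,\tilde{\Delta}(a))$ reduces to a matching of $v$-prefactors coming from three distinct sources: (i) the scalar $v^{-(\alpha,\beta)-\nu(\beta)+\nu'(\alpha)}$ in the definition of $\varphi$, (ii) the $\mathbf{k}_{\mu''}$ insertion in $\tilde{\Delta}$, and (iii) the commutation rule $\mathbf{k}_\mu[\mathcal{L}]^{\pm}\mathbf{k}_{-\mu}=v^{\pm\alpha(\mu)}[\mathcal{L}]^{\pm}$. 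The second multiplicativity axiom follows by the symmetric argument, now involving the opposite $\mathrm{res}$ appearing in the coproduct on $\tilde{K}(\mathcal{Q})^-$.

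The unit-counit compatibility is immediate from the observation that $\{\mathcal{L},\mathbf{1}\}=0$ whenever $\underline{\dim}\mathcal{L}\neq 0$, by the degree decomposition of equivariant cohomology. The antipode axiom $\varphi(\tilde{S}(x),a)=\varphi(x,\tilde{S}^{-1}(a))$ is then a formal consequence of the preceding axioms together with uniqueness of convolution inverses, once $\tilde{S}$ is known to be the antipode on each side (a standard induction on $|\nu|$ using the explicit alternating sum formula). The main obstacle is the bookkeeping in the multiplicativity step: the twists appearing in $\tilde{\Delta}^{\pm}$ and the prefactor in $\varphi$ have been engineered precisely so that the three sources of $v$-prefactors cancel, but verifying this cancellation requires careful matching of the bilinear form $(\alpha,\beta)$ on the root lattice against the Tate twist shifts recorded by the dimensions of iterated flag varieties.
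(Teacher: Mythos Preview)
The paper does not actually prove this proposition: it is stated with a citation to \cite{Xiao_Xu_Zhao_Ringel_Hall_algebras_beyond_their_quantum_groups_I} and no argument is given. So there is no ``paper's own proof'' to compare against; the result is imported wholesale from the cited reference.

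Your outline is the standard route one would take to establish such a statement, and it matches in spirit what is done in the cited paper: verify the pairing axioms on basis elements, reduce the multiplicativity axioms to the geometric adjunction
\[
\{\mathrm{Ind}^{\mathbf{V}}_{\mathbf{V}_1,\mathbf{V}_2}(\mathcal{L}_1,\mathcal{L}_2),\mathcal{M}\}
= v^{c}\,\{\mathcal{L}_1\boxtimes\mathcal{L}_2,\mathrm{Res}^{\mathbf{V}}_{\mathbf{V}_1,\mathbf{V}_2}(\mathcal{M})\}
\]
(which is essentially Lusztig's computation of the inner product via induction/restriction, cf.\ \cite{Lusztig_Introduction_to_quantum_groups}), and then check that the torus twists $\mathbf{k}_\mu$ inserted into $\tilde{\Delta}^{\pm}$ are precisely what is needed to make the $v$-prefactors in $\varphi$ line up. Your remark that the antipode axiom follows formally from the others by convolution-inverse uniqueness is also the usual shortcut. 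As you correctly diagnose, the only nontrivial point is the bookkeeping of the three sources of $v$-powers in the multiplicativity step; this is exactly where the specific shape of the prefactor $v^{-(\alpha,\beta)-\nu(\beta)+\nu'(\alpha)}$ and the choice of $(\mathbf{k}_{\mu''}\otimes\mathbf{1})$ versus $(\mathbf{1}\otimes\mathbf{k}_{-\mu''})$ in the two coproducts earn their keep. If you were writing this up in full, that cancellation is the one place where you would need to display the computation rather than assert it.
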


Let $D(\tilde{K}(\mathcal{Q})^+,\tilde{K}(\mathcal{Q})^-)$ be the Drinfeld double of this skew-Hopf pairing and
$DK(\mathcal{Q})=DK(\mathcal{Q})(Q)$ be the quotient of $D(\tilde{K}(\mathcal{Q})^+,\tilde{K}(\mathcal{Q})^-)$ module the Hopf ideal generated by $\mathbf{k}_{\mu}\otimes\mathbf{1}-\mathbf{1}\otimes\mathbf{k}_{\mu}$ for all $\mu\in P^{\vee}$.
It is clear that $DK(\mathcal{Q})$ has the following triangular decomposition
\begin{equation}\label{tri-decom}
DK(\mathcal{Q})\cong {{K}(\mathcal{Q})^-}\otimes{\mathbf{K}}\otimes{{K}(\mathcal{Q})^+}.
\end{equation}

Theorem \ref{theorem:3.1} and the construction of $DK(\mathcal{Q})$ imply the following theorem.

\begin{theorem}
There exists an isomorphism of Hopf algebras
$$\lambda_{\mathcal{A}}:DK(\mathcal{Q})\rightarrow\mathbf{U}_{\mathcal{A}}$$
such that $\lambda_{\mathcal{A}}([\mathcal{L}_{\mathbf{y}}]^+)=\theta_{\mathbf{y}}^+$,
$\lambda_{\mathcal{A}}([\mathcal{L}_{\mathbf{y}}]^-)=\theta_{\mathbf{y}}^-$ and
$\lambda_{\mathcal{A}}(\mathbf{k}_{\mu})=K_{\mu}$ for all $\mathbf{y}\in Y_{\nu}$ and $\mu\in P^{\vee}$,
where $\mathbf{U}_{\mathcal{A}}$ is the integral form of $\mathbf{U}$.
\end{theorem}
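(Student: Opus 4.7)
The plan is to lift Lusztig's algebra isomorphism $\lambda_{\mathcal{A}}:K(\mathcal{Q})\to\mathbf{f}_{\mathcal{A}}$ of Theorem \ref{theorem:3.1} to each of the two Borel halves, match these halves with $\mathbf{U}^{\geq 0}_{\mathcal{A}}$ and $\mathbf{U}^{\leq 0}_{\mathcal{A}}$ as Hopf algebras, and then invoke the universal property of the Drinfeld double. Since $\mathbf{U}_{\mathcal{A}}$ is itself the quotient of the Drinfeld double of its two Borel halves (with respect to the standard skew-Hopf pairing) by the Hopf ideal identifying the two torus copies, the theorem reduces to matching the skew-Hopf pairing $\varphi$ with the standard one on $\mathbf{U}^{\geq 0}_{\mathcal{A}}\times\mathbf{U}^{\leq 0}_{\mathcal{A}}$.

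First, I define $\tilde{\lambda}^{+}:\tilde{K}(\mathcal{Q})^+\to\mathbf{U}^{\geq 0}_{\mathcal{A}}$ on generators by $\mathbf{k}_{\mu}\mapsto K_{\mu}$ and $[\mathcal{L}]^+\mapsto\lambda_{\mathcal{A}}([\mathcal{L}])^+$, and analogously $\tilde{\lambda}^{-}$. Algebra compatibility on the $[\mathcal{L}]^+[\mathcal{L}']^+$ part is Theorem \ref{theorem:3.1}, while the commutation relations $\mathbf{k}_{\mu}[\mathcal{L}]^+\mathbf{k}_{-\mu}=v^{\alpha(\mu)}[\mathcal{L}]^+$ match the quantum group relation (2) of $\mathbf{U}$ verbatim. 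Coalgebra compatibility follows because $\tilde{\Delta}$ on $\tilde{K}(\mathcal{Q})^+$ is built from $\mathrm{res}$ twisted by $\mathbf{k}_{\mu''}$, which under the second assertion of Theorem \ref{theorem:3.1} becomes Lusztig's $r$ twisted by $K_{\mu''}$, i.e.\ precisely $\Delta(E_i)=E_i\otimes\mathbf{1}+K_i\otimes E_i$ and its iterates. Antipode compatibility is automatic once the multiplication and comultiplication agree, since the antipode on a connected graded Hopf algebra is determined by them. The same argument, using the opposite restriction, yields $\tilde{\lambda}^{-}$ as a Hopf algebra isomorphism.

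Next, I verify that under $(\tilde{\lambda}^+,\tilde{\lambda}^-)$ the pairing $\varphi$ is carried to the standard skew-Hopf pairing $\langle-,-\rangle$ on $\mathbf{U}^{\geq 0}_{\mathcal{A}}\times\mathbf{U}^{\leq 0}_{\mathcal{A}}$. By uniqueness, such a pairing is determined by its values on a generating set once the compatibility with $\tilde{\Delta}$ is in place, so it suffices to check $\varphi(\mathbf{k}_{\mu},\mathbf{k}_{\mu'})=v^{-(\mu,\mu')}$ (built into the definition via the prefactor) and $\varphi([\mathcal{L}_i]^+,[\mathcal{L}_j]^-)=\delta_{ij}/(v-v^{-1})$. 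The latter reduces to computing $\{\mathcal{L}_i,\mathcal{L}_j\}$ for the simple perverse sheaves supported on $E_{\mathbf{V}}$ with $\underline{\dim}\mathbf{V}=i$, a one-dimensional hypercohomology calculation; the required normalization is already supplied by the factor $v^{-(\alpha,\beta)-\nu(\beta)+\nu'(\alpha)}$.

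Finally, functoriality of the Drinfeld double applied to the matched skew-Hopf pairings yields an isomorphism $D(\tilde{K}(\mathcal{Q})^+,\tilde{K}(\mathcal{Q})^-)\cong D(\mathbf{U}^{\geq 0}_{\mathcal{A}},\mathbf{U}^{\leq 0}_{\mathcal{A}})$, and the Hopf ideal generated by $\mathbf{k}_{\mu}\otimes\mathbf{1}-\mathbf{1}\otimes\mathbf{k}_{\mu}$ maps isomorphically to its counterpart on the right. Passing to the quotient produces the desired $\lambda_{\mathcal{A}}$; the triangular decomposition (\ref{tri-decom}) matches that of $\mathbf{U}_{\mathcal{A}}$, guaranteeing bijectivity, and the prescribed values on $[\mathcal{L}_{\mathbf{y}}]^{\pm}$ and $\mathbf{k}_{\mu}$ are forced by construction. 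The main obstacle is the normalization bookkeeping in matching $\varphi$ with $\langle-,-\rangle$ — the shifts $d_{\mathbf{y}}$, Tate twists, and the prefactor in $\varphi$ must conspire so that the cross-commutator recovered from the Drinfeld double is exactly $[E_i,F_j]=\delta_{ij}(K_i-K_{-i})/(v-v^{-1})$ — but this verification is essentially the content of the parallel computation already carried out in \cite{Xiao_Xu_Zhao_Ringel_Hall_algebras_beyond_their_quantum_groups_I}.
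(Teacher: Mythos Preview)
Your proposal is correct and is precisely the approach the paper intends: the paper's entire proof is the single sentence ``Theorem \ref{theorem:3.1} and the construction of $DK(\mathcal{Q})$ imply the following theorem,'' and your argument is exactly the unpacking of that sentence --- lift $\lambda_{\mathcal{A}}$ to the two Borel halves, match the skew-Hopf pairing (citing \cite{Xiao_Xu_Zhao_Ringel_Hall_algebras_beyond_their_quantum_groups_I} for the normalization, as the paper does), and apply functoriality of the Drinfeld double together with the quotient by the torus identification.
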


\section{Structure of the $\mathcal{A}$-module $K(\mathcal{Q})$}

For the geometric definition of Lusztig's symmetries, we shall study the structure of the $\mathcal{A}$-module $K(\mathcal{Q})$ in this section.

\subsection{}

Let $Q=(I,H,s,t)$ be a quiver. Assume that $i\in I$ is a sink.
Let $\bfV$ be a finite dimensional $I$-graded $\bbK$-vector space such that $\underline{\dim}\bfV=\nu\in\bbN I$.
For any $r\in\mathbb{Z}_{\geq 0}$, consider a subvariety ${_iE}_{\bfV,r}$ of $E_\bfV$
$$
{_iE}_{\bfV,r}=\{x\in E_{\bfV}\,\,|\,\,Im(\bigoplus_{\rho\in H,t(\rho)=i}x_{\rho}) \textrm{ has codimension $r$ in $V_i$}\}.
$$

Denote by ${_ij}_{\bfV,r}:{_iE}_{\bfV,r}\rightarrow E_{\bfV}$ the natural embedding.
Let $\cD_{G_\bfV}^b({_iE}_{\bfV,r})$ be the $G_\bfV$-equivariant bounded derived category of ${\overline{\bbQ}_l}$-constructible complexes on ${_iE}_{\bfV,r}$.
Naturally, we have two functors
$({_ij}_{\mathbf{V},r})_!:\mathcal{D}_{G_{\mathbf{V}}}({_iE}_{\bfV,r})\rightarrow \mathcal{D}_{G_{\mathbf{V}}}(E_{\mathbf{V}})$ and
${_ij}^\ast_{\mathbf{V},r}:\mathcal{D}_{G_{\mathbf{V}}}(E_{\mathbf{V}})\rightarrow\mathcal{D}_{G_{\mathbf{V}}}({_iE}_{\bfV,r})$.

Note that the variety $E_\bfV$ is a disjoint union of ${_iE}_{\bfV,r}$ for all $r\geq0$. For any $r\geq0$, let ${_iE}_{\bfV,\geq r}=\cup_{r'\geq r}\,\,{_iE}_{\bfV,r'}$ and ${_iE}_{\bfV,\leq r}=\cup_{r'\leq r}\,\,{_iE}_{\bfV,r'}$.
Let $i_{\bfV,\geq r}:{_iE}_{\bfV,\geq r}\rightarrow E_{\bfV}$ be the natural closed embedding and ${_ij}_{\bfV,\leq r}:{_iE}_{\bfV,\leq r}\rightarrow E_{\bfV}$ be the natural open embedding.

For any $\mathbf{y}=(\mathbf{i},\mathbf{a})\in{Y}_{\nu}$, let
\begin{displaymath}
{{_i\tilde{F}}_{\mathbf{y},r}}=\{(x,\phi)\in {_iE}_{\bfV,r}\times F_{\mathbf{y}}\,\,|\,\,\textrm{$\phi$ is $x$-stable}\}
\end{displaymath}
and
${_i\pi}_{\mathbf{y},r}:{_i\tilde{F}}_{\mathbf{y},r}\rightarrow {_iE}_{\bfV,r}$
be the projection to ${_iE}_{\bfV,r}$.

For any $\mathbf{y}\in{Y}_{\nu}$,
${_i\mathcal{L}}_{\mathbf{y},r}=({_i\pi}_{\mathbf{y},r})_!(\mathbf{1}_{{_i\tilde{F}}_{\mathbf{y},r}})[d_{\mathbf{y}}](\frac{d_{\mathbf{y}}}{2})\in\mathcal{D}_{G_{\mathbf{V}}}({_iE}_{\bfV,r})$ is a semisimple perverse sheaf.
Let ${_i\mathcal{P}}_{\mathbf{V},r}$ be the set of simple perverse sheaves $\mathcal{L}$ on ${_iE}_{\bfV,r}$ such that $\mathcal{L}[s]$ appears as a direct summand of ${_i\mathcal{L}}_{\mathbf{y},r}$ for some $\mathbf{y}\in {Y}_{\nu}$ and $s\in\mathbb{Z}$. Let ${_i\mathcal{Q}}_{\mathbf{V},r}$ be the full subcategory of $\mathcal{D}_{G_{\mathbf{V}}}({_iE}_{\bfV,r})$ consisting of all complexes which are isomorphic to finite direct sums of complexes in the set $\{\mathcal{L}[s]\,\,|\,\,\mathcal{L}\in{_i\mathcal{P}}_{\mathbf{V},r},s\in\mathbb{Z}\}$.

Let $K({_i\mathcal{Q}}_{\mathbf{V},r})$ be the Grothendieck group of ${_i\mathcal{Q}}_{\mathbf{V},r}$
and
define $v^{\pm}[\mathcal{L}]=[\mathcal{L}[\pm1](\pm\frac{1}{2})]$ for any $\mathcal{L}\in{_i\mathcal{Q}}_{\mathbf{V},r}$.
Then, $K({_i\mathcal{Q}}_{\mathbf{V},r})$ is a free $\mathcal{A}$-module.
Let
$$K({_i\mathcal{Q}}_{r})=\bigoplus_{\nu\in\mathbb{N}I}K({_i\mathcal{Q}}_{\mathbf{V},r}).$$
Let ${_i\mathbf{B}}_{\mathbf{V},r}=\{[\mathcal{L}]\,\,|\,\,\mathcal{L}\in{_i\mathcal{P}}_{\mathbf{V},r}\}$ and
${_i\mathbf{B}}_{r}=\bigcup_{\nu}{_i\mathbf{B}}_{\mathbf{V},r}$, which is an $\mathcal{A}$-basis of $K({_i\mathcal{Q}}_{r})$.

\begin{lemma}\label{Q}
For any $I$-graded $\mathbb{K}$-vector space $\mathbf{V}$ and $0\leq r\in\mathbb{Z}$, we have $${_ij}^{\ast}_{\mathbf{V},r}(\mathcal{Q}_{\mathbf{V}})={_i\mathcal{Q}}_{\mathbf{V},r}.$$
\end{lemma}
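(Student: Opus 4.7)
The plan is to deduce the equality of categories from a single identification at the level of the distinguished generators, namely ${_ij}^{\ast}_{\mathbf{V},r}(\mathcal{L}_{\mathbf{y}})\cong {_i\mathcal{L}}_{\mathbf{y},r}$ for every $\mathbf{y}\in Y_{\nu}$. Both $\mathcal{Q}_{\mathbf{V}}$ and ${_i\mathcal{Q}}_{\mathbf{V},r}$ are Krull--Schmidt additive categories generated, under finite direct sums, shifts, and direct summands, by the families $\{\mathcal{L}_{\mathbf{y}}\}_{\mathbf{y}\in Y_\nu}$ and $\{{_i\mathcal{L}}_{\mathbf{y},r}\}_{\mathbf{y}\in Y_\nu}$ respectively; a matching of these generators under ${_ij}^{\ast}_{\mathbf{V},r}$ therefore forces a matching of the two categories.

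The key geometric input is the square
\[
\xymatrix{
{_i\tilde{F}}_{\mathbf{y},r}\ar[r]^{\iota}\ar[d]_{{_i\pi}_{\mathbf{y},r}} & \tilde{F}_{\mathbf{y}}\ar[d]^{\pi_{\mathbf{y}}}\\
{_iE}_{\mathbf{V},r}\ar[r]^{{_ij}_{\mathbf{V},r}} & E_{\mathbf{V}}
}
\]
which is Cartesian because ${_i\tilde{F}}_{\mathbf{y},r}=\pi_{\mathbf{y}}^{-1}({_iE}_{\mathbf{V},r})$ by definition. The map $\pi_{\mathbf{y}}$ is proper since $\tilde{F}_{\mathbf{y}}$ is a closed subvariety of $E_{\mathbf{V}}\times F_{\mathbf{y}}$ and the flag variety $F_{\mathbf{y}}$ is proper. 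Proper base change thus gives ${_ij}^{\ast}_{\mathbf{V},r}\circ(\pi_{\mathbf{y}})_{!}\cong({_i\pi}_{\mathbf{y},r})_{!}\circ\iota^{\ast}$; applying both sides to $\mathbf{1}_{\tilde{F}_{\mathbf{y}}}[d_{\mathbf{y}}](\frac{d_{\mathbf{y}}}{2})$ and using $\iota^{\ast}\mathbf{1}_{\tilde{F}_{\mathbf{y}}}=\mathbf{1}_{{_i\tilde{F}}_{\mathbf{y},r}}$, I obtain the desired identity at the level of generators.

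For the inclusion $\subseteq$, any $X\in\mathcal{Q}_{\mathbf{V}}$ is, up to isomorphism, a direct summand of a finite sum $\bigoplus \mathcal{L}_{\mathbf{y}}[t]^{n_{\mathbf{y},t}}$, since every simple of $\mathcal{P}_{\mathbf{V}}$ appears by definition as a shifted summand of some $\mathcal{L}_{\mathbf{y}}$. By the generator identity, ${_ij}^{\ast}_{\mathbf{V},r}(X)$ is then a direct summand of $\bigoplus {_i\mathcal{L}}_{\mathbf{y},r}[t]^{n_{\mathbf{y},t}}$; the decomposition theorem applied to the proper maps ${_i\pi}_{\mathbf{y},r}$ writes each ${_i\mathcal{L}}_{\mathbf{y},r}$ as a direct sum of shifts of simples in ${_i\mathcal{P}}_{\mathbf{V},r}$, so Krull--Schmidt places ${_ij}^{\ast}_{\mathbf{V},r}(X)$ in ${_i\mathcal{Q}}_{\mathbf{V},r}$. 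For the reverse inclusion, every simple $\mathcal{M}\in{_i\mathcal{P}}_{\mathbf{V},r}$ arises by construction as a shifted summand of some ${_i\mathcal{L}}_{\mathbf{y},r}={_ij}^{\ast}_{\mathbf{V},r}(\mathcal{L}_{\mathbf{y}})$. The step I expect to require the most care is lifting such a summand to a genuine preimage in $\mathcal{Q}_{\mathbf{V}}$: I would decompose each $\mathcal{L}_{\mathbf{y}}$ in $\mathcal{Q}_{\mathbf{V}}$ via Krull--Schmidt, apply the generator identity to each indecomposable summand, and match simple summands on the two sides by multiplicity and shift, thereby assembling an $X\in\mathcal{Q}_{\mathbf{V}}$ with ${_ij}^{\ast}_{\mathbf{V},r}(X)\cong Y$ for any prescribed $Y\in{_i\mathcal{Q}}_{\mathbf{V},r}$.
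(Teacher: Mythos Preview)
Your proposal is correct and follows essentially the same approach as the paper: both arguments hinge on the Cartesian square relating $\tilde{F}_{\mathbf{y}}$ and ${_i\tilde{F}}_{\mathbf{y},r}$, then apply proper base change to obtain ${_ij}^{\ast}_{\mathbf{V},r}(\mathcal{L}_{\mathbf{y}})\cong {_i\mathcal{L}}_{\mathbf{y},r}$. The paper simply asserts the category equality from this generator identity without further comment, whereas you spell out the two inclusions and correctly flag the lifting step in the $\supseteq$ direction as the one demanding the most care.
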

\begin{proof}
For any $\mathbf{y}\in{Y}_{\nu}$, we have the following fiber product
$$\xymatrix{{_i\tilde{F}}_{\mathbf{y},r}\ar[r]^-{{{_i\tilde{j}}}_{\mathbf{V},r}}\ar[d]^-{{_i\pi}_{\mathbf{y},r}}&\tilde{F}_{\mathbf{y}}
\ar[d]^-{\pi_{\mathbf{y}}}\\{_iE}_{\bfV,r}\ar[r]^-{{_ij}_{\mathbf{V},r}}&E_{\mathbf{V}}.}$$
Hence
\begin{eqnarray*}{_ij}^{\ast}_{\mathbf{V},r}\mathcal{L}_{\mathbf{y}}&=&{_ij}^{\ast}_{\mathbf{V},r}(\pi_{\mathbf{y}})_!(\mathbf{1}_{{\tilde{F}}_{\mathbf{y}}})[d_{\mathbf{y}}](\frac{d_{\mathbf{y}}}{2})
=({_i\pi}_{\mathbf{y},r})_!{{_i\tilde{j}}}^{\ast}_{\mathbf{V},r}(\mathbf{1}_{{\tilde{F}}_{\mathbf{y}}})[d_{\mathbf{y}}](\frac{d_{\mathbf{y}}}{2})\\
&=&({_i\pi}_{\mathbf{y},r})_!(\mathbf{1}_{{_i\tilde{F}}_{\mathbf{y},r}})[d_{\mathbf{y}}](\frac{d_{\mathbf{y}}}{2})={_i\mathcal{L}}_{\mathbf{y},r}.\end{eqnarray*}
That is ${_ij}^{\ast}_{\mathbf{V},r}(\mathcal{Q}_{\mathbf{V}})={_i\mathcal{Q}}_{\mathbf{V},r}$.
\end{proof}

Hence, we get an $\mathcal{A}$-linear map
${_ij}^\ast_{r}:K(\mathcal{Q})\rightarrow K({_i\mathcal{Q}}_{r})$.

\begin{theorem}\label{tri}
For any $\mathcal{L}\in\mathcal{Q}_{\mathbf{V}}$ with $\dim V_i=s$, let $\mathcal{L}_{\geq r}=(i_{\bfV,\geq r})_\ast i^\ast_{\bfV,\geq r}\mathcal{L}$
and $\mathcal{L}_{r}={_ij}^\ast_{\mathbf{V},r}\mathcal{L}\in{_i\mathcal{Q}}_{\mathbf{V},r}$. Then
there exists a distinguished triangle
\begin{displaymath}
\xymatrix{
({_ij}_{\mathbf{V},r})_!\mathcal{L}_r\ar[r]&\mathcal{L}_{\geq r}\ar[r]&\mathcal{L}_{\geq r+1}\ar[r]&
}
\end{displaymath}
for any $0\leq r\leq s$.
\end{theorem}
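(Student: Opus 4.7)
The plan is to derive the claimed triangle from the standard open/closed recollement triangle attached to the stratification of ${_iE}_{\bfV,\geq r}$ into its open stratum ${_iE}_{\bfV,r}$ and its closed complement ${_iE}_{\bfV,\geq r+1}$, and then push the result forward along the closed embedding $i_{\bfV,\geq r}\colon{_iE}_{\bfV,\geq r}\hookrightarrow E_\bfV$.

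First, I would record the elementary geometric input. The function $x\mapsto \dim V_i-\dim\im(\bigoplus_{t(\rho)=i}x_\rho)$ is upper semicontinuous on $E_\bfV$, so ${_iE}_{\bfV,\geq r}$ is closed in $E_\bfV$, and inside ${_iE}_{\bfV,\geq r}$ the subset ${_iE}_{\bfV,\geq r+1}$ is closed with open complement ${_iE}_{\bfV,r}$. Writing $j'\colon{_iE}_{\bfV,r}\hookrightarrow{_iE}_{\bfV,\geq r}$ for this open embedding and $i'\colon{_iE}_{\bfV,\geq r+1}\hookrightarrow{_iE}_{\bfV,\geq r}$ for this closed embedding, one has the tautological factorizations $i_{\bfV,\geq r}\circ j'={_ij}_{\bfV,r}$ and $i_{\bfV,\geq r}\circ i'=i_{\bfV,\geq r+1}$.

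Next, I would apply the standard open/closed distinguished triangle
\begin{displaymath}
j'_!(j')^\ast\mathcal{G}\to\mathcal{G}\to i'_\ast(i')^\ast\mathcal{G}\to
\end{displaymath}
in $\mathcal{D}_{G_\bfV}({_iE}_{\bfV,\geq r})$ with input $\mathcal{G}=i^\ast_{\bfV,\geq r}\mathcal{L}$, and then apply the triangulated functor $(i_{\bfV,\geq r})_\ast$ to pass to $E_\bfV$. Since $i_{\bfV,\geq r}$ is a closed embedding, $(i_{\bfV,\geq r})_\ast=(i_{\bfV,\geq r})_!$, so $!$-pushforwards compose and the first term rewrites as $(i_{\bfV,\geq r})_!j'_!(j')^\ast i^\ast_{\bfV,\geq r}\mathcal{L}=({_ij}_{\bfV,r})_!{_ij}^\ast_{\bfV,r}\mathcal{L}=({_ij}_{\bfV,r})_!\mathcal{L}_r$. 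The middle term is $\mathcal{L}_{\geq r}$ by definition, and the third term collapses via $i_{\bfV,\geq r}\circ i'=i_{\bfV,\geq r+1}$ and $(i')^\ast\circ i^\ast_{\bfV,\geq r}=i^\ast_{\bfV,\geq r+1}$ to $\mathcal{L}_{\geq r+1}$. This produces exactly the triangle in the statement, and the fact that $\mathcal{L}_r$ actually belongs to ${_i\mathcal{Q}}_{\bfV,r}$ so that the notation is meaningful is precisely Lemma \ref{Q}.

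The main obstacle is nothing deeper than careful bookkeeping inside the six-functor formalism: one needs the identities $(i_{\bfV,\geq r})_\ast=(i_{\bfV,\geq r})_!$, $({_ij}_{\bfV,r})_!=(i_{\bfV,\geq r})_!\circ j'_!$, and $i^\ast_{\bfV,\geq r+1}=(i')^\ast\circ i^\ast_{\bfV,\geq r}$, together with the automatic compatibility of the whole construction with the $G_\bfV$-equivariant structure. No information about the semisimplicity or perversity of $\mathcal{L}$ enters the argument, so the statement in fact holds for any object of $\mathcal{D}_{G_\bfV}(E_\bfV)$.
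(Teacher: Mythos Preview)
Your argument is correct and in fact cleaner than the paper's. Both proofs rest on the open/closed recollement triangle, but they apply it on different spaces. The paper works on the ambient variety $E_\bfV$: for each $k$ it applies the triangle for the open/closed pair $({_iE}_{\bfV,\leq k},{_iE}_{\bfV,\geq k+1})$ to the complex $\mathcal{L}_{\geq k}$, and then must separately verify that $i^\ast_{\bfV,\geq k+1}\mathcal{L}_{\geq k}=i^\ast_{\bfV,\geq k+1}\mathcal{L}$ and that ${_ij}^\ast_{\bfV,\leq k}\mathcal{L}_{\geq k}$ is supported on the single stratum ${_iE}_{\bfV,k}$, which it does via an auxiliary triangle and a support argument, packaged as an induction on $r$. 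You instead localize immediately to the closed subvariety ${_iE}_{\bfV,\geq r}$ and apply the recollement triangle for the pair $({_iE}_{\bfV,r},{_iE}_{\bfV,\geq r+1})$ to $i^\ast_{\bfV,\geq r}\mathcal{L}$; pushing forward along the closed embedding $i_{\bfV,\geq r}$ then identifies all three terms tautologically via composition of $(-)_!$ and $(-)^\ast$. This bypasses the induction and the auxiliary identifications entirely, and as you note it works for any object of $\mathcal{D}_{G_\bfV}(E_\bfV)$. The paper's route has no advantage here; yours is simply the more natural formulation.
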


\begin{proof}

By Lemma \ref{Q}, we have $\mathcal{L}_{r}={_ij}^\ast_{\mathbf{V},r}\mathcal{L}\in{_i\mathcal{Q}}_{\mathbf{V},r}$.

Then, we shall prove this theorem by induction.

By definition, $\mathcal{L}_{\geq 0}=(i_{\bfV,\geq 0})_\ast i^\ast_{\bfV,\geq 0}\mathcal{L}=\mathcal{L}$.
Since ${_iE}_{\bfV,\geq 1}$ is closed in ${E}_{\bfV}$, we have the following distinguished triangle
\begin{displaymath}
\xymatrix{
({_ij}_{\mathbf{V},0})_!{_ij}^\ast_{\mathbf{V},0}\mathcal{L}\ar[r]&\mathcal{L}_{\geq 0}\ar[r]&(i_{\bfV,\geq 1})_\ast i^\ast_{\bfV,\geq 1}\mathcal{L}\ar[r]&.
}
\end{displaymath}
That is
\begin{displaymath}
\xymatrix{
({_ij}_{\mathbf{V},0})_!\mathcal{L}_0\ar[r]&\mathcal{L}_{\geq 0}\ar[r]&\mathcal{L}_{\geq 1}\ar[r]&.
}
\end{displaymath}
Hence this theorem is true for $r=0$.

Since ${_iE}_{\bfV,\geq 2}$ is closed in ${E}_{\bfV}$, we have the following distinguished triangle
\begin{equation}\label{A}
\xymatrix{
({_ij}_{\mathbf{V},\leq 1})_!{_ij}^\ast_{\mathbf{V},\leq 1}\mathcal{L}_{\geq 1}\ar[r]&\mathcal{L}_{\geq 1}\ar[r]&(i_{\bfV,\geq 2})_\ast i^\ast_{\bfV,\geq 2}\mathcal{L}_{\geq 1}\ar[r]&.
}
\end{equation}

Consider the following distinguished triangle
\begin{displaymath}
\xymatrix{
({_ij}_{\mathbf{V},\leq 0})_!{_ij}^\ast_{\mathbf{V},\leq 0}\mathcal{L}\ar[r]&\mathcal{L}\ar[r]&\mathcal{L}_{\geq 1}\ar[r]&.
}
\end{displaymath}
Applying the functor $i^\ast_{\bfV,\geq 2}$ to this distinguished triangle,
we have
$i^\ast_{\bfV,\geq 2}\mathcal{L}_{\geq 1}=i^\ast_{\bfV,\geq 2}\mathcal{L}$. Hence
$$(i_{\bfV,\geq 2})_\ast i^\ast_{\bfV,\geq 2}\mathcal{L}_{\geq 1}=(i_{\bfV,\geq 2})_\ast i^\ast_{\bfV,\geq 2}\mathcal{L}=\mathcal{L}_{\geq 2}.$$
Applying the functor ${_ij}^\ast_{\mathbf{V},1}$ to this distinguished triangle,
we have
\begin{equation}\label{C}
{_ij}^\ast_{\mathbf{V},1}\mathcal{L}_{\geq 1}={_ij}^\ast_{\mathbf{V},1}\mathcal{L}.
\end{equation}

The complex ${_ij}^\ast_{\mathbf{V},\leq 1}\mathcal{L}_{\geq 1}$ is a complex on ${_iE}_{\mathbf{V},\leq1}$ and the support is in ${_iE}_{\mathbf{V},1}$.
Hence ${_ij}^\ast_{\mathbf{V},\leq 1}\mathcal{L}_{\geq 1}=({{_i\hat{j}}}_{\mathbf{V},1})_!{_ij}^\ast_{\mathbf{V},1}\mathcal{L}_{\geq 1}$,
where ${{_i\hat{j}}}_{\mathbf{V},1}$ is the embedding from ${_iE}_{\mathbf{V},1}$ to ${_iE}_{\mathbf{V},\leq1}$.
Then $$({_ij}_{\mathbf{V},\leq 1})_!{_ij}^\ast_{\mathbf{V},\leq 1}\mathcal{L}_{\geq 1}=({_ij}_{\mathbf{V},\leq 1})_!({{_i\hat{j}}}_{\mathbf{V},1})_!{_ij}^\ast_{\mathbf{V},1}\mathcal{L}_{\geq 1}=({{_ij}}_{\mathbf{V},1})_!{_ij}^\ast_{\mathbf{V},1}\mathcal{L}_{\geq 1}.$$
By (\ref{C}), we have $$({_ij}_{\mathbf{V},\leq 1})_!{_ij}^\ast_{\mathbf{V},\leq 1}\mathcal{L}_{\geq 1}=({{_ij}}_{\mathbf{V},1})_!{_ij}^\ast_{\mathbf{V},1}\mathcal{L}=({{_ij}}_{\mathbf{V},1})_!\mathcal{L}_1.$$

Hence the distinguished triangle (\ref{A}) can be rewrote as
\begin{displaymath}
\xymatrix{
({{_ij}}_{\mathbf{V},1})_!\mathcal{L}_1\ar[r]&\mathcal{L}_{\geq 1}\ar[r]&\mathcal{L}_{\geq 2}\ar[r]&,
}
\end{displaymath}
and this theorem is true for $r=1$.

Assume that this theorem is true for $r=k-1$.

Since ${_iE}_{\bfV,\geq k+1}$ is closed in ${E}_{\bfV}$, we have the following distinguished triangle
\begin{equation}\label{B}
\xymatrix{
({_ij}_{\mathbf{V},\leq k})_!{_ij}^\ast_{\mathbf{V},\leq k}\mathcal{L}_{\geq k}\ar[r]&\mathcal{L}_{\geq k}\ar[r]&(i_{\bfV,\geq k+1})_\ast i^\ast_{\bfV,\geq k+1}\mathcal{L}_{\geq k}\ar[r]&.
}
\end{equation}

Consider the following distinguished triangle
\begin{displaymath}
\xymatrix{
({_ij}_{\mathbf{V},\leq k-1})_!{_ij}^\ast_{\mathbf{V},\leq k-1}\mathcal{L}\ar[r]&\mathcal{L}\ar[r]&\mathcal{L}_{\geq k}\ar[r]&.
}
\end{displaymath}
Applying the functor $i^\ast_{\bfV,\geq k+1}$ to this distinguished triangle,
we have
$i^\ast_{\bfV,\geq k+1}\mathcal{L}_{\geq k}=i^\ast_{\bfV,\geq k+1}\mathcal{L}$. Hence
$$(i_{\bfV,\geq k+1})_\ast i^\ast_{\bfV,\geq k+1}\mathcal{L}_{\geq k}=(i_{\bfV,\geq k+1})_\ast i^\ast_{\bfV,\geq k+1}\mathcal{L}=\mathcal{L}_{\geq k+1}.$$
Applying the functor ${_ij}^\ast_{\mathbf{V},k}$ to this distinguished triangle,
we have
\begin{equation}\label{D}
{_ij}^\ast_{\mathbf{V},k}\mathcal{L}_{\geq k}={_ij}^\ast_{\mathbf{V},k}\mathcal{L}.
\end{equation}

The complex ${_ij}^\ast_{\mathbf{V},\leq k}\mathcal{L}_{\geq k}$ is a complex on ${_iE}_{\mathbf{V},\leq k}$ and the support is in ${_iE}_{\mathbf{V},k}$.
Hence ${_ij}^\ast_{\mathbf{V},\leq k}\mathcal{L}_{\geq k}=({{_i\hat{j}}}_{\mathbf{V},k})_!{_ij}^\ast_{\mathbf{V},k}\mathcal{L}_{\geq k}$,
where ${{_i\hat{j}}}_{\mathbf{V},k}$ is the embedding from ${_iE}_{\mathbf{V},k}$ to ${_iE}_{\mathbf{V},\leq k}$.
Then $$({_ij}_{\mathbf{V},\leq k})_!{_ij}^\ast_{\mathbf{V},\leq k}\mathcal{L}_{\geq k}=({_ij}_{\mathbf{V},\leq k})_!({{_i\hat{j}}}_{\mathbf{V},k})_!{_ij}^\ast_{\mathbf{V},k}\mathcal{L}_{\geq k}=({{_ij}}_{\mathbf{V},k})_!{_ij}^\ast_{\mathbf{V},k}\mathcal{L}_{\geq k}.$$
By (\ref{D}), we have $$({_ij}_{\mathbf{V},\leq k})_!{_ij}^\ast_{\mathbf{V},\leq k}\mathcal{L}_{\geq k}=({{_ij}}_{\mathbf{V},k})_!{_ij}^\ast_{\mathbf{V},k}\mathcal{L}=({{_ij}}_{\mathbf{V},k})_!\mathcal{L}_k.$$

Hence the distinguished triangle (\ref{B}) can be rewrote as
\begin{displaymath}
\xymatrix{
({{_ij}}_{\mathbf{V},k})_!\mathcal{L}_k\ar[r]&\mathcal{L}_{\geq k}\ar[r]&\mathcal{L}_{\geq k+1}\ar[r]&,
}
\end{displaymath}
and this theorem is true for $r=k$.

Hence, we have proved this theorem.
\end{proof}

\begin{lemma}\label{Q1}
Fix an $I$-graded $\mathbb{K}$-vector space $\mathbf{V}$ and $0\leq r\in\mathbb{Z}$. For any $\mathcal{L}\in {_i\mathcal{Q}}_{\mathbf{V},r}$, we have $[({_ij}_{\mathbf{V},r})_!(\mathcal{L})]\in K(\mathcal{Q}_{\mathbf{V}}).$
\end{lemma}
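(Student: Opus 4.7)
The plan is to lift $\mathcal{L}$ to an object $\mathcal{L}'\in\mathcal{Q}_{\mathbf{V}}$ via Lemma~\ref{Q} and then unravel $[({_ij}_{\mathbf{V},r})_!(\mathcal{L})]$ using the distinguished triangle of Theorem~\ref{tri}. Since $\mathcal{L}\in{_i\mathcal{Q}}_{\mathbf{V},r}={_ij}^{\ast}_{\mathbf{V},r}(\mathcal{Q}_{\mathbf{V}})$ by Lemma~\ref{Q}, I choose $\mathcal{L}'\in\mathcal{Q}_{\mathbf{V}}$ with $\mathcal{L}'_r:={_ij}^{\ast}_{\mathbf{V},r}\mathcal{L}'\cong\mathcal{L}$; then $[\mathcal{L}']\in K(\mathcal{Q}_{\mathbf{V}})$ tautologically, and by the same lemma each restriction $\mathcal{L}'_{r'}:={_ij}^{\ast}_{\mathbf{V},r'}\mathcal{L}'$ lies in ${_i\mathcal{Q}}_{\mathbf{V},r'}$ for every $0\leq r'\leq s:=\dim_{\mathbb{K}}V_i$.

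Next, iterating the distinguished triangle of Theorem~\ref{tri} over $r'=0,1,\ldots,s$ and passing to classes in the Grothendieck group of $\mathcal{D}_{G_{\mathbf{V}}}(E_{\mathbf{V}})$ produces the telescoping identities
$$[\mathcal{L}']=\sum_{r'=0}^{s}[({_ij}_{\mathbf{V},r'})_!\mathcal{L}'_{r'}]\quad\text{and}\quad[({_ij}_{\mathbf{V},r})_!(\mathcal{L})]=[\mathcal{L}'_{\geq r}]-[\mathcal{L}'_{\geq r+1}],$$
so the problem reduces to showing that $[\mathcal{L}'_{\geq r}]$ belongs to $K(\mathcal{Q}_{\mathbf{V}})$ for every $\mathcal{L}'\in\mathcal{Q}_{\mathbf{V}}$ and every $r$.

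I then argue by downward induction on $r$, with vacuous base case $r>s$ because ${_iE}_{\mathbf{V},r}=\emptyset$. The inductive hypothesis gives $[({_ij}_{\mathbf{V},r'})_!\mathcal{L}'_{r'}]\in K(\mathcal{Q}_{\mathbf{V}})$ for all $r'>r$, and summing these yields $[\mathcal{L}'_{\geq r+1}]\in K(\mathcal{Q}_{\mathbf{V}})$. Combined with $[\mathcal{L}']\in K(\mathcal{Q}_{\mathbf{V}})$ and the first telescoping identity, this already forces the aggregate $\sum_{r'\leq r}[({_ij}_{\mathbf{V},r'})_!\mathcal{L}'_{r'}]$ into $K(\mathcal{Q}_{\mathbf{V}})$.

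The main obstacle is to isolate the single summand $[({_ij}_{\mathbf{V},r})_!\mathcal{L}'_r]$ from its predecessors $[({_ij}_{\mathbf{V},r'})_!\mathcal{L}'_{r'}]$ for $r'<r$, which is not immediate from the downward induction alone. To close this gap, I plan to couple the induction with the companion claim ``$[\mathcal{L}'_{\geq r}]\in K(\mathcal{Q}_{\mathbf{V}})$ for every $\mathcal{L}'\in\mathcal{Q}_{\mathbf{V}}$'' and use proper base change along the closed embedding $i_{\mathbf{V},\geq r}$ to identify the pullback $i^{\ast}_{\mathbf{V},\geq r}\mathcal{L}_y$ with a pushforward from the flag subvariety $\pi_y^{-1}({_iE}_{\mathbf{V},\geq r})\subset\tilde{F}_y$; together with the boundary data $[\mathcal{L}'_{\geq 0}]=[\mathcal{L}']\in K(\mathcal{Q}_{\mathbf{V}})$ and $[\mathcal{L}'_{\geq s+1}]=0$, this should pin each stratum class down in $K(\mathcal{Q}_{\mathbf{V}})$ individually and complete the induction.
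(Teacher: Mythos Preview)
Your plan has a genuine gap at exactly the place you flag as ``the main obstacle,'' and the fix you propose does not close it. The companion claim ``$[\mathcal{L}'_{\geq r}]\in K(\mathcal{Q}_{\mathbf{V}})$ for every $\mathcal{L}'\in\mathcal{Q}_{\mathbf{V}}$'' is equivalent in strength to the lemma itself, so you need an independent reason for it. Your suggestion is to check it on the generators $\mathcal{L}_{\mathbf{y}}$ via proper base change: indeed $i^{\ast}_{\mathbf{V},\geq r}\mathcal{L}_{\mathbf{y}}\cong (\pi_{\mathbf{y}}|_{Z})_!\mathbf{1}_{Z}[d_{\mathbf{y}}](\tfrac{d_{\mathbf{y}}}{2})$ with $Z=\pi_{\mathbf{y}}^{-1}({_iE}_{\mathbf{V},\geq r})$. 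But $Z$ is in general singular and the map $Z\to E_{\mathbf{V}}$ is no longer the kind of proper map from a smooth source whose pushforward decomposes into shifts of Lusztig's simple perverse sheaves. So nothing in your outline forces $[(i_{\mathbf{V},\geq r})_{\ast}(\pi_{\mathbf{y}}|_{Z})_!\mathbf{1}_{Z}]$ to lie in the $\mathcal{A}$-span of $\mathbf{B}_{\nu}$; the ``boundary data'' $[\mathcal{L}'_{\geq 0}]=[\mathcal{L}']$ and $[\mathcal{L}'_{\geq s+1}]=0$ only constrain the total sum, not the individual strata. In short, the telescoping identities reduce the lemma to itself, and the base-change step does not supply the missing input.

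The paper avoids this circularity by leaving the sheaf side altogether. It passes to the trace maps $\chi^n:\mathcal{D}_{G_{\mathbf{V}}}(E_{\mathbf{V}})\to\underline{\mathcal{F}}^{n}_{\mathbf{V}}$ to $G_{\mathbf{V}}^{F^n}$-invariant functions on $E_{\mathbf{V}}^{F^n}$, uses Lusztig's identification $\chi^n(\mathcal{Q}_{\mathbf{V}})=\mathcal{F}^{n}_{\mathbf{V}}$, and observes that $\chi^n\bigl(({_ij}_{\mathbf{V},r})_!\mathcal{L}\bigr)$ is the extension by zero of the restriction of $\chi^n(\hat{\mathcal{L}})$ to the stratum ${_iE}^{F^n}_{\mathbf{V},r}$, where $\hat{\mathcal{L}}\in\mathcal{Q}_{\mathbf{V}}$ is your lift. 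One then argues on the Hall-algebra side that such a stratum-restricted function still lies in the composition subalgebra $\mathcal{F}^{n}_{\mathbf{V}}$, and concludes via the injectivity of $\prod_{n}\chi^n$ on $K(\mathcal{D}_{G_{\mathbf{V}}}(E_{\mathbf{V}}))$. The point is that membership in $K(\mathcal{Q}_{\mathbf{V}})$ is detected at the level of functions, where cutting to a stratum is an elementary operation inside the Hall algebra; no analogous elementary operation is available on the sheaf side, which is why your purely sheaf-theoretic induction stalls.
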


For the proof of this lemma, we need to review Lusztig's constructions of Hall algebras via functions (\cite{Lusztig_Canonical_bases_and_Hall_algebras}).
For any $I$-graded $\mathbb{K}$-vector space $\mathbf{V}$ and $1\leq n\in\mathbb{Z}$, let $E^{F^n}_\mathbf{V}$ and $G^{F^n}_{\mathbf{V}}$ be the sets consisting of the $F^n$-fixed points in $E_\mathbf{V}$ and $G_{\mathbf{V}}$ respectively, where $F$ is the Frobenius morphism.

Lusztig defined $\underline{\mathcal{F}}^n_{\mathbf{V}}$ as the set of all $G^{F^n}_{\mathbf{V}}$-invariant $\overline{\mathbb{Q}}_{l}$-functions on $E^{F^n}_\mathbf{V}$
and we can give a multiplication on $\underline{\mathcal{F}}^n=\bigoplus_{\nu\in\mathbb{N}I}\underline{\mathcal{F}}^n_{\mathbf{V}}$
to obtain the Hall algebra.
For any $i\in I$,
let $\mathbf{V}_{ti}$ be the $I$-graded $\mathbb{K}$-vector space with dimension vector $ti$ and $f_i$ be the constant function on $E^{F^n}_{\mathbf{V}_i}$ with value $1$. Denote by $\mathcal{F}^n$ the composition subalgebra of
$\underline{\mathcal{F}}^n$ generated by $f_i$
and $\mathcal{F}^n_{\mathbf{V}}=\underline{\mathcal{F}}^n_{\mathbf{V}}\cap\mathcal{F}^n$.

For any $\mathcal{L}\in\mathcal{D}_{G_{\mathbf{V}}}(E_{\mathbf{V}})$, there is a function $\chi^n_{\mathcal{L}}:E^{F^n}_\mathbf{V}\rightarrow\overline{\mathbb{Q}}_{l}$
(Section III.12 in \cite{Kiehl_Weissauer_Weil_conjectures_perverse_sheaves_and_l'adic_Fourier_transform}).
Hence, we have the trace map
\begin{eqnarray*}
\chi^n:\mathcal{D}_{G_{\mathbf{V}}}(E_{\mathbf{V}})&\rightarrow&\underline{\mathcal{F}}^{n}_{\mathbf{V}}\\
\mathcal{L}&\mapsto&\chi^n_{\mathcal{L}}.
\end{eqnarray*}
Lusztig proved that $\chi^n(\mathcal{Q}_{\mathbf{V}})=\mathcal{F}^{n}_{\mathbf{V}}$ in \cite{Lusztig_Canonical_bases_and_Hall_algebras}.

\begin{proof}[The proof of Lemma \ref{Q1}]
Similarly to the definition of $\chi^n$,
we can get a function $f_n={_r\chi}^n_{\mathcal{L}}$ on ${_iE}^{F^n}_{\bfV,r}$ for any $\mathcal{L}\in {_i\mathcal{Q}}_{\mathbf{V},r}$ and $n\in\mathbb{Z}_{\geq1}$.
The function $f_n$ can be viewed
as a function on $E^{F^n}_{\mathbf{V}}$ and ${\chi}^n_{({_ij}_{\mathbf{V},r})_!\mathcal{L}}=f_n$.

By Lemma \ref{Q}, there exists a complex $\hat{\mathcal{L}}\in \mathcal{Q}_{\mathbf{V}}$ such that ${_ij}^\ast_{\mathbf{V},r}\hat{\mathcal{L}}=\mathcal{L}$.
Note that the function $f_n$ is the restriction of ${\chi}^n_{\hat{\mathcal{L}}}$ on ${_iE}^{F^n}_{\bfV,r}$.
Since ${\chi}^n_{\hat{\mathcal{L}}}\in\mathcal{F}^n_{\mathbf{V}}$, we have $f_n\in\mathcal{F}^n_{\mathbf{V}}$.

By Theorem III.12.1 in \cite{Kiehl_Weissauer_Weil_conjectures_perverse_sheaves_and_l'adic_Fourier_transform}, there is an injective map
$$\prod_{n\in\mathbb{Z}_{\geq1}}\chi^n:K(\mathcal{D}_{G_{\mathbf{V}}}(E_{\mathbf{V}}))\rightarrow\prod_{n\in\mathbb{Z}_{\geq1}}\underline{\mathcal{F}}^{n}_{\mathbf{V}}.$$
Hence, we have $[({_ij}_{\mathbf{V},r})_!(\mathcal{L})]\in K(\mathcal{Q}_{\mathbf{V}}).$
\end{proof}

Hence, we get an $\mathcal{A}$-linear map
$({_ij}_{r})_!:K({_i\mathcal{Q}}_{r})\rightarrow K(\mathcal{Q})$.


\begin{theorem}\label{theorem:str_of_K}
For any $I$-graded $\mathbb{K}$-vector space $\mathbf{V}$ with $\dim V_i=s$, we have the following isomorphism of $\mathcal{A}$-modules
\begin{eqnarray*}
{_i\Phi}_{\mathbf{V}}:K(\mathcal{Q}_\mathbf{V})&\rightarrow&\bigoplus_{r}K({_i\mathcal{Q}}_{\mathbf{V},r})\\
\textrm{$[\mathcal{L}]$}&\mapsto&([\mathcal{L}_{0}],[\mathcal{L}_{1}],\dots,[\mathcal{L}_{s}])
\end{eqnarray*}
where $\mathcal{L}_{r}={_ij}^\ast_{\mathbf{V},r}\mathcal{L}$.
\end{theorem}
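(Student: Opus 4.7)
The plan is to produce an explicit two-sided inverse of ${_i\Phi}_{\mathbf{V}}$ and verify the two compositions by using the disjoint stratification $E_{\mathbf{V}}=\bigsqcup_{r=0}^{s}{_iE}_{\mathbf{V},r}$ together with the distinguished triangles of Theorem \ref{tri}.

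First I would observe that ${_i\Phi}_{\mathbf{V}}$ is well-defined on the stated target: Lemma \ref{Q} says ${_ij}^{\ast}_{\mathbf{V},r}(\mathcal{Q}_{\mathbf{V}})\subseteq {_i\mathcal{Q}}_{\mathbf{V},r}$, so each component $[\mathcal{L}_r]$ really lies in $K({_i\mathcal{Q}}_{\mathbf{V},r})$, and clearly $\mathcal{L}_r=0$ when $r>s=\dim V_i$. Next I define the candidate inverse
$$\Psi_{\mathbf{V}}:\bigoplus_{r=0}^{s}K({_i\mathcal{Q}}_{\mathbf{V},r})\to K(\mathcal{Q}_{\mathbf{V}}),\qquad ([\mathcal{M}_0],\dots,[\mathcal{M}_s])\mapsto\sum_{r=0}^{s}[({_ij}_{\mathbf{V},r})_!\mathcal{M}_r],$$
which is well-defined thanks to Lemma \ref{Q1}.

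For ${_i\Phi}_{\mathbf{V}}\circ\Psi_{\mathbf{V}}=\mathrm{id}$, I would compute each component separately: because the strata ${_iE}_{\mathbf{V},r}$ and ${_iE}_{\mathbf{V},r'}$ are disjoint locally closed subvarieties, base change gives ${_ij}^{\ast}_{\mathbf{V},r}({_ij}_{\mathbf{V},r'})_!\mathcal{M}_{r'}=0$ for $r\neq r'$, while for $r=r'$ the locally closed embedding yields ${_ij}^{\ast}_{\mathbf{V},r}({_ij}_{\mathbf{V},r})_!\mathcal{M}_r=\mathcal{M}_r$. Summing over $r'$ gives exactly $[\mathcal{M}_r]$ in the $r$-th slot.

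For $\Psi_{\mathbf{V}}\circ{_i\Phi}_{\mathbf{V}}=\mathrm{id}$, the key is to telescope the distinguished triangles of Theorem \ref{tri}. For each $0\leq r\leq s$, passing the triangle $({_ij}_{\mathbf{V},r})_!\mathcal{L}_r\to\mathcal{L}_{\geq r}\to\mathcal{L}_{\geq r+1}\to$ to the Grothendieck group $K(\mathcal{D}_{G_{\mathbf{V}}}(E_{\mathbf{V}}))$ yields $[\mathcal{L}_{\geq r}]=[({_ij}_{\mathbf{V},r})_!\mathcal{L}_r]+[\mathcal{L}_{\geq r+1}]$. Telescoping from $r=0$, using $\mathcal{L}_{\geq 0}=\mathcal{L}$ and $\mathcal{L}_{\geq s+1}=0$, gives
$$[\mathcal{L}]=\sum_{r=0}^{s}[({_ij}_{\mathbf{V},r})_!\mathcal{L}_r]$$
inside $K(\mathcal{D}_{G_{\mathbf{V}}}(E_{\mathbf{V}}))$, which is precisely $\Psi_{\mathbf{V}}({_i\Phi}_{\mathbf{V}}([\mathcal{L}]))$.

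The main delicacy, and I expect the only genuine obstacle, is Step 4: the telescoping identity is produced in the ambient Grothendieck group $K(\mathcal{D}_{G_{\mathbf{V}}}(E_{\mathbf{V}}))$, whereas the target of $\Psi_{\mathbf{V}}$ is $K(\mathcal{Q}_{\mathbf{V}})$. Both $[\mathcal{L}]$ and $\sum_r[({_ij}_{\mathbf{V},r})_!\mathcal{L}_r]$ individually belong to $K(\mathcal{Q}_{\mathbf{V}})$ by hypothesis and by Lemma \ref{Q1}, so I need to know that the natural map $K(\mathcal{Q}_{\mathbf{V}})\to K(\mathcal{D}_{G_{\mathbf{V}}}(E_{\mathbf{V}}))$ is injective. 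I would settle this by composing with the trace maps $\chi^n$ recalled in the proof of Lemma \ref{Q1}: the injectivity of $\prod_n\chi^n$ (Theorem III.12.1 of \cite{Kiehl_Weissauer_Weil_conjectures_perverse_sheaves_and_l'adic_Fourier_transform}) together with Lusztig's identification $\chi^n(\mathcal{Q}_{\mathbf{V}})=\mathcal{F}^n_{\mathbf{V}}$ implies that the canonical basis $\mathbf{B}_{\mathbf{V}}$ remains linearly independent in $K(\mathcal{D}_{G_{\mathbf{V}}}(E_{\mathbf{V}}))$, giving the required injectivity. This transfers the telescoping identity back to $K(\mathcal{Q}_{\mathbf{V}})$ and completes the proof.
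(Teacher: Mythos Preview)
Your proof is correct and follows essentially the same route as the paper: define the inverse $\Psi_{\mathbf{V}}$ via $({_ij}_{\mathbf{V},r})_!$, use Theorem \ref{tri} to telescope and obtain $\Psi_{\mathbf{V}}\circ{_i\Phi}_{\mathbf{V}}=\mathrm{id}$, and use Lemma \ref{Q1} for the other direction. You are more explicit than the paper on two points it leaves implicit --- the base-change computation ${_ij}^\ast_{\mathbf{V},r}({_ij}_{\mathbf{V},r'})_!=0$ for $r\neq r'$ that yields ${_i\Phi}_{\mathbf{V}}\circ\Psi_{\mathbf{V}}=\mathrm{id}$, and the injectivity of $K(\mathcal{Q}_{\mathbf{V}})\to K(\mathcal{D}_{G_{\mathbf{V}}}(E_{\mathbf{V}}))$ needed to pull the telescoping identity back --- but these are refinements of the same argument rather than a different strategy.
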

\begin{proof}
By Lemma \ref{Q}, this map is well-defined.
Consider the following map
\begin{eqnarray*}
{_i\Psi}_{\mathbf{V}}:\bigoplus_{r}K({_i\mathcal{Q}}_{\mathbf{V},r})&\rightarrow&K(\mathcal{Q}_{\mathbf{V}})\\
([\mathcal{L}_{0}],[\mathcal{L}_{1}],\dots,[\mathcal{L}_{s}])&\mapsto&[\bigoplus_{r}({_ij}_{\mathbf{V},r})_!\mathcal{L}_{r}].
\end{eqnarray*}
By Theorem \ref{tri}, we have ${_i\Psi}_{\mathbf{V}}\circ{_i\Phi}_{\mathbf{V}}([\mathcal{L}])=[\mathcal{L}]$. Hence the map ${_i\Phi}_{\mathbf{V}}$ is injective.
By Lemma \ref{Q1}, the map ${_i\Phi}_{\mathbf{V}}$ is also surjective.
\end{proof}

\begin{theorem}\label{theorem:str_of_K1}
For any $I$-graded $\mathbb{K}$-vector space $\mathbf{V}$ and integer $0\leq r\leq \dim V_i$, fix an $I$-graded vector space $\mathbf{W}$ with $\dim W_i=\dim V_i-r$ and $\dim W_j=\dim V_j$ for all $j\neq i$. Then we have
the following isomorphism of $\mathcal{A}$-modules
\begin{eqnarray*}
{_i\mathrm{ind}}_{\mathbf{W}}^{\mathbf{V}}:K({_i\mathcal{Q}}_{\mathbf{W},0})&\rightarrow&K({_i\mathcal{Q}}_{\mathbf{V},r})\\
\textrm{$[\mathcal{L}]$}&\mapsto&[{_ij}^\ast_{\mathbf{V},r}(\mathrm{Ind}_{\mathbf{V}_{ri}\mathbf{W}}^{\mathbf{V}}(\cL_{ri},({_ij}_{\mathbf{W},0})_{!}\mathcal{L}))],
\end{eqnarray*}
where $\cL_{ri}$ is the constant sheaf on $E_{\mathbf{V}_{ri}}$.
\end{theorem}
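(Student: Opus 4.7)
The plan is to prove that ${_i\mathrm{ind}}^{\mathbf{V}}_{\mathbf{W}}$ is both well-defined and an isomorphism of $\mathcal{A}$-modules. Well-definedness is formal: for $[\mathcal{L}]\in K({_i\mathcal{Q}}_{\mathbf{W},0})$, the extension $({_ij}_{\mathbf{W},0})_{!}\mathcal{L}$ lies in $K(\mathcal{Q}_{\mathbf{W}})$ by Lemma \ref{Q1}, then $\mathrm{Ind}^{\mathbf{V}}_{\mathbf{V}_{ri},\mathbf{W}}(\mathcal{L}_{ri},\cdot)$ lands in $K(\mathcal{Q}_{\mathbf{V}})$, and finally ${_ij}^{\ast}_{\mathbf{V},r}$ lands in $K({_i\mathcal{Q}}_{\mathbf{V},r})$ by Lemma \ref{Q}.

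The geometric heart of the proof is the following observation. For any $x\in {_iE}_{\mathbf{V},r}$, the image $\mathrm{Im}(\bigoplus_{\rho:t(\rho)=i}x_{\rho})\subset V_{i}$ has codimension exactly $r$ and, because $i$ is a sink, is the unique codimension-$r$ subspace of $V_{i}$ extending to a subrepresentation of $\mathbf{V}$ of dimension type $\mathbf{W}$. Consequently, in Lusztig's induction correspondence
$$
E_{\mathbf{V}_{ri}}\times E_{\mathbf{W}}\xleftarrow{p_{1}}E''_{\mathbf{V}_{ri},\mathbf{W}}\xrightarrow{p_{3}}E_{\mathbf{V}},
$$
the restriction of $p_{3}$ to $p_{3}^{-1}({_iE}_{\mathbf{V},r})$ is a principal $G_{\mathbf{V}_{ri}}\times G_{\mathbf{W}}$-bundle, and the restriction of $p_{1}$ lands in $E_{\mathbf{V}_{ri}}\times {_iE}_{\mathbf{W},0}$ with smooth connected fibers. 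Since $E_{\mathbf{V}_{ri}}$ is reduced to a point (no loops at $i$), this pair of smooth surjections realizes ${_i\mathrm{ind}}^{\mathbf{V}}_{\mathbf{W}}$ as a push-pull transform through a smooth correspondence between ${_iE}_{\mathbf{W},0}$ and ${_iE}_{\mathbf{V},r}$ that bijectively transports simple perverse sheaves up to shift and Tate twist.

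To complete the argument I would verify that the image of ${_i\mathrm{ind}}^{\mathbf{V}}_{\mathbf{W}}$ contains a full $\mathcal{A}$-generating set of $K({_i\mathcal{Q}}_{\mathbf{V},r})$. By reindexing one may choose $\mathbf{y}\in Y_{\nu}$ ending with the block $(i,r)$, whence $\mathcal{L}_{\mathbf{y}}=\mathrm{Ind}^{\mathbf{V}}_{\mathbf{V}_{ri},\mathbf{W}}(\mathcal{L}_{ri},\mathcal{L}_{\mathbf{y}'})$ for the associated $\mathbf{y}'\in Y_{\underline{\dim}\mathbf{W}}$. Applying Theorem \ref{theorem:str_of_K} to $\mathcal{L}_{\mathbf{y}'}$ gives $[\mathcal{L}_{\mathbf{y}'}]=\sum_{s\geq0}[({_ij}_{\mathbf{W},s})_{!}\mathcal{L}_{\mathbf{y}',s}]$, and the key support estimate is that $\mathrm{Ind}(\mathcal{L}_{ri},({_ij}_{\mathbf{W},s})_{!}\mathcal{L}_{\mathbf{y}',s})$ is supported inside ${_iE}_{\mathbf{V},\geq r+s}$. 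Hence only the $s=0$ summand survives ${_ij}^{\ast}_{\mathbf{V},r}$, giving $[{_i\mathcal{L}}_{\mathbf{y},r}]={_i\mathrm{ind}}^{\mathbf{V}}_{\mathbf{W}}([\mathcal{L}_{\mathbf{y}',0}])$. Combined with the geometric transport in the previous step, this shows that ${_i\mathrm{ind}}^{\mathbf{V}}_{\mathbf{W}}$ sends the canonical basis ${_i\mathbf{B}}_{\mathbf{W},0}$ bijectively onto ${_i\mathbf{B}}_{\mathbf{V},r}$ up to shift, hence is an isomorphism.

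The main technical obstacle I anticipate is tracking the Tate twists, shifts, and relative dimensions of $p_{1}$ and $p_{3}$ precisely, so that the push-pull along the smooth correspondence transports perverse sheaves in the correct perverse $t$-structure and produces the stated formula exactly (rather than merely up to a global shift). Once this bookkeeping is in place, the equivalence of categories from the smooth correspondence and the surjectivity argument via $\mathcal{L}_{\mathbf{y}}$ with $\mathbf{y}$ ending in the block $(i,r)$ combine to deliver the desired $\mathcal{A}$-module isomorphism.
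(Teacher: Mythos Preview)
Your approach is correct in outline but takes a genuinely different route from the paper. The paper does not analyze the induction correspondence geometrically; instead it simply writes down an explicit inverse
\[
{_i\mathrm{res}}^{\mathbf{V}}_{\mathbf{W}}:K({_i\mathcal{Q}}_{\mathbf{V},r})\to K({_i\mathcal{Q}}_{\mathbf{W},0}),\qquad [\mathcal{L}]\mapsto[{_ij}^{\ast}_{\mathbf{W},0}\mathcal{L}'],
\]
where $\mathrm{Res}^{\mathbf{V}}_{\mathbf{V}_{ri},\mathbf{W}}\bigl(({_ij}_{\mathbf{V},r})_{!}\mathcal{L}\bigr)=\mathcal{L}_{ri}\otimes\mathcal{L}'$, and asserts that ${_i\mathrm{res}}^{\mathbf{V}}_{\mathbf{W}}\circ{_i\mathrm{ind}}^{\mathbf{V}}_{\mathbf{W}}=\mathrm{id}$ and ${_i\mathrm{ind}}^{\mathbf{V}}_{\mathbf{W}}\circ{_i\mathrm{res}}^{\mathbf{V}}_{\mathbf{W}}=\mathrm{id}$. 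This is shorter because it exploits the existing $\mathrm{Ind}$/$\mathrm{Res}$ machinery and avoids all the shift and fibre-dimension bookkeeping you anticipate. Your approach, by contrast, explains \emph{why} the map is an isomorphism: the uniqueness of the $\mathbf{W}$-dimensional subrepresentation over ${_iE}_{\mathbf{V},r}$ is exactly what collapses Lusztig's correspondence to a smooth equivalence between ${_iE}_{\mathbf{W},0}$ and ${_iE}_{\mathbf{V},r}$.

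Two remarks on your write-up. First, with the paper's conventions ($[\mathcal{L}']\ast[\mathcal{L}'']=[\mathrm{Ind}^{\mathbf{V}}_{\mathbf{V}',\mathbf{V}''}(\mathcal{L}',\mathcal{L}'')]$ and $\theta_{\mathbf{y}}=\theta_{i_1}^{(a_1)}\cdots\theta_{i_k}^{(a_k)}$), the sequence $\mathbf{y}$ giving $\mathcal{L}_{\mathbf{y}}=\mathrm{Ind}^{\mathbf{V}}_{\mathbf{V}_{ri},\mathbf{W}}(\mathcal{L}_{ri},\mathcal{L}_{\mathbf{y}'})$ must \emph{start} with the block $(i,r)$, not end with it. Second, and more substantively, your surjectivity step shows that ${_i\mathrm{ind}}^{\mathbf{V}}_{\mathbf{W}}$ carries ${_i\mathbf{B}}_{\mathbf{W},0}$ injectively into ${_i\mathbf{B}}_{\mathbf{V},r}$, but ``onto'' is not yet established: you would still need that every simple summand of ${_i\mathcal{L}}_{\mathbf{z},r}$ for an \emph{arbitrary} $\mathbf{z}\in Y_{\nu}$ already occurs in some ${_i\mathcal{L}}_{\mathbf{y},r}$ with $\mathbf{y}$ beginning $(i,r)$. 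The quickest way to close this gap is to run your same smooth-correspondence analysis on the restriction side, which recovers exactly the paper's inverse ${_i\mathrm{res}}^{\mathbf{V}}_{\mathbf{W}}$.
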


\begin{proof}
By Lemma \ref{Q1}, this map is well-defined. Consider the following map
\begin{eqnarray*}
{_i\mathrm{res}}_{\mathbf{W}}^{\mathbf{V}}:K({_i\mathcal{Q}}_{\mathbf{V},r})&\rightarrow&K({_i\mathcal{Q}}_{\mathbf{W},0})\\
\textrm{$[\mathcal{L}]$}&\mapsto&[{_ij}^\ast_{\mathbf{W},0}(\cL')],
\end{eqnarray*}
where $\mathrm{Res}_{\mathbf{V}_{ri}\mathbf{W}}^{\mathbf{V}}(({_ij}_{\mathbf{V},r})_{!}\mathcal{L})=\cL_{ri}\otimes\cL'$.
By definition, we have ${_i\mathrm{res}}_{\mathbf{W}}^{\mathbf{V}}\circ{_i\mathrm{ind}}_{\mathbf{W}}^{\mathbf{V}}=\mathrm{id}$ and ${_i\mathrm{ind}}_{\mathbf{W}}^{\mathbf{V}}\circ{_i\mathrm{res}}_{\mathbf{W}}^{\mathbf{V}}=\mathrm{id}$.
\end{proof}

By Theorem \ref{theorem:str_of_K} and \ref{theorem:str_of_K1}, we have the following decomposition of $K(\mathcal{Q})$.

\begin{theorem}\label{decom}
The $\mathcal{A}$-module $K(\mathcal{Q})$ has the following direct sum decomposition
$$K(\mathcal{Q})=\bigoplus_{r\geq0}[\mathcal{L}_{ri}]\ast ({_ij}_{0})_!(K({_i\mathcal{Q}}_{0})).$$
\end{theorem}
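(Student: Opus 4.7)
The plan is to combine Theorems \ref{theorem:str_of_K} and \ref{theorem:str_of_K1} with a single geometric support argument that converts the abstract identification ${_i\mathrm{ind}}_{\mathbf{W}}^{\mathbf{V}}$ into the explicit product $[\mathcal{L}_{ri}]\ast({_ij}_{\mathbf{W},0})_!$. Fix $\nu\in\mathbb{N}I$ and an $I$-graded $\mathbb{K}$-vector space $\mathbf{V}$ with $\underline{\dim}\mathbf{V}=\nu$, and let $s=\dim V_i$. Theorem \ref{theorem:str_of_K} says that the inverse ${_i\Psi}_{\mathbf{V}}$ of ${_i\Phi}_{\mathbf{V}}$ sends $([\mathcal{L}_0],\ldots,[\mathcal{L}_s])$ to $\sum_r [({_ij}_{\mathbf{V},r})_!\mathcal{L}_r]$, so
$$K(\mathcal{Q}_{\mathbf{V}})=\bigoplus_{r=0}^{s}({_ij}_{\mathbf{V},r})_!\bigl(K({_i\mathcal{Q}}_{\mathbf{V},r})\bigr).$$
For each $r$ choose $\mathbf{W}$ with $\dim W_i=s-r$ and $\dim W_j=\dim V_j$ otherwise, and use Theorem \ref{theorem:str_of_K1} to replace $K({_i\mathcal{Q}}_{\mathbf{V},r})$ by $K({_i\mathcal{Q}}_{\mathbf{W},0})$ through ${_i\mathrm{ind}}_{\mathbf{W}}^{\mathbf{V}}$.

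The key point is to verify the identity
$$({_ij}_{\mathbf{V},r})_!\,{_ij}^{\ast}_{\mathbf{V},r}\bigl(\mathcal{L}_{ri}\ast({_ij}_{\mathbf{W},0})_!\mathcal{L}\bigr)=\mathcal{L}_{ri}\ast({_ij}_{\mathbf{W},0})_!\mathcal{L}$$
for every $[\mathcal{L}]\in K({_i\mathcal{Q}}_{\mathbf{W},0})$. This reduces to showing that $\mathcal{L}_{ri}\ast({_ij}_{\mathbf{W},0})_!\mathcal{L}$ is supported in ${_iE}_{\mathbf{V},r}$. I will argue this pointwise on the induction diagram: a point in the support corresponds to $x\in E_{\mathbf{V}}$ admitting a subrepresentation $W'\subset\mathbf{V}$ with $W'\cong\mathbf{W}$, $\mathbf{V}/W'\cong\mathbf{V}_{ri}$, and $x|_{W'}$ in the support of $({_ij}_{\mathbf{W},0})_!\mathcal{L}\subset{_iE}_{\mathbf{W},0}$. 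Because $i$ is a sink and $\mathbf{V}_{ri}$ is concentrated at $i$, one has $W'_j=V_j$ for all $j\neq i$, and every arrow $\rho\in H$ with $t(\rho)=i$ satisfies $\mathrm{Im}(x_{\rho})\subset W'_i$. The defining condition of ${_iE}_{\mathbf{W},0}$ forces $\bigoplus_{\rho}x_{\rho}(W'_{s(\rho)})=W'_i$, so on $\mathbf{V}$ the image has codimension exactly $\dim V_i-\dim W'_i=r$, placing $x$ in ${_iE}_{\mathbf{V},r}$. Once this is known, since ${_iE}_{\mathbf{V},r}$ is open in the closed ${_iE}_{\mathbf{V},\geq r}$, I factor ${_ij}_{\mathbf{V},r}$ through ${_iE}_{\mathbf{V},\leq r}$ exactly as in the proof of Theorem \ref{tri} and conclude the identity by standard properties of $(-)_!$ for open and closed immersions.

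Combining the two pieces, the composition of $({_ij}_{\mathbf{V},r})_!$ with ${_i\mathrm{ind}}_{\mathbf{W}}^{\mathbf{V}}$ is the map
$$[\mathcal{L}]\longmapsto [\mathcal{L}_{ri}]\ast[({_ij}_{\mathbf{W},0})_!\mathcal{L}],$$
so the decomposition of Theorem \ref{theorem:str_of_K}, translated via Theorem \ref{theorem:str_of_K1}, yields
$$K(\mathcal{Q}_{\mathbf{V}})=\bigoplus_{r=0}^{s}[\mathcal{L}_{ri}]\ast({_ij}_{\mathbf{W},0})_!\bigl(K({_i\mathcal{Q}}_{\mathbf{W},0})\bigr).$$
Summing over all $\nu\in\mathbb{N}I$ and allowing $\mathbf{W}$ to range over all $I$-graded spaces gives the claimed decomposition of $K(\mathcal{Q})$.

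The main obstacle I expect is the support claim; everything else is a repackaging of the two previous theorems. In particular, I need to use the sink hypothesis at exactly the step where $W'_j=V_j$ for $j\neq i$, and to use the condition $[\mathcal{L}]\in K({_i\mathcal{Q}}_{\mathbf{W},0})$ (i.e.\ maximal image on $\mathbf{W}$) to pin down the exact codimension on $\mathbf{V}$. Directness of the sum is automatic from Theorem \ref{theorem:str_of_K}, since the isomorphism ${_i\mathrm{ind}}_{\mathbf{W}}^{\mathbf{V}}$ of Theorem \ref{theorem:str_of_K1} identifies the $r$th summand bijectively with $K({_i\mathcal{Q}}_{\mathbf{V},r})$.
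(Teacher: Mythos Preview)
Your approach is exactly the paper's: the paper simply writes ``By Theorem \ref{theorem:str_of_K} and \ref{theorem:str_of_K1}'' and states the decomposition, treating the passage from the composite $({_ij}_{\mathbf{V},r})_!\circ{_i\mathrm{ind}}_{\mathbf{W}}^{\mathbf{V}}$ to the product $[\mathcal{L}_{ri}]\ast({_ij}_{\mathbf{W},0})_!(-)$ as evident. Your support argument is the correct way to make that step explicit, and the paper leaves it implicit; so your proof is the same in spirit but strictly more detailed.
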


\begin{proposition}[\cite{Xiao_Zhao_Geometric_realizations_of_Lusztig's_symmetries,Xiao_Zhao_Geometric_realizations_of_Lusztig's_symmetries_of_symmetrizable_quantum_groups}]
There exists an isomorphism of $\mathcal{A}$-algebras ${{_i\lambda}_{0,\mathcal{A}}}:K({_i\mathcal{Q}}_0)\rightarrow{_i\mathbf{f}}_{\mathcal{A}}$ such that the following diagram is commutative
$$
\xymatrix{
K({_i\mathcal{Q}}_0)\ar[d]^-{{_i\lambda}_{0,\mathcal{A}}}\ar[r]^-{({_ij}_0)_!}&K(\mathcal{Q})\ar[r]^-{{_ij}_0^{\ast}}\ar[d]^-{\lambda_{\mathcal{A}}}    & K({_i\mathcal{Q}}_0)\ar[d]^-{{_i\lambda}_{0,\mathcal{A}}}\\
{_i\mathbf{f}}_{\mathcal{A}}\ar[r]&\mathbf{f}_{\mathcal{A}}\ar[r]^-{_i\pi_{\mathcal{A}}}& {_i\mathbf{f}}_{\mathcal{A}}.
}
$$
\end{proposition}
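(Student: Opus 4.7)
The plan is to assemble ${_i\lambda}_{0,\mathcal{A}}$ from the two parallel direct sum decompositions already in hand: the geometric one $K(\mathcal{Q})=\bigoplus_{r\geq 0}[\mathcal{L}_{ri}]\ast ({_ij}_{0})_!(K({_i\mathcal{Q}}_{0}))$ from Theorem \ref{decom}, and the algebraic one $\mathbf{f}_{\mathcal{A}}=\bigoplus_{t\geq 0}\theta_i^{(t)}{_i\mathbf{f}}_{\mathcal{A}}$ from Theorem \ref{theorem:decom}. By Theorem \ref{theorem:3.1}, $\lambda_{\mathcal{A}}$ is an algebra isomorphism sending $[\mathcal{L}_{ri}]\mapsto \theta_i^{(r)}$, applied to the flag type $\mathbf{y}=(i,r)$. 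Since $\lambda_{\mathcal{A}}$ is multiplicative and matches the $r$-th geometric summand with the $r$-th algebraic summand, the $r=0$ summand $({_ij}_0)_!(K({_i\mathcal{Q}}_0))$ must be carried onto $_i\mathbf{f}_{\mathcal{A}}$. I would then define ${_i\lambda}_{0,\mathcal{A}}([\mathcal{L}])=\lambda_{\mathcal{A}}(({_ij}_0)_![\mathcal{L}])$ for $[\mathcal{L}]\in K({_i\mathcal{Q}}_0)$. The left square of the diagram commutes tautologically, and bijectivity onto ${_i\mathbf{f}}_{\mathcal{A}}$ follows from Theorem \ref{decom} together with the injectivity of $({_ij}_0)_!$ (which is Theorem \ref{theorem:str_of_K1} specialized to $r=0$).

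Next, I would verify that ${_i\lambda}_{0,\mathcal{A}}$ respects multiplication. The algebra structure on $K({_i\mathcal{Q}}_0)$ is induced from that on $K(\mathcal{Q})$ via the formula $[\mathcal{L}]\cdot[\mathcal{L}']={_ij}_0^\ast(({_ij}_0)_![\mathcal{L}]\ast({_ij}_0)_![\mathcal{L}'])$. The crucial support claim is that $({_ij}_0)_![\mathcal{L}]\ast({_ij}_0)_![\mathcal{L}']$ already lies in $({_ij}_0)_!(K({_i\mathcal{Q}}_0))$, for which one must argue geometrically that the induction diagram carries ${_iE}_{\mathbf{V}',0}\times {_iE}_{\mathbf{V}'',0}$ into ${_iE}_{\mathbf{V},0}$. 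This uses the sink hypothesis: if $x$ is any extension of $(x',x'')\in {_iE}_{\mathbf{V}',0}\times {_iE}_{\mathbf{V}'',0}$, then the sum of the images of the incoming arrows at $i$ in $x$ is a subspace of $V_i=V'_i\oplus V''_i$ that contains $V'_i$ and projects onto $V''_i$, hence equals $V_i$. Granting this, $({_ij}_0)_!$ is a ring map onto its image, and since $\lambda_{\mathcal{A}}$ is a ring map, so is ${_i\lambda}_{0,\mathcal{A}}$.

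Finally, for the right square of the diagram, I would reduce to two elementary observations: ${_ij}_0^\ast\circ ({_ij}_0)_!=\mathrm{id}$ since ${_ij}_{\mathbf{V},0}$ is an open embedding, and $_i\pi_{\mathcal{A}}\circ(\text{incl.})=\mathrm{id}$ on ${_i\mathbf{f}}_{\mathcal{A}}$ by the definition of $_i\pi_{\mathcal{A}}$. Decomposing an arbitrary $[\mathcal{M}]\in K(\mathcal{Q})$ using Theorem \ref{decom} as $\sum_{r\geq 0}[\mathcal{L}_{ri}]\ast ({_ij}_0)_![\mathcal{M}_r]$, I would need to check ${_ij}_0^\ast([\mathcal{L}_{ri}]\ast ({_ij}_0)_![\mathcal{M}_r])=0$ for $r>0$ and $=[\mathcal{M}_0]$ for $r=0$; transported through $\lambda_{\mathcal{A}}$, this matches the algebraic projection $_i\pi_{\mathcal{A}}(\theta_i^{(r)}\cdot \lambda_{\mathcal{A}}({_ij}_0)_![\mathcal{M}_r])=\delta_{r,0}\lambda_{\mathcal{A}}({_ij}_0)_![\mathcal{M}_0]$. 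The vanishing step is the main technical obstacle: it requires showing that the induction of $\mathcal{L}_{ri}$ with a complex supported on the open $r=0$ stratum pushes the resulting support into $_iE_{\mathbf{V},\geq r}$, which again follows from a direct analysis of the incidence diagram under the sink assumption (the $ri$ extra copies at $i$ are never hit by any incoming arrow). Once this orthogonality is established, both squares commute and ${_i\lambda}_{0,\mathcal{A}}$ is the desired $\mathcal{A}$-algebra isomorphism.
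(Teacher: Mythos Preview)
The paper does not prove this proposition; it is quoted from the references of Xiao--Zhao and used as input. So there is no ``paper's own proof'' to compare against, and your task was really to supply an argument from scratch.

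Your outline contains a genuine gap at the very first step. You assert that because $\lambda_{\mathcal{A}}$ is multiplicative and sends $[\mathcal{L}_{ri}]$ to $\theta_i^{(r)}$, it must match the $r$-th geometric summand with the $r$-th algebraic summand, and hence carry $({_ij}_0)_!(K({_i\mathcal{Q}}_0))$ onto ${_i\mathbf{f}}_{\mathcal{A}}$. This is circular: knowing that the ``coefficients'' $[\mathcal{L}_{ri}]$ and $\theta_i^{(r)}$ correspond does \emph{not} force the two direct-sum decompositions of Theorem~\ref{decom} and Theorem~\ref{theorem:decom} to line up summand by summand. Concretely, after pulling the algebraic decomposition back through $\lambda_{\mathcal{A}}$, one sees by induction on $\nu_i$ that the two summands $\bigoplus_{r\geq 1}[\mathcal{L}_{ri}]\ast({_ij}_0)_!(K({_i\mathcal{Q}}_0))$ and $\lambda_{\mathcal{A}}^{-1}\bigl(\bigoplus_{t\geq 1}\theta_i^{(t)}{_i\mathbf{f}}_{\mathcal{A}}\bigr)$ coincide in each graded piece; but $({_ij}_0)_!(K({_i\mathcal{Q}}_0))$ and $\lambda_{\mathcal{A}}^{-1}({_i\mathbf{f}}_{\mathcal{A}})$ are then merely two complements of the same direct summand in a free $\mathcal{A}$-module, and complements are not unique. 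Without this identification your definition ${_i\lambda}_{0,\mathcal{A}}([\mathcal{L}])=\lambda_{\mathcal{A}}(({_ij}_0)_![\mathcal{L}])$ need not land in ${_i\mathbf{f}}_{\mathcal{A}}$, and the left square has no reason to commute.

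What is actually needed is an intrinsic characterization that ties the two sides together: on the algebraic side Lusztig identifies ${_i\mathbf{f}}$ with the kernel of the twisted derivation ${_ir}$ (coming from the $\theta_i\otimes(-)$ component of $r$), and on the geometric side one must show that for $\mathcal{L}$ supported on the open stratum ${_iE}_{\mathbf{V},0}$ the corresponding component of $\mathrm{res}$ vanishes. That computation (or an equivalent one using the canonical basis and the function $\epsilon_i$) is what the cited Xiao--Zhao papers carry out, and it is precisely the missing ingredient in your argument. Your support analyses for the multiplicativity of $({_ij}_0)_!$ and for the vanishing of ${_ij}_0^\ast$ on the higher summands are fine, but they only become relevant once the key inclusion $\lambda_{\mathcal{A}}\bigl(({_ij}_0)_!(K({_i\mathcal{Q}}_0))\bigr)\subseteq{_i\mathbf{f}}_{\mathcal{A}}$ has been established independently.
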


Hence, Theorem \ref{decom} is a geometric interpretation of Theorem \ref{theorem:decom}.

\subsection{}

When $i\in I$ is a source, we have similar constructions.
Let $\bfV$ be a finite dimensional $I$-graded $\bbK$-vector space such that $\underline{\dim}\bfV=\nu\in\bbN I$.
For any $r\in\mathbb{Z}_{\geq 0}$, consider a subvariety ${^iE}_{\bfV,r}$ of $E_\bfV$
$$
{^iE}_{\bfV,r}=\{x\in E_{\bfV}\mid Ker(\bigoplus_{\rho\in H,s(\rho)=i}x_{\rho}) \textrm{ has dimension $r$ in $V_i$}\}.
$$
Denote by ${^ij}_{\bfV,r}:{E}_{\bfV,r}\rightarrow E_{\bfV}$ be the natural embedding.
Let $\cD_{G_\bfV}^b({^iE}_{\bfV,r})$ be the $G_\bfV$-equivariant bounded derived category of ${\overline{\bbQ}_l}$-constructible complexes on ${^iE}_{\bfV,r}$.
Naturally, we have two functors
$({^ij}_{\mathbf{V},r})_!:\mathcal{D}_{G_{\mathbf{V}}}({^iE}_{\mathbf{V},r})\rightarrow \mathcal{D}_{G_{\mathbf{V}}}(E_{\mathbf{V}})$ and
${^ij}^\ast_{\mathbf{V},r}:\mathcal{D}_{G_{\mathbf{V}}}(E_{\mathbf{V}})\rightarrow\mathcal{D}_{G_{\mathbf{V}}}({^iE}_{\mathbf{V},r})$.



Similarly, we can define ${^i\mathcal{P}}_{\mathbf{V},r}$, ${^i\mathcal{Q}}_{\mathbf{V},r}$, $K({^i\mathcal{Q}}_{\mathbf{V},r})$, $K({^i\mathcal{Q}}_{r})$,
$({^ij}_r)_!$ and ${^ij}_r^{\ast}$. We also have the following results.

\begin{theorem}
For any $I$-graded $\mathbb{K}$-vector space $\mathbf{V}$ with $\dim V_i=s$, we have the following isomorphism
\begin{eqnarray*}
{^i\Phi}_{\mathbf{V}}:K(\mathcal{Q}_\mathbf{V})&\rightarrow&\bigoplus_{r}K({^i\mathcal{Q}}_{\mathbf{V},r})\\
\textrm{$[\mathcal{L}]$}&\mapsto&([\mathcal{L}_{0}],[\mathcal{L}_{1}],\dots,[\mathcal{L}_{s}]),
\end{eqnarray*}
where $\mathcal{L}_{r}={^ij}^\ast_{\mathbf{V},r}\mathcal{L}$.
The inverse of ${^i\Phi}_{\mathbf{V}}$ is
\begin{eqnarray*}
{^i\Psi}_{\mathbf{V}}:\bigoplus_{r}K({^i\mathcal{Q}}_{\mathbf{V},r})&\rightarrow&K(\mathcal{D}_{G_{\mathbf{V}}}(E_{\mathbf{V}}))\\
([\mathcal{L}_{0}],[\mathcal{L}_{1}],\dots,[\mathcal{L}_{s}])&\mapsto&[\bigoplus_{r}({^ij}_{\mathbf{V},r})_!\mathcal{L}_{r}].
\end{eqnarray*}
\end{theorem}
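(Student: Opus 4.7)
The plan is to mirror, step for step, the proof of Theorem~\ref{theorem:str_of_K} (the sink case), using the stratification $\{{^iE}_{\bfV,r}\}_{r\geq 0}$ in place of $\{{_iE}_{\bfV,r}\}_{r\geq 0}$. The only geometric input that changes is that we now stratify by the dimension of $\ker(\bigoplus_{s(\rho)=i}x_\rho)$ rather than the codimension of $\mathrm{Im}(\bigoplus_{t(\rho)=i}x_\rho)$. Both functions $x\mapsto \dim\ker(\cdots)$ and $x\mapsto \mathrm{codim}\,\mathrm{Im}(\cdots)$ are upper semi-continuous on $E_\bfV$ (they are cut out locally by vanishing of minors), so ${^iE}_{\bfV,\geq r}:=\bigcup_{r'\geq r}{^iE}_{\bfV,r'}$ is closed in $E_\bfV$ and ${^iE}_{\bfV,r}\subset{^iE}_{\bfV,\leq r}$ is an open stratum, which is exactly what was used in the sink case.

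First I would prove the source analog of Lemma~\ref{Q}: ${^ij}^\ast_{\bfV,r}(\mathcal{Q}_\bfV)={^i\mathcal{Q}}_{\bfV,r}$. The argument is identical to Lemma~\ref{Q}: pulling back along the fiber-product diagram obtained by intersecting $\tilde F_\bfy$ with ${^iE}_{\bfV,r}\times F_\bfy$ gives ${^ij}^\ast_{\bfV,r}\mathcal L_\bfy={^i\mathcal L}_{\bfy,r}$, and then proper base change and semisimplicity finish the argument. This shows that ${^i\Phi}_\bfV$ is well-defined.

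Next I would reproduce the inductive distinguished-triangle argument of Theorem~\ref{tri}, with all the open/closed embeddings replaced by their source analogs. The input is exactly the closedness of ${^iE}_{\bfV,\geq r+1}$ inside $E_\bfV$ (which gives a distinguished triangle relating $(\cdot)_!{^ij}^\ast_{\bfV,\leq r}\mathcal L_{\geq r}$, $\mathcal L_{\geq r}$ and $\mathcal L_{\geq r+1}$), together with the identity ${^ij}^\ast_{\bfV,r}\mathcal L_{\geq r}={^ij}^\ast_{\bfV,r}\mathcal L$ obtained by applying ${^ij}^\ast_{\bfV,r}$ to the triangle for the strictly lower strata; collapsing the support of ${^ij}^\ast_{\bfV,\leq r}\mathcal L_{\geq r}$ onto the single open stratum ${^iE}_{\bfV,r}$ then converts it into $({^ij}_{\bfV,r})_!\mathcal L_r$. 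This yields, by induction on $r$, the source analog of Theorem~\ref{tri}, which in the Grothendieck group gives $[\mathcal L]=\sum_r[({^ij}_{\bfV,r})_!\mathcal L_r]$, i.e.\ ${^i\Psi}_\bfV\circ{^i\Phi}_\bfV=\mathrm{id}$.

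Finally, I would prove the source analog of Lemma~\ref{Q1}, namely $[({^ij}_{\bfV,r})_!\mathcal L]\in K(\mathcal Q_\bfV)$ for any $\mathcal L\in{^i\mathcal Q}_{\bfV,r}$, by exactly the trace-function argument used in Lemma~\ref{Q1}: pick a lift $\hat{\mathcal L}\in\mathcal Q_\bfV$ with ${^ij}^\ast_{\bfV,r}\hat{\mathcal L}=\mathcal L$ (provided by the previous step), observe that $\chi^n_{({^ij}_{\bfV,r})_!\mathcal L}$ is the extension by zero of $\chi^n_{\hat{\mathcal L}}|_{{^iE}^{F^n}_{\bfV,r}}$, note that this restriction-extension operation preserves the composition subalgebra $\mathcal F^n_\bfV$ (since the strata ${^iE}_{\bfV,r}$ are $G_\bfV$-stable, the resulting function is $G^{F^n}_\bfV$-invariant and lies in $\mathcal F^n_\bfV$ via the same argument as in the sink case), and then use injectivity of $\prod_n\chi^n$. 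Together with Step~3 this gives surjectivity of ${^i\Phi}_\bfV$, and combined with ${^i\Psi}_\bfV\circ{^i\Phi}_\bfV=\mathrm{id}$ from Step~2, finishes the proof. The only genuinely new thing to verify is that the upper semi-continuity of $\dim\ker$ gives the correct open/closed decomposition of $E_\bfV$; everything else is a formal translation of the sink-case proof, so I expect the main subtlety to be bookkeeping rather than any new idea.
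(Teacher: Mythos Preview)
Your proposal is correct and is precisely the argument the paper has in mind: the paper does not give a separate proof of this theorem, it simply states it as the source analog after remarking ``Similarly, we can define \ldots\ We also have the following results,'' so mirroring Lemma~\ref{Q}, Theorem~\ref{tri}, Lemma~\ref{Q1}, and Theorem~\ref{theorem:str_of_K} with the stratification by $\dim\ker(\bigoplus_{s(\rho)=i}x_\rho)$ is exactly what is intended. Your observation that upper semi-continuity of $\dim\ker$ makes ${^iE}_{\bfV,\geq r}$ closed is the only point where one has to check anything genuinely new, and it is correct.
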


\begin{theorem}
For any $I$-graded $\mathbb{K}$-vector space $\mathbf{V}$ and integer $0\leq r\leq \dim V_i$, fix an $I$-graded vector space $\mathbf{W}$ with $\dim W_i=\dim V_i-r$ and $\dim W_j=\dim V_j$ for all $j\neq i$. Then we have
the following isomorphism
\begin{eqnarray*}
{^i\mathrm{ind}}_{\mathbf{W}}^{\mathbf{V}}:K({^i\mathcal{Q}}_{\mathbf{W},0})&\rightarrow&K({^i\mathcal{Q}}_{\mathbf{V},r})\\
\textrm{$[\mathcal{L}]$}&\mapsto&
[{^ij}^\ast_{\mathbf{V},r}(\mathrm{Ind}_{\mathbf{W}\mathbf{V}_{ri}}^{\mathbf{V}}(({^ij}_{\mathbf{W},0})_{!}\mathcal{L},\cL_{ri}))],
\end{eqnarray*}
whose inverse is
\begin{eqnarray*}
{^i\mathrm{res}}_{\mathbf{W}}^{\mathbf{V}}:K({^i\mathcal{Q}}_{\mathbf{V},r})&\rightarrow&K({^i\mathcal{Q}}_{\mathbf{W},0})\\
\textrm{$[\mathcal{L}]$}&\mapsto&[{^ij}^\ast_{\mathbf{W},0}(\cL')],
\end{eqnarray*}
where $\mathrm{Res}_{\mathbf{W}\mathbf{V}_{ri}}^{\mathbf{V}}(({^ij}_{\mathbf{V},r})_{!}\mathcal{L})=\cL'\otimes\cL_{ri}$.
\end{theorem}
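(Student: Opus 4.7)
The plan is to mirror the proof of Theorem~\ref{theorem:str_of_K1}, inverting everywhere ``sink'' to ``source'', ``image codimension'' to ``kernel dimension'', and swapping the order of the two factors in the induction/restriction. First I would check well-definedness of ${^i\mathrm{ind}}_{\mathbf{W}}^{\mathbf{V}}$, which requires the source analog of Lemma~\ref{Q1}, namely that $({^ij}_{\mathbf{W},0})_!\mathcal{L}\in K(\mathcal{Q}_{\mathbf{W}})$ for every $[\mathcal{L}]\in K({^i\mathcal{Q}}_{\mathbf{W},0})$. The Frobenius/trace-function argument in the proof of Lemma~\ref{Q1} goes through without modification: one builds ${}^{r}\chi^{n}_{\mathcal{L}}$ on the $F^n$-fixed points of ${^iE}_{\mathbf{W},0}$, identifies it with the restriction of some $\chi^n_{\hat{\mathcal{L}}}\in\mathcal{F}^n_{\mathbf{W}}$, and invokes injectivity of $\prod_n\chi^n$. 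Combined with the source analog of Lemma~\ref{Q}, this shows ${^i\mathrm{ind}}_{\mathbf{W}}^{\mathbf{V}}$ lands in $K({^i\mathcal{Q}}_{\mathbf{V},r})$.

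Next, I would verify that ${^i\mathrm{res}}_{\mathbf{W}}^{\mathbf{V}}$ is well-defined, i.e.\ that $\mathrm{Res}^{\mathbf{V}}_{\mathbf{W},\mathbf{V}_{ri}}(({^ij}_{\mathbf{V},r})_{!}\mathcal{L})$ genuinely splits as $\mathcal{L}'\otimes\mathcal{L}_{ri}$ with constant second factor. This is where the source hypothesis enters: for $i$ a source, every $x\in{^iE}_{\mathbf{V},r}$ has a canonical $r$-dimensional kernel of $\bigoplus_{s(\rho)=i}x_\rho$ inside $V_i$, and placing this kernel inside a chosen subrepresentation $\mathbf{W}$ yields a canonical short exact sequence $0\to\mathbf{W}\to\mathbf{V}\to\mathbf{V}_{ri}\to 0$ with the trivial representation $\mathbf{V}_{ri}$ as quotient. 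Thus, along the restriction diagram, the $\mathbf{V}_{ri}$-component is forced to be the constant sheaf.

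The identities ${^i\mathrm{res}}_{\mathbf{W}}^{\mathbf{V}}\circ{^i\mathrm{ind}}_{\mathbf{W}}^{\mathbf{V}}=\mathrm{id}$ and ${^i\mathrm{ind}}_{\mathbf{W}}^{\mathbf{V}}\circ{^i\mathrm{res}}_{\mathbf{W}}^{\mathbf{V}}=\mathrm{id}$ then follow by the same definitional unraveling used in Theorem~\ref{theorem:str_of_K1}: base change along the induction/restriction diagram, combined with the fact that ${^ij}^{\ast}_{\mathbf{W},0}\circ({^ij}_{\mathbf{W},0})_!=\mathrm{id}$ on ${^iE}_{\mathbf{W},0}$, since ${^ij}_{\mathbf{W},0}$ is an open embedding into $E_{\mathbf{W}}$. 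Here I would use the projection formula to commute induction past the open restriction ${^ij}^{\ast}_{\mathbf{V},r}$, reducing both compositions to tautologies.

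The main obstacle I anticipate is the geometric identification in the second step --- ensuring that the restriction really does produce a trivial $\mathcal{L}_{ri}$ factor rather than some nontrivial object on $E_{\mathbf{V}_{ri}}$. Once this is pinned down by an explicit examination of the parameter space of short exact sequences $0\to\mathbf{W}\to\mathbf{V}\to\mathbf{V}_{ri}\to 0$ compatible with the source condition at $i$ (which is essentially a Grassmann bundle whose fiber behaviour is dictated by the vanishing of the quotient's structure maps), the remaining verifications are formal and mirror the sink case line for line.
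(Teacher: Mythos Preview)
Your proposal is correct and follows essentially the same approach as the paper: the paper gives no separate proof for the source case, simply stating ``we also have the following results'' after noting the constructions are similar, and the sink version (Theorem~\ref{theorem:str_of_K1}) is proved in exactly the terse manner you outline---well-definedness via Lemma~\ref{Q1}, then ``by definition'' the two compositions are mutually inverse. Your added discussion of why the restriction forces a trivial $\mathcal{L}_{ri}$ factor and your use of base change / projection formula flesh out steps the paper leaves implicit, but the strategy is identical.
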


%

\begin{theorem}\label{decom2}
The $\mathcal{A}$-module $K(\mathcal{Q})$ has the following direct sum decomposition
$$K(\mathcal{Q})=\bigoplus_{r\geq0}({^ij}_{0})_!(K({^i\mathcal{Q}}_{0}))\ast[\mathcal{L}_{ri}].$$
\end{theorem}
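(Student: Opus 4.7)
The plan is to mirror the proof of Theorem \ref{decom}, using the two source analogues stated just above in place of Theorem \ref{theorem:str_of_K} and Theorem \ref{theorem:str_of_K1}. Fix $\nu \in \mathbb{N}I$ and an $I$-graded $\mathbb{K}$-vector space $\mathbf{V}$ with $\underline{\dim}\mathbf{V}=\nu$ and $\dim V_i = s$. The first analogue provides the decomposition
$$K(\mathcal{Q}_{\mathbf{V}}) = \bigoplus_{r=0}^{s} ({^ij}_{\mathbf{V},r})_!\bigl(K({^i\mathcal{Q}}_{\mathbf{V},r})\bigr),$$
while the second identifies each summand with $K({^i\mathcal{Q}}_{\mathbf{W}(r),0})$ through the map ${^i\mathrm{ind}}^{\mathbf{V}}_{\mathbf{W}(r)}$, where $\mathbf{W}(r)$ is chosen with $\dim W(r)_i = s - r$ and $\dim W(r)_j = \dim V_j$ for $j \neq i$.

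The remaining task is to show that the composition
$$K({^i\mathcal{Q}}_{\mathbf{W}(r),0}) \xrightarrow{{^i\mathrm{ind}}^{\mathbf{V}}_{\mathbf{W}(r)}} K({^i\mathcal{Q}}_{\mathbf{V},r}) \xrightarrow{({^ij}_{\mathbf{V},r})_!} K(\mathcal{Q}_{\mathbf{V}})$$
sends $[\mathcal{L}]$ to $[({^ij}_{\mathbf{W}(r),0})_!(\mathcal{L})] \ast [\mathcal{L}_{ri}]$. The fact that $[\mathcal{L}_{ri}]$ now appears on the right, in contrast to the sink case of Theorem \ref{decom}, is a direct consequence of the definition of ${^i\mathrm{ind}}^{\mathbf{V}}_{\mathbf{W}(r)}$, in which $\mathcal{L}_{ri}$ occupies the \emph{second} slot of $\mathrm{Ind}^{\mathbf{V}}_{\mathbf{W},\mathbf{V}_{ri}}$; by the definition of the convolution product $\ast$, the second slot of $\mathrm{Ind}$ corresponds precisely to right multiplication by $[\mathcal{L}_{ri}]$.

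The main technical obstacle is a support statement dual to the one implicit in the sink case: one needs $\mathrm{Ind}^{\mathbf{V}}_{\mathbf{W},\mathbf{V}_{ri}}\bigl(({^ij}_{\mathbf{W},0})_!\mathcal{L}, \mathcal{L}_{ri}\bigr)$ to be supported in a union of strata ${^iE}_{\mathbf{V},r'}$ with $r' \geq r$, so that applying $({^ij}_{\mathbf{V},r})_! \circ {^ij}^{\ast}_{\mathbf{V},r}$ recovers the induction modulo contributions from strictly deeper strata, which are absorbed into the summands for larger $r$. The geometric input is that $i$ is a source of $Q$: for an $x \in E_{\mathbf{V}}$ to admit an $x$-stable subrepresentation of type $\mathbf{W}(r)$ with a purely $i$-graded quotient of dimension $r$, the outgoing arrows at $i$ must vanish on an $r$-dimensional complement of $W(r)_i$ inside $V_i$, forcing $\dim \mathrm{Ker}\bigl(\bigoplus_{s(\rho)=i} x_\rho\bigr) \geq r$. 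Once this support bound is established, summing the identification over all $\nu \in \mathbb{N}I$ and all $0 \leq r \leq \dim V_i$ yields the decomposition claimed.
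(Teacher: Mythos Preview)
Your strategy matches the paper's: Theorem \ref{decom2} is stated without proof among the source-case analogues, and even the sink version (Theorem \ref{decom}) is deduced in one line from Theorems \ref{theorem:str_of_K} and \ref{theorem:str_of_K1}. Composing the two source analogues and then identifying the resulting summands with $({^ij}_0)_!(K({^i\mathcal{Q}}_0))\ast[\mathcal{L}_{ri}]$ via a triangularity argument is exactly the right elaboration of that one line.

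The one error is in the direction of the support bound and its justification. Under the paper's flag convention (where $\underline{\dim}\,\mathbf{V}^1=a_1i_1$ and $\lambda_{\mathcal A}(\mathcal{L}_{\mathbf{y}})=\theta_{i_1}^{(a_1)}\cdots\theta_{i_k}^{(a_k)}$) the first slot of $\mathrm{Ind}^{\mathbf{V}}_{\mathbf{V}',\mathbf{V}''}$ is the \emph{sub}object and the second is the quotient. So in $\mathrm{Ind}^{\mathbf{V}}_{\mathbf{W},\mathbf{V}_{ri}}$ the space $\mathbf{W}$ is the sub and $\mathbf{V}_{ri}$ the quotient, as you say; but then the induced arrows on the quotient, $\bar{x}_\rho\colon V_i/W_i \to V_{t(\rho)}/V_{t(\rho)}=0$, vanish automatically because the target is zero, and nothing forces $x_\rho$ to vanish on a complement of $W_i$ inside $V_i$. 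The genuine constraint comes from requiring $x|_{\mathbf{W}}\in{^iE}_{\mathbf{W},0}$, i.e.\ that $\bigoplus_{s(\rho)=i} x_\rho$ be injective on $W_i$; this gives $W_i\cap\ker\bigl(\bigoplus_{s(\rho)=i} x_\rho\bigr)=0$ and hence $\dim\ker\leq \operatorname{codim}_{V_i}W_i=r$. Thus the support of $\mathrm{Ind}^{\mathbf{V}}_{\mathbf{W},\mathbf{V}_{ri}}\bigl(({^ij}_{\mathbf{W},0})_!\mathcal{L},\mathcal{L}_{ri}\bigr)$ lies in ${^iE}_{\mathbf{V},\leq r}$, not ${^iE}_{\mathbf{V},\geq r}$. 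The triangularity argument then runs in the opposite direction (lower rather than upper triangular in $r$, still with invertible diagonal ${^i\mathrm{ind}}^{\mathbf{V}}_{\mathbf{W}(r)}$), and the conclusion is unchanged once this is corrected.
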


\begin{proposition}[\cite{Xiao_Zhao_Geometric_realizations_of_Lusztig's_symmetries,Xiao_Zhao_Geometric_realizations_of_Lusztig's_symmetries_of_symmetrizable_quantum_groups}]
There exists an isomorphism of $\mathcal{A}$-algebras ${^i\lambda}_{0,\mathcal{A}}:K({^i\mathcal{Q}}_0)\rightarrow{^i\mathbf{f}}_{\mathcal{A}}$ such that the following diagram is commutative
$$
\xymatrix{
K({^i\mathcal{Q}}_0)\ar[d]^-{{^i\lambda}_{0,\mathcal{A}}}\ar[r]^-{({^ij}_0)_!}&K(\mathcal{Q})\ar[r]^-{{^ij}_0^{\ast}}\ar[d]^-{\lambda_{\mathcal{A}}}    & K({^i\mathcal{Q}}_0)\ar[d]^-{{^i\lambda}_{0,\mathcal{A}}}\\
{^i\mathbf{f}}_{\mathcal{A}}\ar[r]&\mathbf{f}_{\mathcal{A}}\ar[r]^-{^i\pi_{\mathcal{A}}}& {^i\mathbf{f}}_{\mathcal{A}}.
}
$$
\end{proposition}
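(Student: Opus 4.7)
The plan is to deduce this from Theorem \ref{decom2} and the algebraic decomposition of Theorem \ref{theorem:decom}, using Lusztig's isomorphism $\lambda_{\mathcal{A}}$ of Theorem \ref{theorem:3.1}. First I apply $\lambda_{\mathcal{A}}$ to the decomposition
\[K(\mathcal{Q})=\bigoplus_{r\geq 0}({^ij}_{0})_{!}(K({^i\mathcal{Q}}_{0}))\ast[\mathcal{L}_{ri}]\]
of Theorem \ref{decom2}. Using $\lambda_{\mathcal{A}}([\mathcal{L}_{ri}])=\theta_{i}^{(r)}$ (the special case $\mathbf{y}=((i),(r))$ of Theorem \ref{theorem:3.1}) and multiplicativity of $\lambda_{\mathcal{A}}$, this becomes
\[\mathbf{f}_{\mathcal{A}}=\bigoplus_{r\geq 0}\lambda_{\mathcal{A}}(({^ij}_{0})_{!}(K({^i\mathcal{Q}}_{0})))\cdot\theta_{i}^{(r)}.\]
Comparing with $\mathbf{f}_{\mathcal{A}}=\bigoplus_{r}{^i\mathbf{f}}_{\mathcal{A}}\,\theta_{i}^{(r)}$ of Theorem \ref{theorem:decom} and appealing to the uniqueness of such right-$\theta_{i}^{(r)}$-graded expansions, I conclude $\lambda_{\mathcal{A}}(({^ij}_{0})_{!}(K({^i\mathcal{Q}}_{0})))={^i\mathbf{f}}_{\mathcal{A}}$.

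I then define ${^i\lambda}_{0,\mathcal{A}}$ as the corestriction of $\lambda_{\mathcal{A}}\circ({^ij}_{0})_{!}$ onto ${^i\mathbf{f}}_{\mathcal{A}}$; it is bijective because $({^ij}_{0})_{!}$ is injective (by the source-version of Theorem \ref{theorem:str_of_K}) and its image is exactly ${^i\mathbf{f}}_{\mathcal{A}}$ by the previous paragraph. The left square then commutes by construction. For the right square, I use that ${^iE}_{\mathbf{V},0}$ is open in $E_{\mathbf{V}}$, so ${^ij}_{0}$ is an open embedding and ${^ij}_{0}^{\ast}\circ({^ij}_{0})_{!}=\mathrm{id}_{K({^i\mathcal{Q}}_{0})}$; this gives commutativity tautologically on the subspace $({^ij}_{0})_{!}(K({^i\mathcal{Q}}_{0}))$. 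To extend it to all of $K(\mathcal{Q})$, I decompose any class via Theorem \ref{decom2} as $\sum_{r}({^ij}_{0})_{!}(y_{r})\ast[\mathcal{L}_{ri}]$ and check, via the restriction-to-open-stratum of the induction diagram (Theorem \ref{Q} source version), that ${^ij}_{0}^{\ast}$ applied to the $r\geq 1$ terms matches under ${^i\lambda}_{0,\mathcal{A}}$ what ${^i\pi}_{\mathcal{A}}$ does to ${^i\mathbf{f}}_{\mathcal{A}}\cdot\theta_{i}^{(r)}$.

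Finally, to promote ${^i\lambda}_{0,\mathcal{A}}$ to an $\mathcal{A}$-algebra isomorphism, I verify that $({^ij}_{0})_{!}(K({^i\mathcal{Q}}_{0}))$ is closed under the $\ast$-product: an induction of two representations each without $S_{i}$ as a subrepresentation yields one without $S_{i}$ as a subrepresentation, since at a source any injection $S_{i}\hookrightarrow M$ in $0\to M'\to M\to M''\to 0$ either factors through $M'$ or produces a nonzero, hence injective, map $S_{i}\to M''$. Transporting this subalgebra product via $\lambda_{\mathcal{A}}$ yields the product on ${^i\mathbf{f}}_{\mathcal{A}}$, making ${^i\lambda}_{0,\mathcal{A}}$ multiplicative.

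The main obstacle, as I see it, is the commutativity of the right square on the $r\geq 1$ part of the decomposition: while ${^ij}_{0}^{\ast}$ is an open restriction and ${^i\pi}_{\mathcal{A}}$ is a direct-sum projection along the right-$\theta_{i}$-degree filtration of $\mathbf{f}_{\mathcal{A}}$, pinning down their agreement requires a careful comparison through Theorems \ref{theorem:str_of_K} and \ref{theorem:str_of_K1} in their source versions, tracking how the open-closed stratification of $E_{\mathbf{V}}$ interacts with the induction diagram.
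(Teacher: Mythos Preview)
The paper does not prove this proposition; it is quoted from \cite{Xiao_Zhao_Geometric_realizations_of_Lusztig's_symmetries,Xiao_Zhao_Geometric_realizations_of_Lusztig's_symmetries_of_symmetrizable_quantum_groups} and then used to interpret Theorem~\ref{decom2} as the geometric counterpart of Theorem~\ref{theorem:decom}. Your attempt reverses that logic, trying to deduce the proposition from Theorems~\ref{decom2} and~\ref{theorem:decom}. That reversal contains a genuine gap.

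The problematic step is the appeal to ``uniqueness of such right-$\theta_i^{(r)}$-graded expansions.'' Having two direct-sum decompositions
\[
\mathbf{f}_{\mathcal{A}}=\bigoplus_{r\geq 0}A\,\theta_i^{(r)}=\bigoplus_{r\geq 0}{^i\mathbf{f}}_{\mathcal{A}}\,\theta_i^{(r)}
\]
with $A=\lambda_{\mathcal{A}}\bigl(({^ij}_0)_!K({^i\mathcal{Q}}_0)\bigr)$ does \emph{not} force $A={^i\mathbf{f}}_{\mathcal{A}}$: both are complements of the same submodule $\bigoplus_{r\geq 1}{^i\mathbf{f}}_{\mathcal{A}}\theta_i^{(r)}$, and complements are far from unique. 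Already in degree $\nu$ with $\nu_i=1$ one only gets $A_\nu\oplus C=({^i\mathbf{f}}_{\mathcal{A}})_\nu\oplus C$ for the common piece $C$, which gives an isomorphism but not equality of subspaces. Everything downstream (the definition of ${^i\lambda}_{0,\mathcal{A}}$ as a corestriction, and the right square) presupposes $A\subseteq{^i\mathbf{f}}_{\mathcal{A}}$, so the gap propagates.

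What is actually needed is an \emph{intrinsic} identification: one must show directly that $\lambda_{\mathcal{A}}\bigl(({^ij}_0)_!K({^i\mathcal{Q}}_0)\bigr)\subseteq{^i\mathbf{f}}_{\mathcal{A}}$. The natural route is Lusztig's characterization of ${^i\mathbf{f}}$ as the kernel of the $i$-derivation $r_i$ (equivalently, the vanishing of the $(\nu-i,i)$-component of $r$), together with the compatibility $\lambda_{\mathcal{A}}\otimes\lambda_{\mathcal{A}}\circ\mathrm{res}=r\circ\lambda_{\mathcal{A}}$ from Theorem~\ref{theorem:3.1}. Geometrically this amounts to checking that $\mathrm{Res}^{\mathbf{V}}_{\mathbf{W},\mathbf{V}_i}$ applied to any complex supported on ${^iE}_{\mathbf{V},0}$ vanishes, which is true because on that locus no representation has $S_i$ as a subobject. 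Once this inclusion is in hand, your decomposition comparison does yield equality, and the rest of your outline (including the right square, where your worry is misplaced: the $r\geq 1$ summands of Theorem~\ref{decom2} are supported on ${^iE}_{\mathbf{V},\geq r}$ and hence killed by ${^ij}_0^\ast$) goes through.
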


\section{Geometric realization of Lusztig's symmetry $T_i:\mathbf{U}\rightarrow\mathbf{U}$}

In this section, we shall recall the geometric realization of $T_i:{_i\mathbf{f}}\rightarrow{^i\mathbf{f}}$ in \cite{Xiao_Zhao_Geometric_realizations_of_Lusztig's_symmetries,Xiao_Zhao_Geometric_realizations_of_Lusztig's_symmetries_of_symmetrizable_quantum_groups}. By using this geometric realization and the structure of $K(\mathcal{Q})$ in last section, we shall
give a geometric realization of Lusztig's symmetry $T_i:\mathbf{U}\rightarrow\mathbf{U}$.

\subsection{}

Assume that $i$ is a sink of $Q=(I,H,s,t)$. So $i$ is a source of $Q'=\sigma_iQ=(I,H',s,t)$, where $\sigma_iQ$ is the quiver by reversing the directions
of all arrows in $Q$ containing $i$. For any $\nu,\nu'\in\mathbb{N}I$ such that $\nu'=s_i\nu$ and $I$-graded $\mathbb{K}$-vector spaces $\mathbf{V}$,  $\mathbf{V}'$ such that $\underline{\dim}\mathbf{V}=\nu$, $\underline{\dim}\mathbf{V}'=\nu'$,
there exists a functor (\cite{Xiao_Zhao_Geometric_realizations_of_Lusztig's_symmetries,Xiao_Zhao_Geometric_realizations_of_Lusztig's_symmetries_of_symmetrizable_quantum_groups})
$$\tilde{\omega}_i:{_i\mathcal{Q}}_{\mathbf{V},0}\rightarrow{^i\mathcal{Q}}_{\mathbf{V}',0},$$
which induces the following map
$$\tilde{\omega}_i:K({_i\mathcal{Q}_{0}})\rightarrow K({^i\mathcal{Q}_{0}}).$$

\begin{theorem}[\cite{Xiao_Zhao_Geometric_realizations_of_Lusztig's_symmetries,Xiao_Zhao_Geometric_realizations_of_Lusztig's_symmetries_of_symmetrizable_quantum_groups}]\label{theorem:Lusztig's_algebra}
We have the following commutative diagram
$$\xymatrix{K({_i\mathcal{Q}_0})\ar[r]^-{\tilde{\omega}_i}\ar[d]^-{_i\lambda_{0,\mathcal{A}}}&K({^i\mathcal{Q}_0})\ar[d]^-{^i\lambda_{0,\mathcal{A}}}\\
{_i\mathbf{f}}_{\mathcal{A}}\ar[r]^-{T_i}&{^i\mathbf{f}}_{\mathcal{A}}.}$$
\end{theorem}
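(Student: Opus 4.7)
The plan is to prove $T_i \circ {_i\lambda}_{0,\mathcal{A}} = {^i\lambda}_{0,\mathcal{A}} \circ \tilde{\omega}_i$ by first establishing that both composites are $\mathcal{A}$-algebra homomorphisms, then checking agreement on a set of algebra generators of ${_i\mathbf{f}}_{\mathcal{A}}$. For the algebra-homomorphism reduction: the functor $\tilde{\omega}_i$ is built via pullback-pushforward along a correspondence between ${_iE}_{\mathbf{V},0}$ and ${^iE}_{\mathbf{V}',0}$, where $\underline{\dim}\mathbf{V}' = s_i \underline{\dim}\mathbf{V}$, and its multiplicativity should follow from proper base change applied to the fibered product of this correspondence with the Lusztig induction diagram, combined with the observation that the open condition defining ${_iE}_{\mathbf{V},0}$ (no direct summand isomorphic to $S_i$, equivalently $\bigoplus_{t(\rho)=i} x_\rho$ surjective) is preserved under extensions. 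Granting this, ${^i\lambda}_{0,\mathcal{A}} \circ \tilde{\omega}_i$ is an algebra map, and $T_i \circ {_i\lambda}_{0,\mathcal{A}}$ is an algebra map because $T_i$ restricts to an algebra isomorphism ${_i\mathbf{f}} \to {^i\mathbf{f}}$ by its very definition.

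Next I would identify a convenient generating set. The classes $[\mathcal{L}_j] \in K({_i\mathcal{Q}}_{\mathbf{V}_j,0})$ for $j \neq i$ map under ${_i\lambda}_{0,\mathcal{A}}$ to $\theta_j \in {_i\mathbf{f}}$, and together with suitable classes coming from dimension vectors supported on $i$ they generate $K({_i\mathcal{Q}_0})$ as an algebra. For each $j \neq i$, tracing $\tilde{\omega}_i([\mathcal{L}_j])$ through the correspondence and applying the decomposition theorem to the defining proper map yields an explicit complex on ${^iE}_{\mathbf{V}',0}$ with $\underline{\dim}\mathbf{V}' = j + (-a_{ij})i$. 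Under ${^i\lambda}_{0,\mathcal{A}}$ this should match Lusztig's formula $T_i(\theta_j) = \sum_{r+s=-a_{ij}}(-1)^r v^{-r} \theta_i^{(s)} \theta_j \theta_i^{(r)}$; the analogous check must then be carried out on the additional generators built from $i$-loaded dimension vectors, for which the multiplicativity already established in the first step significantly restricts what needs to be verified.

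The main obstacle is precisely this final matching. One has to identify the summands produced by the decomposition theorem as shifts and Tate twists of the simple perverse sheaves $\mathcal{L}_{\mathbf{y}}$ with $\mathbf{y}$ of the form $(i^{(s)}, j, i^{(r)})$, and then track how the cohomology of the Grassmannian-like fibers of the reflection correspondence reproduces the coefficients $(-1)^r v^{-r}$ together with the divided-power normalizations implicit in $\theta_i^{(s)}\theta_j\theta_i^{(r)}$. The sign and shift bookkeeping is delicate but follows the pattern of Kato's finite-type argument, extended to the symmetrizable setting in the Xiao-Zhao work being cited; once this matching is in place on generators, multiplicativity propagates the equality of the two composites to all of $K({_i\mathcal{Q}_0})$.
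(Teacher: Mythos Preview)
The paper does not give its own proof of this theorem: it is stated with a citation to the Xiao--Zhao papers and used as a black box. So there is no proof in the present paper to compare your proposal against; any comparison must be with the cited references.

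That said, your outline has a genuine gap at the ``generating set'' step. You propose to check equality on the classes $[\mathcal{L}_j]$ for $j\neq i$ together with ``suitable classes coming from dimension vectors supported on $i$''. Neither piece works. First, if $\underline{\dim}\mathbf{V}=ri$ with $r>0$ then ${_iE}_{\mathbf{V},0}$ is empty (there are no arrows with target $i$ other than those already accounted for, so the image of $\bigoplus_{t(\rho)=i}x_\rho$ has codimension $r>0$), hence $K({_i\mathcal{Q}}_{\mathbf{V},0})=0$ and there are no ``$i$-loaded'' generators to add. Second, the elements $\theta_j$ for $j\neq i$ do \emph{not} generate ${_i\mathbf{f}}$ as an algebra in general: already for type $A_2$ with $i$ the sink, ${_i\mathbf{f}}$ contains a one-dimensional piece in degree $\alpha_j+\alpha_i$ (spanned by $\theta_i\theta_j - v^{-1}\theta_j\theta_i$), while the subalgebra generated by $\theta_j$ alone has nothing in that degree. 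So agreement on your proposed generators does not propagate to all of $K({_i\mathcal{Q}_0})$.

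The approach in the cited Xiao--Zhao work is different in spirit. Rather than reducing to a small generating set, they exploit the trace maps $\chi^n$ to Hall algebras (as recalled in this paper around Lemma~\ref{Q1}): the functor $\tilde{\omega}_i$ is constructed so that, on the level of functions over $\mathbb{F}_{q^n}$, it matches the BGP reflection functor, for which compatibility with $T_i$ is Ringel's theorem. Injectivity of $\prod_n\chi^n$ then transports the identity back to the Grothendieck group. If you want to salvage a ``check on generators'' argument, you would need a genuine algebra generating set for ${_i\mathbf{f}}$---for instance Lusztig's elements $f(i,j;m)$ or, equivalently, the simple perverse sheaves in ${_i\mathcal{P}}_{\mathbf{V},0}$ for $\underline{\dim}\mathbf{V}=j+mi$ with $0\le m\le -a_{ij}$---and verify the formula on all of those, which is substantially more than the single check you describe.
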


\subsection{}

By the triangular decomposition (\ref{tri-decom}) of $DK(\mathcal{Q})(Q)$,
the following set is an $\mathcal{A}$-basis of $DK(\mathcal{Q})(Q)$
$$\{[\cL]^-\mathbf{k}_{\mu} [\cL']^+\},$$
where
$\cL\in\mathcal{P}_{\mathbf{V}}$ for some $I$-graded $\mathbb{K}$-vector space $\mathbf{V}$ with $\underline{\dim}\bfV=\nu\in\bbN I$,
$\cL'\in\mathcal{P}_{\mathbf{V}'}$ for some $I$-graded $\mathbb{K}$-vector space $\mathbf{V}'$ with $\underline{\dim}\bfV'=\nu'\in\bbN I$
and $\mu\in P^{\vee}$.

By Theorem \ref{decom}, we have
$$[\cL]=\sum_{r\geq0}[\mathcal{L}_{ri}]\ast({_ij}_{0})_![\cL_r]$$
and
$$[\cL']=\sum_{r\geq0}[\mathcal{L}_{ri}]\ast({_ij}_{0})_![\cL'_r],$$
where
$[\cL_r]={_i\mathrm{res}}_{\mathbf{W}_r}^{\mathbf{V}}[{_ij}^\ast_{\mathbf{V},r}\mathcal{L}]$,
$[\cL'_r]={_i\mathrm{res}}_{\mathbf{W}_r}^{\mathbf{V}}[{_ij}^\ast_{\mathbf{V},r}\mathcal{L}']$ and $\mathbf{W}_r$ is an $I$-graded $\mathbb{K}$-vector space with $\dim W_i=\dim V_i-r$ and $\dim W_j=\dim V_j$ for all $j\neq i$.

Define
$$\tilde{T}_i([\cL]^-)=\sum_{r\geq0}v^{\langle{\nu,ri}\rangle}\mathbf{k}_{-rh_i}[\cL_{ti}]^+({^ij}_{0})_![\tilde{\omega}_i(\cL_r)]^-,$$
$$\tilde{T}_i([\cL']^+)=\sum_{r\geq0}v^{\langle{\nu,ri}\rangle}\mathbf{k}_{rh_i}[\cL_{ti}]^-({^ij}_{0})_![\tilde{\omega}_i(\cL'_r)]^+$$
and
$$\tilde{T}_i(\mathbf{k}_{\mu})=\mathbf{k}_{s_i(\mu)}.$$
Hence, we get a map $$\tilde{T}_i:DK(\mathcal{Q})(Q)\rightarrow DK(\mathcal{Q})(Q').$$

\begin{proposition}\label{bij}
The map $\tilde{T}_i$ is a bijection.
\end{proposition}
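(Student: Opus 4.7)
My approach is to construct an explicit two-sided inverse $\tilde{T}_i^{-1}\colon DK(\mathcal{Q})(Q') \to DK(\mathcal{Q})(Q)$ by a symmetric construction and then verify the composition identities. The map $\tilde{T}_i$ is specified on the three triangular factors of $DK(\mathcal{Q})(Q)\cong K(\mathcal{Q})^-\otimes \mathbf{K}\otimes K(\mathcal{Q})^+$: on $\mathbf{K}$ it is the involution $\mathbf{k}_\mu\mapsto\mathbf{k}_{s_i(\mu)}$, while on $K^\pm(Q)$ it uses Theorem \ref{decom} to decompose each $[\mathcal{L}]$ uniquely as $\sum_r [\mathcal{L}_{ri}]\ast ({_ij}_0)_![\mathcal{L}_r]$, transports the base piece $[\mathcal{L}_r]\in K({_i\mathcal{Q}}_0)$ by $\tilde{\omega}_i$, and reassembles in $DK(\mathcal{Q})(Q')$ with the prescribed $\mathbf{k}$-factors and $v$-twists.

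To build $\tilde{T}_i^{-1}$, I exploit that $i$ is a source of $Q'$ and $Q=\sigma_iQ'$, so Theorem \ref{decom2} applied to $Q'$ writes each $[\hat{\mathcal{L}}]\in K(\mathcal{Q})(Q')$ uniquely as $\sum_r ({^ij}_0)_![\hat{\mathcal{L}}_r]\ast [\mathcal{L}_{ri}]$. Since $\tilde{\omega}_i$ realizes the Lusztig symmetry $T_i\colon{_i\mathbf{f}}\to{^i\mathbf{f}}$ by Theorem \ref{theorem:Lusztig's_algebra}, and $T_i$ is an algebra isomorphism, $\tilde{\omega}_i$ has a well-defined inverse $\tilde{\omega}_i^{-1}\colon K({^i\mathcal{Q}}_0(Q'))\to K({_i\mathcal{Q}}_0(Q))$. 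The candidate $\tilde{T}_i^{-1}$ is then defined on each generator $[\hat{\mathcal{L}}]^\pm$ of $DK(\mathcal{Q})(Q')$ by applying $\tilde{\omega}_i^{-1}$ to the base pieces and reassembling in $DK(\mathcal{Q})(Q)$ via the sink-side decomposition of Theorem \ref{decom}, together with the involution $\mathbf{k}_\mu\mapsto\mathbf{k}_{s_i(\mu)}$ on the torus, and extended to the whole $DK(\mathcal{Q})(Q')$ through the triangular basis $\{[\mathcal{L}]^-\mathbf{k}_\mu[\mathcal{L}']^+\}$.

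The main obstacle is the compositional verification $\tilde{T}_i^{-1}\circ\tilde{T}_i=\mathrm{id}$ and $\tilde{T}_i\circ\tilde{T}_i^{-1}=\mathrm{id}$. The difficulty is that applying $\tilde{T}_i$ to a basis element $[\mathcal{L}]^-\mathbf{k}_\mu[\mathcal{L}']^+$ produces a double sum over the decomposition indices for both $\mathcal{L}$ and $\mathcal{L}'$, with mixed $K^+(Q')$- and $K^-(Q')$-factors appearing in a non-standard order, which must be re-collected into the triangular form $K^-\otimes\mathbf{K}\otimes K^+$ using the commutation relations $\mathbf{k}_\mu[\mathcal{L}]^\pm=v^{\pm\alpha(\mu)}[\mathcal{L}]^\pm\mathbf{k}_\mu$ together with the Drinfeld-double cross relations encoded by the pairing $\varphi$. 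The essential simplification is that, after this rearrangement, $\tilde{\omega}_i$ and $\tilde{\omega}_i^{-1}$ cancel on the base pieces by Theorem \ref{theorem:Lusztig's_algebra}, reducing the problem to showing that the outer factors $v^{\langle\nu,ri\rangle}$, $\mathbf{k}_{\pm rh_i}$ and $[\mathcal{L}_{ri}]^\pm$ telescope correctly across the $r$-sums. The bookkeeping is substantial but is made tractable by the uniqueness of the decompositions in Theorems \ref{decom} and \ref{decom2}, which forces the individual $r$-components to match on both sides.
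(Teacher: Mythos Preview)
Your approach is essentially the same as the paper's: both construct an explicit inverse using the source-side decomposition of Theorem~\ref{decom2} applied to $Q'$ together with $\tilde{\omega}_i^{-1}$, extended through the triangular basis and the torus involution $\mathbf{k}_\mu\mapsto\mathbf{k}_{s_i(\mu)}$. The paper calls this inverse $\tilde{T}_i'$ and writes down the explicit formulas (mirroring those for $\tilde{T}_i$ with the appropriate sign and $v$-twist changes), then simply asserts that ``by the definitions of $\tilde{T}_i$ and $\tilde{T}_i'$'' the two are mutually inverse; you are more candid in flagging the reordering into triangular form as the substantive step. In that sense your proposal is at least as complete as the paper's own argument.
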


For the proof of this proposition, we should construct the inverse of $\tilde{T}_i$.
The algebra $DK(\mathcal{Q}))(Q')$ also has the following  $\mathcal{A}$-basis
$$\{[\cL]^-\mathbf{k}_{\mu} [\cL']^+\},$$
where
$\cL\in\mathcal{P}_{\mathbf{V}}$ for some $I$-graded $\mathbb{K}$-vector space $\mathbf{V}$ with $\underline{\dim}\bfV=\nu\in\bbN I$,
$\cL'\in\mathcal{P}_{\mathbf{V}'}$ for some $I$-graded $\mathbb{K}$-vector space $\mathbf{V}'$ with $\underline{\dim}\bfV'=\nu'\in\bbN I$
and $\mu\in P^{\vee}$.

By Theorem \ref{decom2}, we have
$$[\cL]=\sum_{r\geq0}({^ij}_{0})_![\cL_r]\ast[\mathcal{L}_{ri}]$$
and
$$[\cL']=\sum_{r\geq0}({^ij}_{0})_![\cL'_r]\ast[\mathcal{L}_{ri}],$$
where
$[\cL_r]={^i\mathrm{res}}_{\mathbf{W}_r}^{\mathbf{V}}[{^ij}^\ast_{\mathbf{V},r}\mathcal{L}]$,
$[\cL'_r]={^i\mathrm{res}}_{\mathbf{W}_r}^{\mathbf{V}}[{^ij}^\ast_{\mathbf{V},r}\mathcal{L}']$.

Define
$$\tilde{T}'_i([\cL]^-)=\sum_{r\geq0}v^{\langle{ri,\nu}\rangle}({_ij}_{0})_![\tilde{\omega}^{-1}_i(\cL_r)]^-[\cL_{ti}]^+\mathbf{k}_{rh_i},$$
$$\tilde{T}'_i([\cL']^+)=\sum_{r\geq0}v^{\langle{ri,\nu}\rangle}({_ij}_{0})_![\tilde{\omega}^{-1}_i(\cL'_r)]^+[\cL_{ti}]^-\mathbf{k}_{-rh_i}$$
and
$$\tilde{T}'_i(\mathbf{k}_{\mu})=\mathbf{k}_{s_i(\mu)}.$$
Hence, we get a map $$\tilde{T}'_i:DK(\mathcal{Q})(Q')\rightarrow DK(\mathcal{Q})(Q).$$
By the definitions of $\tilde{T}_i$ and $\tilde{T}'_i$, the map $\tilde{T}'_i$ is the inverse of $\tilde{T}_i$. Hence, we have proved Proposition \ref{bij}.

The following theorem is the main result in this paper.

\begin{theorem}\label{main}
The map $\tilde{T}_i:DK(\mathcal{Q})(Q)\rightarrow DK(\mathcal{Q})(Q')$ is an isomorphism of Hopf algebras satisfying that
the following diagram is commutative
$$
\xymatrix{
{DK(\mathcal{Q})(Q)}\ar[d]^{\cong}\ar[r]^{\tilde{T}_i}&{DK(\mathcal{Q})(Q')}\ar[d]^{\cong}\\
{\mathbf{U}_{\mathcal{A}}}\ar[r]^{T_i}&{\mathbf{U}_{\mathcal{A}}}.}
$$
\end{theorem}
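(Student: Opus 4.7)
The strategy is to reduce the claim to the commutativity of the square on a generating set of $DK(\mathcal{Q})(Q)$: if $\lambda_{\mathcal{A}}\circ\tilde{T}_i=T_i\circ\lambda_{\mathcal{A}}$ on generators, then algebra-multiplicativity of both sides propagates the identity to the whole double, and the Hopf compatibility can then be read off from the Hopf property of $\lambda_{\mathcal{A}}$ on each side together with the Drinfeld double construction. Bijectivity is already granted by Proposition~\ref{bij}.

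\medskip\noindent\textbf{Step 1: Diagram commutativity on generators.} By the triangular decomposition (\ref{tri-decom}), $DK(\mathcal{Q})(Q)$ is generated as an $\mathcal{A}$-algebra by $\mathbf{k}_\mu$ together with $[\mathcal{L}_j]^\pm$ for $j\in I$. The torus case is immediate since $s_i(\mu)=\mu-\alpha_i(\mu)h_i$. For $j\neq i$, the support space $\mathbf{V}_j$ has $V_i=0$, hence $\mathcal{L}_j$ already lies in ${_i\mathcal{Q}}_{\mathbf{V}_j,0}$, so in the decomposition of Theorem~\ref{decom} only the $r=0$ summand is nonzero. The defining formula then reduces to $\tilde{T}_i([\mathcal{L}_j]^{\pm})=({^ij}_0)_![\tilde{\omega}_i(\mathcal{L}_j)]^{\pm}$, and Theorem~\ref{theorem:Lusztig's_algebra} identifies this with $T_i(E_j)$ or $T_i(F_j)$ under $\lambda_{\mathcal{A}}$ (using that $\theta_j\in{_i\mathbf{f}}$ so Lusztig's symmetry on ${_i\mathbf{f}}$ coincides with the restriction of $T_i$). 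For $j=i$, the decomposition of $\mathcal{L}_i$ has only the $r=1$ piece whose ${_i\mathcal{Q}}_0$-factor is the trivial complex over the zero space; substituting this into the definition produces (after collecting the shifts built into $[\mathcal{L}_{ri}]$ and the weight factor $v^{\langle\nu,ri\rangle}$) expressions that $\lambda_{\mathcal{A}}$ sends to $-F_iK_i=T_i(E_i)$ and $-K_{-i}E_i=T_i(F_i)$.

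\medskip\noindent\textbf{Step 2: Algebra homomorphism.} Commutativity on generators extends to all of $DK(\mathcal{Q})(Q)$ by induction on word length, using that $\lambda_{\mathcal{A}}$ is an algebra isomorphism on each side and that $T_i$ is an algebra automorphism of $\mathbf{U}_{\mathcal{A}}$; together with the bijectivity of $\tilde{T}_i$ this forces $\tilde{T}_i$ to be an algebra isomorphism. In particular, setting the Hopf-pairing aside, both vertical arrows and the bottom arrow are algebra isomorphisms, so the identity $\lambda_{\mathcal{A}}\circ\tilde{T}_i=T_i\circ\lambda_{\mathcal{A}}$ holds globally.

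\medskip\noindent\textbf{Step 3: Hopf structure.} Since the coproduct $\tilde{\Delta}$, antipode $\tilde{S}$, and counit $\tilde{\mathbf{e}}$ on each $DK(\mathcal{Q})$ are determined by the restriction operators and the pairing $\varphi$ on generators, compatibility of $\tilde{T}_i$ with these structures reduces to a finite check. Using the theorem of Section~4 (both $\lambda_{\mathcal{A}}$'s are Hopf isomorphisms onto $\mathbf{U}_{\mathcal{A}}$) together with Step~2, it suffices to verify on generators that $(\tilde{T}_i\otimes\tilde{T}_i)\circ\tilde{\Delta}=\tilde{\Delta}\circ\tilde{T}_i$, $\tilde{S}\circ\tilde{T}_i=\tilde{T}_i\circ\tilde{S}$, and $\tilde{\mathbf{e}}\circ\tilde{T}_i=\tilde{\mathbf{e}}$. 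The torus generators are trivial; for $[\mathcal{L}_j]^\pm$ with $j\neq i$ compatibility is inherited from Theorem~\ref{theorem:Lusztig's_algebra} applied to the restriction side of the Hopf pairing, since $\tilde{\omega}_i$ is itself compatible with the geometric restriction on the ${_i\mathcal{Q}}_0$-realization of ${_i\mathbf{f}}$; for $j=i$ the mixing of positive and negative parts in the definition of $\tilde{T}_i$ combines with the specific $v$-power prefactors to give the required Hopf compatibility.

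\medskip\noindent\textbf{Main obstacle.} The most delicate point is the verification in Step~1 at the reflected vertex $j=i$: one must track the Tate twists in $[\mathcal{L}_{ri}]$, the normalization $v^{\langle\nu,ri\rangle}$, and the conversion between the $[\cdot]^+$ and $[\cdot]^-$ copies so that the resulting element matches $-F_iK_i$ and $-K_{-i}E_i$ on the nose. Equally subtle is verifying in Step~3 that the twist $v^{(|y|,|x'|)}$ in the multiplication on $K(\mathcal{Q})\otimes K(\mathcal{Q})$ and the $v^{-(\alpha,\beta)-\nu(\beta)+\nu'(\alpha)}$ factor in $\varphi$ transport correctly across the mixing of positive and negative components inherent in the definition of $\tilde{T}_i$; this is the step that cannot be handled by transporting along $\lambda_{\mathcal{A}}$ alone and requires a direct calculation on the geometric side.
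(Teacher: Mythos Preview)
Your Step~2 contains a circularity that breaks the argument. The map $\tilde{T}_i$ is \emph{defined} only as an $\mathcal{A}$-linear map on the basis $\{[\mathcal{L}]^-\mathbf{k}_\mu[\mathcal{L}']^+\}$ coming from the triangular decomposition; nothing in its construction tells you it is multiplicative. So when you write ``algebra-multiplicativity of both sides propagates the identity to the whole double,'' you are using multiplicativity of $\lambda_{\mathcal{A}}\circ\tilde{T}_i$, which would require $\tilde{T}_i$ itself to be multiplicative---and that is exactly the conclusion you are trying to reach. Checking the square on the algebra generators $[\mathcal{L}_j]^\pm$ and $\mathbf{k}_\mu$ therefore does not suffice: agreement on generators extends to all of $DK(\mathcal{Q})(Q)$ only for a pair of algebra homomorphisms, and $\tilde{T}_i$ is not known to be one at this stage.

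The paper sidesteps this entirely by working with a \emph{basis} rather than generators. Theorem~\ref{decom} (and the triangular decomposition) produce an explicit $\mathcal{A}$-basis $\{[\mathcal{L}_{ri}]^-({_ij}_0)_!(b)^-\mathbf{k}_\mu[\mathcal{L}_{r'i}]^+({_ij}_0)_!(b')^+\}$ of $DK(\mathcal{Q})(Q)$, and Theorem~\ref{theorem:decom} gives the matching basis $\{F_i^{(r)}{_i\lambda}_{0,\mathcal{A}}(b)^-K_\mu E_i^{(r')}{_i\lambda}_{0,\mathcal{A}}(b')^+\}$ of $\mathbf{U}_{\mathcal{A}}$; these correspond under $\lambda_{\mathcal{A}}$. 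The very definition of $\tilde{T}_i$ is tailored so that on each basis vector it does exactly what $T_i$ does to the corresponding element of $\mathbf{U}_{\mathcal{A}}$ (this is where Theorem~\ref{theorem:Lusztig's_algebra} enters, for the ${_i\mathbf{f}}$-piece). Commutativity of the square is then a purely $\mathcal{A}$-linear statement, verified basis-element by basis-element. Once the square commutes as linear maps, the algebra and Hopf structure of $\tilde{T}_i$ are obtained by transport: $\tilde{T}_i=\lambda_{\mathcal{A}}^{-1}\circ T_i\circ\lambda_{\mathcal{A}}$ with all three maps on the right being Hopf isomorphisms. In particular your Step~3, with its direct geometric checks of coproduct and pairing compatibility, is unnecessary; the paper never performs any such calculation.
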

\begin{proof}
Consider the $\mathcal{A}$-basis ${_i\mathbf{B}}_{0}$ of $K({_i\mathcal{Q}}_{0})$.
By Theorem \ref{decom}, the following set is an $\mathcal{A}$-basis of $DK(\mathcal{Q})(Q)$
$$\{[\mathcal{L}_{ri}]^-({_ij}_{0})_!(b)^-\mathbf{k}_{\mu}[\mathcal{L}_{r'i}]^+({_ij}_{0})_!(b')^+\,\,|
\,\,r,r'\in\mathbb{Z}_{\geq0},b,b'\in{_i\mathbf{B}}_{0},\mu\in P^{\vee}\}.$$
Similarly, by Theorem \ref{theorem:decom}, the following set is an $\mathcal{A}$-basis of $\mathbf{U}_{\mathcal{A}}$
$$\{{F^{(r)}_i}{{_i\lambda}_{0,\mathcal{A}}}(b)^-K_{\mu}{E^{(r')}_i}{{_i\lambda}_{0,\mathcal{A}}}(b')^+\,\,|
\,\,r,r'\in\mathbb{Z}_{\geq0},b,b'\in{_i\mathbf{B}}_{0},\mu\in P^{\vee}\}.$$
Note that these two basis are identified under the isomorphism $\lambda_{\mathcal{A}}:DK(\mathcal{Q})(Q)\cong\mathbf{U}_{\mathcal{A}}$.
By Theorem \ref{theorem:Lusztig's_algebra} and the definition of $\tilde{T}_i$, we have the desired commutative diagram.
Since $T_i:\mathbf{U}_{\mathcal{A}}\rightarrow \mathbf{U}_{\mathcal{A}}$ is an isomorphism of Hopf algebras, so is the map $\tilde{T}_i:DK(\mathcal{Q})(Q)\rightarrow DK(\mathcal{Q})(Q')$.
\end{proof}

\section{Braid group relations}

In this section, we shall consider the braid group relations of Lusztig's symmetries.

\subsection{}
First, we shall recall the Fourier-Deligne transform (\cite{Kiehl_Weissauer_Weil_conjectures_perverse_sheaves_and_l'adic_Fourier_transform}\cite{Lusztig_Introduction_to_quantum_groups}).
Let $Q=(I,H,s,t)$ be a quiver.
Let $E$ be a subset of $H$ and denote by $Q'=\sigma_EQ$ the quiver obtained from $Q$ by reversing all
the arrows in $E$. Given $\nu\in\bbN I$, let $\bfV$ be an $I$-graded $\mathbb{K}$-vector space with dimension vector $\nu$.
The Fourier-Deligne transform is denoted by (\cite{Lusztig_Introduction_to_quantum_groups})
$$
\Theta_{Q,Q'}:\cD^b_{G_{\bfV}}(E_{\bfV,Q})\rightarrow\cD^b_{G_{\bfV}}(E_{\bfV,Q'}).
$$

\begin{lemma}[\cite{Lusztig_Introduction_to_quantum_groups}]\label{fD}
The Fourier-Deligne transform $\Theta_{Q,Q'}:\cD^b_{G_{\bfV}}(E_{\bfV,Q})\rightarrow\cD^b_{G_{\bfV}}(E_{\bfV,Q'})$ is an equivalence of triangulated categories and
$\Theta_{Q,Q'}(\mathcal{Q}_{\mathbf{V},Q})=\mathcal{Q}_{\mathbf{V},Q'}$.
\end{lemma}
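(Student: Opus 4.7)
The plan is to prove Lemma \ref{fD} in two stages: first, establish the abstract equivalence of triangulated categories using Fourier inversion; second, show that the category $\mathcal{Q}_{\mathbf{V}}$ is preserved by reducing to an explicit computation on the generators $\mathcal{L}_{\mathbf{y}}$. I assume the reader is willing to grant the standard formal properties of $\Theta_{Q,Q'}$ (commutation with proper base change, interaction with linear subbundles, etc.).

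First, for the equivalence statement. Decompose $E_{\mathbf{V},Q}=E_{\mathbf{V},H\setminus E}\times E_{\mathbf{V},E}$ and similarly for $Q'$, so that $\Theta_{Q,Q'}$ is defined as a fiberwise Fourier-Deligne transform along the vector bundle $E_{\mathbf{V},E}\to E_{\mathbf{V},H\setminus E}$, using the natural duality pairing between $E_{\mathbf{V},E}$ and the opposite arrows in $E_{\mathbf{V},E}$ for $Q'$. The Fourier inversion theorem of Deligne--Laumon then says that $\Theta_{Q',Q}\circ \Theta_{Q,Q'}$ is isomorphic to the identity functor up to a shift and Tate twist, so $\Theta_{Q,Q'}$ is an equivalence of $G_{\mathbf{V}}$-equivariant derived categories.

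For the preservation of $\mathcal{Q}_{\mathbf{V}}$, it suffices to prove that $\Theta_{Q,Q'}(\mathcal{L}_{\mathbf{y},Q})$ is, up to a shift and twist, isomorphic to $\mathcal{L}_{\mathbf{y},Q'}$ for every $\mathbf{y}\in Y_{\nu}$; this is because $\mathcal{P}_{\mathbf{V}}$ consists of the simple summands of the $\mathcal{L}_{\mathbf{y}}$ and $\Theta_{Q,Q'}$ preserves semisimplicity and the perverse $t$-structure. The key geometric fact is that the flag variety $F_{\mathbf{y}}$ is intrinsic to $\mathbf{V}$ and does not depend on the orientation. Over $F_{\mathbf{y}}$, the stability condition $x_{\rho}(V^{l}_{s(\rho)})\subset V^{l}_{t(\rho)}$ for $\rho\in H\setminus E$ carves out the same subbundle $\widetilde{F}_{\mathbf{y},H\setminus E}\subset E_{\mathbf{V},H\setminus E}\times F_{\mathbf{y}}$ for both quivers, while the stability along arrows in $E$ cuts out, over each flag $\phi$, a linear subspace $W_{\phi,Q}\subset E_{\mathbf{V},E}$. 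The same analysis for $Q'$ produces a subspace $W_{\phi,Q'}$ of the dual space. A direct linear-algebra computation shows that $W_{\phi,Q'}$ is precisely the annihilator $W_{\phi,Q}^{\perp}$ under the duality pairing, because reversing an arrow swaps the roles of source and target in each graded piece of the flag.

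Therefore $\tilde{F}_{\mathbf{y},Q}$ and $\tilde{F}_{\mathbf{y},Q'}$ are, as subbundles of the pullback of $E_{\mathbf{V},E}$ (respectively its dual) to $\widetilde{F}_{\mathbf{y},H\setminus E}$, mutually annihilators. Applying the fiberwise Fourier--Deligne transform to the constant sheaf of $W_{\phi,Q}$ yields the constant sheaf of $W_{\phi,Q^{\prime}}$ up to a cohomological shift and Tate twist determined by $\dim W_{\phi,Q}$ (a standard computation). Since the dimensions are locally constant on $F_{\mathbf{y}}$, proper base change along $\pi_{\mathbf{y}}$ propagates this statement globally, giving $\Theta_{Q,Q'}(\mathcal{L}_{\mathbf{y},Q})\cong \mathcal{L}_{\mathbf{y},Q'}[d](d/2)$ for the appropriate $d$. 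Combined with the inversion statement above, this yields both inclusions and hence $\Theta_{Q,Q'}(\mathcal{Q}_{\mathbf{V},Q})=\mathcal{Q}_{\mathbf{V},Q'}$. The main obstacle I expect is the careful bookkeeping of the shifts and Tate twists, and verifying that the fiberwise identification $W_{\phi,Q'}=W_{\phi,Q}^{\perp}$ is compatible with the $G_{\mathbf{V}}$-equivariant structure; the latter is automatic once one checks that the duality pairing is $G_{\mathbf{V}}$-invariant, which it is because the action on the arrow space $E_{\mathbf{V},E}$ and its dual are dual actions.
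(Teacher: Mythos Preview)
Your argument is correct and is essentially the standard proof given in the cited reference \cite{Lusztig_Introduction_to_quantum_groups}; the paper itself does not supply an independent proof of this lemma but simply quotes it from Lusztig, so there is nothing further to compare.
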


%


By Lemma \ref{fD}, we have the following proposition.

\begin{proposition}
The Fourier-Deligne transform $\Theta_{Q,Q'}:\mathcal{Q}_{\mathbf{V},Q}\rightarrow\mathcal{Q}_{\mathbf{V},Q'}$ for various dimension vectors induce an isomorphism of Hopf algebras $$\Theta_{Q,Q'}:DK(\mathcal{Q})(Q)\rightarrow DK(\mathcal{Q})(Q')$$ sending $[\cL]^-\mathbf{k}_{\mu} [\cL']^+$ to $[\Theta_{Q,Q'}(\cL)]^-\mathbf{k}_{\mu} [\Theta_{Q,Q'}(\cL')]^+.$
\end{proposition}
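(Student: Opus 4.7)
The plan is to lift the categorical equivalence of Lemma~\ref{fD} to the Drinfeld double by verifying step by step that it respects every piece of structure used in the construction of $DK(\mathcal{Q})$. Since the torus $\mathbf{K}$ is intrinsic to the Cartan datum and has nothing to do with orientation, I would extend $\Theta_{Q,Q'}$ by the identity on $\mathbf{K}$, so that it sends $\mathbf{k}_\mu[\cL]^\pm$ to $\mathbf{k}_\mu[\Theta_{Q,Q'}\cL]^\pm$, and then check that the resulting $\mathcal{A}$-linear map preserves each structural operation on both halves before gluing them together.

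First I would verify that $\Theta_{Q,Q'}$ commutes with both the induction functor $\mathrm{Ind}^{\bfV}_{\bfV',\bfV''}$ and the restriction functor $\mathrm{Res}^{\bfV}_{\bfV',\bfV''}$; this is a classical compatibility of the Fourier-Deligne transform with Lusztig's induction and restriction established in \cite{Lusztig_Introduction_to_quantum_groups}. Combined with the identity action on $\mathbf{k}_\mu$ and the fact that the commutation relations (a)(2) between $\mathbf{k}_\mu$ and $[\cL]^{\pm}$ depend only on $\underline{\dim}\bfV$ (not on $Q$), this immediately yields that the induced map $\tilde K(\mathcal{Q})^{\pm}(Q)\to\tilde K(\mathcal{Q})^{\pm}(Q')$ is simultaneously an algebra and a coalgebra morphism on each side. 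The antipode formulas in (c) are expressed entirely in terms of $\mathrm{ind}$, $\mathrm{res}$ and $\mathbf{k}_{\pm\nu}$, so they are automatically preserved.

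Next, the skew-Hopf pairing $\varphi$ must be preserved, i.e. $\varphi^{Q'}(\mathbf{k}_\alpha[\Theta_{Q,Q'}\cL]^+,\mathbf{k}_\beta[\Theta_{Q,Q'}\cL']^-)=\varphi^Q(\mathbf{k}_\alpha[\cL]^+,\mathbf{k}_\beta[\cL']^-)$. The prefactor $v^{-(\alpha,\beta)-\nu(\beta)+\nu'(\alpha)}$ depends only on the grading and not on the orientation, so it suffices to show $\{\Theta_{Q,Q'}\cL,\Theta_{Q,Q'}\cL'\}=\{\cL,\cL'\}$. This is a Parseval-type identity for Fourier-Deligne: the $G_\bfV$-equivariant hypercohomology of a tensor product is invariant under the transform once the Tate twists built into the normalization of $\Theta_{Q,Q'}$ are accounted for. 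I expect this to be the main obstacle of the proof, since one must keep the cohomological shifts and Tate twists correctly balanced so that no spurious powers of $v$ appear; however, the underlying statement is available in Chapter III.12 of \cite{Kiehl_Weissauer_Weil_conjectures_perverse_sheaves_and_l'adic_Fourier_transform}.

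Given these compatibilities, the construction of the Drinfeld double is functorial with respect to isomorphisms of skew-Hopf pairings, so the two Hopf algebra isomorphisms on $\tilde K(\mathcal{Q})^{\pm}$ assemble into an isomorphism $D(\tilde K(\mathcal{Q})^+(Q),\tilde K(\mathcal{Q})^-(Q))\to D(\tilde K(\mathcal{Q})^+(Q'),\tilde K(\mathcal{Q})^-(Q'))$. Since the map acts as the identity on the common torus $\mathbf{K}$, the Hopf ideal generated by $\mathbf{k}_\mu\otimes\mathbf 1-\mathbf 1\otimes\mathbf{k}_\mu$ is sent to itself, and the map descends to the desired isomorphism $\Theta_{Q,Q'}:DK(\mathcal{Q})(Q)\to DK(\mathcal{Q})(Q')$ with the stated formula on the PBW-type basis $[\cL]^-\mathbf{k}_\mu[\cL']^+$.
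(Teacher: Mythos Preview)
Your approach is correct and in fact considerably more detailed than what the paper does. In the paper, the proposition is simply asserted to follow from Lemma~\ref{fD} with the one-line justification ``By Lemma~\ref{fD}, we have the following proposition''; no explicit verification of the compatibility with induction, restriction, the antipode, or the skew-Hopf pairing is given. Your outline supplies precisely the structural checks that the paper leaves implicit: the commutation of $\Theta_{Q,Q'}$ with $\mathrm{Ind}$ and $\mathrm{Res}$ (so that the Hopf structures on $\tilde K(\mathcal{Q})^\pm$ are preserved), the Parseval-type invariance of $\{\cL,\cL'\}$ (so that the pairing $\varphi$ is preserved), and the functoriality of the Drinfeld double construction together with the descent through the torus ideal. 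The only caveat is that the paper likely intends the reader to regard all of this as standard background from \cite{Lusztig_Introduction_to_quantum_groups} and \cite{Xiao_Xu_Zhao_Ringel_Hall_algebras_beyond_their_quantum_groups_I}, so your more explicit argument is a genuine elaboration rather than a different route.
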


\subsection{}

For any quiver $Q=(I,H,s,t)$, choose a subset $E$ of $H$ such that $i$ is a sink of $\sigma_EQ$. Consider the quiver $\sigma_i\sigma_EQ=(I,H',s,t)$ with $i$ as a source. Choose a subset $E'$ of $H'$ such that $\sigma_{E'}\sigma_i\sigma_EQ=Q$.

Define $$\hat{T}_i:DK(\mathcal{Q})(Q)\rightarrow DK(\mathcal{Q})(Q)$$
as the composition
of
$$
\xymatrix@=35pt{
K(\mathcal{Q})(Q)\ar[r]^-{\Theta_{Q,\sigma_EQ}}&K(\mathcal{Q})(\sigma_EQ)\ar[r]^-{\tilde{T}_i}&DK(\mathcal{Q})(\sigma_i\sigma_EQ)\ar[r]^-{\Theta_{\sigma_i\sigma_EQ,Q}}&K(\mathcal{Q})(Q)}
.
$$


By using Fourier-Deligne transform, Theorem \ref{main} can be rewrote as following.

\begin{theorem}
The map $\hat{T}_i:DK(\mathcal{Q})(Q)\rightarrow DK(\mathcal{Q})(Q)$ is an isomorphism of Hopf algebras satisfying that
the following diagram is commutative
$$
\xymatrix{
{DK(\mathcal{Q})(Q)}\ar[d]^{\cong}\ar[r]^{\hat{T}_i}&{DK(\mathcal{Q})(Q)}\ar[d]^{\cong}\\
{\mathbf{U}_{\mathcal{A}}}\ar[r]^{T_i}&{\mathbf{U}_{\mathcal{A}}}.}
$$
\end{theorem}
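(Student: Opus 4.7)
The plan is to reduce the statement entirely to Theorem~\ref{main} via the two intervening Fourier--Deligne transforms. First, I would observe that $\hat{T}_i$ is by construction a composition of three maps, each of which is already known to be an isomorphism of Hopf algebras: the two Fourier--Deligne transforms $\Theta_{Q,\sigma_E Q}$ and $\Theta_{\sigma_i\sigma_E Q,Q}$ by the proposition preceding this theorem, and $\tilde{T}_i$ by Theorem~\ref{main}. Hence $\hat{T}_i$ is automatically an isomorphism of Hopf algebras, and the only remaining content is the commutativity of the square with $T_i$.

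For the commutativity, the key technical input I would establish is that each Fourier--Deligne transform, when transported through the identification $\lambda_{\mathcal{A}}$, becomes the identity on $\mathbf{U}_{\mathcal{A}}$. Concretely, this reduces to the claim that $\Theta_{Q,Q'}(\mathcal{L}_{\mathbf{y},Q})\cong \mathcal{L}_{\mathbf{y},Q'}$ for every $\mathbf{y}\in Y_{\nu}$ (and that $\Theta_{Q,Q'}$ fixes $\mathbf{k}_{\mu}$, which is built into the definition of the induced map on $DK(\mathcal{Q})$). Granting this, both $[\mathcal{L}_{\mathbf{y},Q}]^{\pm}$ and $[\mathcal{L}_{\mathbf{y},Q'}]^{\pm}$ are sent to $\theta_{\mathbf{y}}^{\pm}$ under their respective $\lambda_{\mathcal{A}}$'s, and the $\mathbf{k}_{\mu}$'s are all sent to $K_{\mu}$, so the transported map is indeed $\mathrm{id}_{\mathbf{U}_{\mathcal{A}}}$.

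Having this, the argument is to paste three commutative squares horizontally: the left square (for $\Theta_{Q,\sigma_E Q}$) commutes with $\mathrm{id}_{\mathbf{U}_{\mathcal{A}}}$ on the bottom, the middle square (for $\tilde{T}_i$) commutes with $T_i$ on the bottom by Theorem~\ref{main}, and the right square (for $\Theta_{\sigma_i\sigma_E Q,Q}$) again commutes with $\mathrm{id}_{\mathbf{U}_{\mathcal{A}}}$. Composing the horizontal arrows then yields $\hat{T}_i$ on top and $T_i=\mathrm{id}\circ T_i\circ\mathrm{id}$ on the bottom, which is the desired square. Since $T_i$ is an isomorphism of Hopf algebras on $\mathbf{U}_{\mathcal{A}}$ and $\lambda_{\mathcal{A}}$ is such an isomorphism, this also gives a second proof that $\hat{T}_i$ is a Hopf algebra isomorphism.

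The one substantive step that is not a formal composition is the claim $\Theta_{Q,Q'}(\mathcal{L}_{\mathbf{y},Q})\cong \mathcal{L}_{\mathbf{y},Q'}$ (in the appropriate Grothendieck group, up to the standard shift/twist built into the definitions of $\mathcal{L}_{\mathbf{y}}$). This is the main obstacle, but it is a classical result in Lusztig's geometric framework: it follows from tracking the projection $\pi_{\mathbf{y}}:\tilde{F}_{\mathbf{y}}\rightarrow E_{\mathbf{V}}$ through the Fourier--Deligne transform, using that the flag variety $F_{\mathbf{y}}$ and the stability condition defining $\tilde{F}_{\mathbf{y}}$ are insensitive to the orientation of the arrows that are being reversed (since all but finitely many ``directions'' of the representation space are linear duals that the Fourier transform exchanges). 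I would invoke the statement from \cite{Lusztig_Introduction_to_quantum_groups} to discharge this step without redoing its proof.
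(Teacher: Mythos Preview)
Your proposal is correct and matches the paper's approach: the paper gives no explicit proof but simply states that this theorem is Theorem~\ref{main} ``rewritten'' via Fourier--Deligne transform, and you have supplied exactly the details this entails (the pasting of three squares, with the outer two corresponding to $\mathrm{id}_{\mathbf{U}_{\mathcal{A}}}$ via Lusztig's result $\Theta_{Q,Q'}(\mathcal{L}_{\mathbf{y},Q})\cong\mathcal{L}_{\mathbf{y},Q'}$). The invocation of \cite{Lusztig_Introduction_to_quantum_groups} for the behavior of $\Theta$ on the complexes $\mathcal{L}_{\mathbf{y}}$ is the right way to discharge the one nontrivial step.
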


%

Since ${T}_i:\mathbf{U}\rightarrow\mathbf{U}$ satisfies the braid group relations, the isomorphism $\hat{T}_i:DK(\mathcal{Q})(Q)\rightarrow DK(\mathcal{Q})(Q)$ also satisfies the braid group relations, that is, we have the following commutative diagrams
$$
\xymatrix@=15pt{
&{DK(\mathcal{Q})(Q)}\ar[r]^{\hat{T}_j}&{DK(\mathcal{Q})(Q)}\ar[rd]^{\hat{T}_i}&\\
{DK(\mathcal{Q})(Q)}\ar[ru]^{\hat{T}_i}\ar[rd]^{\hat{T}_j}&&&{DK(\mathcal{Q})(Q)}\\
&{DK(\mathcal{Q})(Q)}\ar[r]^{\hat{T}_i}&{DK(\mathcal{Q})(Q)}\ar[ru]^{\hat{T}_j}&}
$$
for any $i\neq j\in I$ such that $a_{ij}=-1$, and
$$
\xymatrix@=15pt{
&{DK(\mathcal{Q})(Q)}\ar[rd]^{\hat{T}_j}&\\
{DK(\mathcal{Q})(Q)}\ar[ru]^{\hat{T}_i}\ar[rd]^{\hat{T}_j}&&{DK(\mathcal{Q})(Q)}\\
&{DK(\mathcal{Q})(Q)}\ar[ru]^{\hat{T}_i}&}
$$
for any $i\neq j\in I$ such that $a_{ij}=0$.


\bibliography{mybibfile}

\end{document}